\newtheorem{Theorem}{Theorem}[section]
\newtheorem{Proposition}[Theorem]{Proposition}
\newtheorem{DefProp}[Theorem]{Definition-Proposition}
\newtheorem{Lemma}[Theorem]{Lemma}
\newtheorem{Corollary}[Theorem]{Corollary}
\theoremstyle{remark}
\newenvironment{Remark}
{\pushQED{\qed}\remark}
{\popQED\endremark}
\newenvironment{Example}
{\pushQED{\qed}\example}
{\popQED\endexample}
\newenvironment{Definition}
{\pushQED{\qed}\definition}
{\popQED\enddefinition}
\newcommand{\CC}{{\mathbb C}}
\newcommand{\NN}{{\mathbb N}}
\newcommand{\PP}{{\mathbb P}}
\newcommand{\RR}{{\mathbb R}}
\newcommand{\TT}{{\mathbb T}}
\newcommand{\ZZ}{{\mathbb Z}}
\newcommand{\bOne}{\mathbbm{1}}
\newcommand{\calA}{{\mathcal A}}
\newcommand{\calE}{{\mathcal E}}
\newcommand{\calF}{{\mathcal F}}
\newcommand{\calG}{{\mathcal G}}
\newcommand{\calH}{{\mathcal H}}
\newcommand{\calN}{{\mathcal N}}
\newcommand{\calO}{{\mathcal O}}
\newcommand{\calV}{{\mathcal V}}
\newcommand{\Nvert}{N_{{\rm vert}}}
\newcommand{\Ndisc}{N_{{\rm disc}}}
\newcommand{\hGamma}{\widehat{\Gamma}}
\newcommand{\Ad}{{\text{\rm Ad}}}
\newcommand{\area}{{\rm area}}
\newcommand{\bfzero}{{\bf  0}}
\newcommand\codim{{\rm codim}}
\newcommand{\conv}{{\text{\rm conv}}}
\newcommand\cpdeg{{\rm cpdeg}}
\newcommand{\BV}{{\textit{BV}}\hspace{0.5pt}}
\newcommand{\Hom}{{\text{\rm Hom}}}
\newcommand{\ini}{\text{\rm in}}
\newcommand{\MA}{{\text{\rm MA}}}
\newcommand{\sgn}{\text{\rm sgn}}
\newcommand\vol{{\rm vol}}
\newcommand\nvol{{\rm nvol}}
\newcommand\rank{{\rm rank}}
\newcommand{\Sat}{{\text{\rm Sat}}}
\newcommand{\Var}{{\textit{Var}}\hspace{0.5pt}}
\newcommand\wt{{\rm wt}}
\newcommand{\hooklongrightarrow}{\lhook\joinrel\longrightarrow}
\newcommand{\longtwoheadrightarrow}{\relbar\joinrel\twoheadrightarrow}
\newcommand{\defcolor}[1]{{\color{blue}#1}}
\newcommand{\demph}[1]{\defcolor{{\sl #1}}}
\definecolor{TAMU}{RGB}{140,0,0}
\definecolor{myblue}{RGB}{0,0,198}
\definecolor{myred}{RGB}{182,0,0}
\title{The Critical Point Degree of a Periodic Graph}
\author{M.~Faust}
\address{Matthew Faust, Department of Mathematics,
         Michigan State University, East Lansing, MI 48832, USA}
\email{mfaust@msu.edu}
\urladdr{https://mattfaust.github.io/}
\author{J.~Robinson}
\address{Jonah Robinson, Department of Mathematics,
         University of Michigan, Ann Arbor, MI 48109,  USA}
\email{jonahrob@umich.edu}
\urladdr{https://jonah-robinson.github.io/}
\author{F. Sottile}
\address{Frank Sottile, Department of Mathematics,
         Texas A\&M University, College Station, Texas 77843,  USA}
\email{sottile@tamu.edu}
\urladdr{https://franksottile.github.io}
\thanks{Research supported in part by NSF grants DMS-2052519, DMS-2052572, DMS-2201005, and Simons grant TSM-00014009}
\subjclass[2020]{14M25, 47A75, 81Q10, 52B20, 05C50}
\keywords{Bloch Variety,Toric Variety,  Dispersion Polynomial,  Newton Polytope}
\begin{document}

\begin{abstract}
 The critical point degree of a periodic graph operator is the number of critical points of its complex Bloch variety.
 Determining it is a step towards the spectral edges conjecture and more generally understanding Bloch varieties.
 Previous work showed that it is bounded above by the volume of the Newton polytope of the graph, and that the inequality is 
 strict when there are asymptotic critical points.
 We identify contributions from asymptotic critical points that arise from the structure of the graph,
 and show that the critical point degree is bounded above by the difference of the volume of the Newton polytope and these
 contributions.
 These results have implications for nonlinear optimization.
\end{abstract}

\maketitle 

%%%%%%%%%%%%%%%%%%%%%%%%%%%%%%%%%%%%%%%%%%%%%%%%%%%%%%%%%%%%%%%%%%%%%%%%%%%%%%%%%%%%%%%%%%%%%%%%%%%%
\section*{Introduction}
Operators on $\ZZ^d$-periodic graphs are an abstraction of physical models of crystals.
The spectrum of such an operator is a union of intervals in $\RR$, which Floquet theory reveals to be the
image of the Bloch variety of the operator under a coordinate projection.
The Bloch variety may be understood as an algebraic hypersurface in $(S^1)^d\times\RR$ and this coordinate projection to
$\RR$ is a function $\lambda$ whose critical values include the edges of the spectral intervals.

The complexification of the Bloch variety is an algebraic hypersurface in $(\CC^\times)^d\times\CC$ defined by the
dispersion polynomial. 
The \demph{critical point degree} of the operator is the number of complex critical points of the function $\lambda$
on the Bloch variety, counted with multiplicity. 
Results of~\cite{FS} show that the critical point degree is bounded above by the normalized volume $\nvol(A)$ of the
Newton polytope $A$ of the dispersion polynomial, and   
this bound is not attained if the Newton polytope has vertical faces or if the Bloch variety has asymptotic singularities.
This is reviewed in Section~\ref{S:CPDPO}.

We improve that bound.
Corollary~\ref{C:verticalContribution} identifies a contribution \defcolor{$\Nvert$} from vertical faces of $A$.
This is asymptotic behavior as $\lambda$ remains bounded, but the Floquet multipliers become unbounded.
In  Section~\ref{S:initGraph} we identify a structure of the graph (disconnected initial graph) which implies that the
Bloch variety has asymptotic singularities.
Corollary~\ref{C:obliqueContribution} identifies their contribution,  \defcolor{$\Ndisc$}.
For these, both $\lambda$ and the Floquet multipliers become unbounded.
Both contributions $\Nvert$ and $\Ndisc$ arise from structural aspects of the underlying graph,
which we study in Section~\ref{S:initGraph}.
We state a simplified version of our main theorem.\medskip

\noindent{\bf Theorem~\ref{Th:main}.}
{\it For $d=1$, $2$, or $3$, the critical point degree of a Bloch variety of an operator on a $\ZZ^d$-periodic graph
   is at most $\nvol(A)-\Nvert-\Ndisc$.}\medskip

This restriction to $d\leq 3$ is because the singular locus of a complex Bloch variety often has dimension at least $d-3$.
(See Remark~\ref{R:SingularLocus}.)

Studying critical points of the function $\lambda$ on the Bloch variety is a topic of current
interest~\cite{Berk,DKS,FS,FS25,Liu22}. 
One motivation is the \demph{spectral edges conjecture} from mathematical physics, which concerns the structure
of the Bloch variety above the edges of the spectral bands.
It posits that for a sufficiently general operator $H$, the local extrema of the function $\lambda$ on the real Bloch variety
are nondegenerate in that their Hessians are nondegenerate quadratic forms, and they occur in distinct bands.
This is stated more precisely in~\cite[Conj.\ 5.25]{KuchBAMS}, and it also appears
in~\cite{CdV,fk2, KuchBook,Nov81,Nov83}. 
Important notions, such as effective mass in solid state physics, the Liouville property, Green's
function asymptotics, Anderson localization, homogenization, and many other assumed properties in physics, depend upon this
conjecture. 
This conjecture and some recent progress is discussed in~\cite{DKS, FS,FS25}.
Determining the critical point degree for a general operator was important to those results.

This work is related to nonlinear optimization.
The results in~\cite{DKS,FS} were among the first to show that well-known combinatorial bounds (BKK bound~\cite{BKK}
for general sparse systems) for
the number of solutions to certain polynomial optimization problems are attained, despite the corresponding systems of
polynomials being far from general.
This directly inspired~\cite{BSW} and subsequent works~\cite{LT,LMR,LNRW,TT24}, which  have shown that this phenomenon
is quite common.
This is related to recent work~\cite{E24,KKS,Sel25} refining the BKK bound for structured polynomial systems.
The structural understanding of the critical point degree that we derive here may indicate the next steps in this program.

Section~\ref{S:one} provides some background on periodic graph operators and their Bloch varieties.
In Section~\ref{S:initGraph} we study asymptotic properties of the dispersion polynomial and define initial graphs.
Section~\ref{S:toric} develops technical aspects of projective toric varieties, including local properties and singularities,
and we prove Theorem~\ref{Th:main}, using results from the subsequent two sections.
Section~\ref{S:vertical} studies the contribution $\Nvert$ from vertical faces.
Finally, in Section~\ref{S:Structural} we study the contribution $\Ndisc$ of asymptotic singularities that arise when
an initial graph of $\Gamma$ is disconnected.
We end with Example~\ref{Ex:singularity} suggesting further refinements to our analysis of asymptotic critical points
using the structure of the graph. 
This approach of understanding the asymptotic behavior of the Bloch variety/operator has long been used to study
operators on periodic graphs~\cite{Baettig,FLiu,FL-G,FaustKachkovskiy,FLM22,flm23,fk2,GKT,Liu22}. 

We derive another result of interest.
Theorem~\ref{Th:InitialGraphForm} describes restrictions of the
dispersion polynomial to faces of its Newton polytope in terms of initial subgraphs of $\Gamma$.
This refines~\cite[Lem.~4.2]{FLiu} and will be used in~\cite{FLSS}.

%%%%%%%%%%%%%%%%%%%%%%%%%%%%%%%%%%%%%%%%%%%%%%%%%%%%%%%%%%%%%%%%%%%%%%%%%%%%%%%%%%%%%%%%%%%%%%%%%%%%
\section{Algebraic Aspects of Discrete Periodic Operators}\label{S:one}

Let $n$ and $d$ be positive integers.
Let $\ZZ$, $\RR$, $\CC$, $\TT$ be, respectively, the integers, the real numbers, the complex numbers,
and the unit complex numbers. 
Unless otherwise noted, a graph will be simple (no loops or multiple edges),
undirected, and have bounded vertex degree.
A more comprehensive development is found in~\cite[Ch.~4]{BerkKuch} and~\cite{AAPGO}.

%%%%%%%%%%%%%%%%%%%%%%%%%%%%%%%%%%%%%%%%%%%%%%%%%%%%%%%%%%%%%%%%%%%%%%%%%%%%%%%%%%%%%%%%%%%%%%%%%%%%
\subsection{Discrete Periodic Operators}\label{S:DPO}
A $\ZZ^d$-periodic graph \defcolor{$\Gamma$} is a graph equipped with a free cocompact action of $\ZZ^d$.
Cocompactness means there are finitely many orbits of vertices  \defcolor{$\calV(\Gamma)$} and of  edges \defcolor{$\calE(\Gamma)$}.
An edge between vertices $u$ and $v$ is written $(u,v)$; we have $(u,v)=(v,u)$, as $\Gamma$ is undirected,
and when $(u,v)\in\calE(\Gamma)$ we may write $u\defcolor{\sim}v$.
Figure~\ref{F:three_graphs} shows three $\ZZ^2$-periodic graphs.
%%%%%%%%%%%%%%%%%%%%%%%%%%%%%%%%%%%%%%%%%%%%%%%%%%%%%%%%%%%%%%%%%%%%%%%%%%%%%%%%%%%%%%%%%%%%%%%%%%%%
\begin{figure}[htb]
  \centering
  \raisebox{-40pt}{\includegraphics[height=90pt]{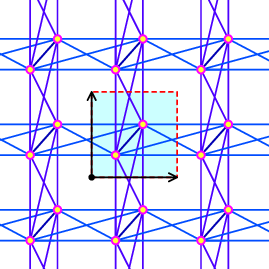}} \quad\qquad
  \raisebox{-45pt}{\includegraphics[height=100pt]{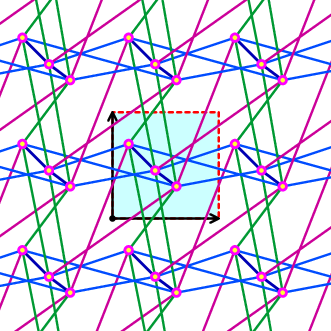}} \quad\qquad
  \raisebox{-30pt}{\includegraphics[height=70pt]{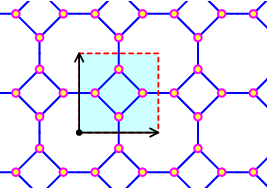}}
 \caption{Three periodic graphs.}\label{F:three_graphs}
\end{figure}
%%%%%%%%%%%%%%%%%%%%%%%%%%%%%%%%%%%%%%%%%%%%%%%%%%%%%%%%%%%%%%%%%%%%%%%%%%%%%%%%%%%%%%%%%%%%%%%%%%%%  
  %
 These have 2, 3, and 4  orbits of $\ZZ^2$ on $\calV(\Gamma)$, respectively.

A periodic graph $\Gamma$ is a discrete model of a crystal whose vertices represent atoms and edges
represent interactions among atoms.
Interaction strengths are modeled by a $\ZZ^d$-periodic function $\defcolor{e}\colon\calE(\Gamma)\to\RR$ called edge weights, and we fix a
$\ZZ^d$-periodic function $\defcolor{V}\colon\calV(\Gamma)\to\RR$, a \demph{potential}.
The tight-binding model~\cite{AshcroftMermin,Kittel} involves the discrete Schr\"odinger operator \defcolor{$H$} acting on
functions $f\colon\calV(\Gamma)\to\CC$.
This is the sum of a multiplication operator and a weighted graph Laplacian.
The function $Hf$ has value at $v\in\calV(\Gamma)$,
\begin{equation}\label{Eq:DPSO}
  (Hf)(v)\ =\  V(v)\cdot f(v)\ +\ \sum_{v\sim u} e_{(v,u)}(f(v)-f(u))\,.
\end{equation}
Let \defcolor{$\ell_2(\Gamma)$} be the Hilbert space of square summable functions $f\colon\calV(\Gamma)\to\CC$.
Then $H$ is a bounded self-adjoint operator on $\ell_2(\Gamma)$ whose spectrum $\sigma(H)$ is a finite union of closed and bounded intervals in
$\RR$, giving the familiar structure of energy bands and band gaps.

%%%%%%%%%%%%%%%%%%%%%%%%%%%%%%%%%%%%%%%%%%%%%%%%%%%%%%%%%%%%%%%%%%%%%%%%%%%%%%%%%%%%%%%%%%%%%%%%%%%%
\begin{Remark}
  For  $v\in\calV(\Gamma)$ we absorb the sum $\sum_{v\sim u} e_{(v,u)}$ into $V(v)$.
  Then~\eqref{Eq:DPSO} becomes
 \begin{equation}\label{Eq:reducedSum}
    (Hf)(v)\ =\  V(v)\cdot f(v)\ -\ \sum_{v\sim u} e_{(v,u)}f(u)\,. \qedhere %eqno{\diamond}
 \end{equation}
\end{Remark}
%%%%%%%%%%%%%%%%%%%%%%%%%%%%%%%%%%%%%%%%%%%%%%%%%%%%%%%%%%%%%%%%%%%%%%%%%%%%%%%%%%%%%%%%%%%%%%%%%%%%

More structure of the spectrum is revealed by Floquet theory~\cite[Ch.~4]{BerkKuch},~\cite[Sect.~1.3]{KorotyaevSabruova14},
or~\cite{KuchBook}. 
As $V$ and $e$ are $\ZZ^d$-periodic, the operator $H$ commutes with the $\ZZ^d$-action.
The points $z=(z_1,\dotsc,z_d)\in\TT^d$ are unitary characters of $\ZZ^d$ under the map
$(z,\alpha)\mapsto \defcolor{z^\alpha}\vcentcolon= z_1^{\alpha_1}\dotsb z_d^{\alpha_d}\in\TT$.
Let us consider the Fourier transform of functions in $\ell_2(\Gamma)$.
Let \defcolor{$L^2(\TT^d)$} be the Hilbert space of square-integrable functions on $\TT^d$.
Writing  \defcolor{$\alpha+v$} for the action of $\alpha\in\ZZ^d$ on a vertex
$v\in\calV(\Gamma)$, the Fourier transform of  $f\colon\calV(\Gamma)\to\CC$ is a function
$\defcolor{\hat{f}}\colon\calV(\Gamma)\to L^2(\TT^d)$ that is quasi-periodic as follows.
For $v\in\calV(\Gamma)$ and $\alpha\in\ZZ^d$,
 \begin{equation}\label{Eq:quasi-periodic}
   \hat{f}(\alpha+v)\ =\ z^\alpha\cdot\hat{f}(v)\ ,
 \end{equation}
as functions of the \demph{Floquet multipliers} $z=(z_1,\dotsc,z_d)\in\TT^d$.
Note that $z=e^{\sqrt{-1}\, k}$, where $k\in\RR^d$ are quasimomenta.

By~\eqref{Eq:quasi-periodic}, a quasiperiodic function $\hat{f}$ is determined by its values at one vertex in each
$\ZZ^d$-orbit on $\calV(\Gamma)$.
Let $\defcolor{W}\subset\calV(\Gamma)$ be such a choice of orbit representatives, called a \demph{fundamental domain}.
These are highlighted for  the graphs in Figure~\ref{F:three_graphs}. % ~\eqref{Eq:Graphs_sample}.
Then the Fourier transform is an isometry $\ell_2(\Gamma)\xrightarrow{\,\sim\,} L^2(\TT^d)^W$, where $L^2(\TT^d)^W$ is
the space of functions from
$W$ to $L^2(\TT^d)$. 

As $H$ commutes with the $\ZZ^d$-action on $\Gamma$, for $\hat{f}\colon W\to L^2(\TT^d)$, the value of $H\hat{f}$ at a
vertex $v\in W$ is (using~\eqref{Eq:DPSO} and~\eqref{Eq:quasi-periodic}),
 \begin{equation}\label{Eq:Floquet_operator}
   \bigl( H \hat{f} \bigr)(v)\ =\  V(v)\cdot \hat{f}(v)\ - \ 
   \sum_{v\sim \alpha+u} e_{(v,\alpha+u)}\, z^\alpha\hat{f}(u)\ .
 \end{equation}
 %

%%%%%%%%%%%%%%%%%%%%%%%%%%%%%%%%%%%%%%%%%%%%%%%%%%%%%%%%%%%%%%%%%%%%%%%%%%%%%%%%%%%%%%%%%%%%%%%%%%%%
\begin{Example}\label{Ex:Hexagon_operator}
  Let  $\Gamma$ be the hexagonal lattice shown in Figure~\ref{F:localHexagon} with 
  a labeling in a neighborhood of its fundamental domain.
 %%%%%%%%%%%%%%%%%%%%%%%%%%%%%%%%%%%%%%%%%%%%%%%%%%%%%%%%%%%%%%%%%%%%%%%%%%%%%%%%%%%%%%%%%%%%%%%%%%%%%
  \begin{figure}[htb]
    \centering
   \begin{picture}(157,110)(0,-5)
     \put(0,0){\includegraphics{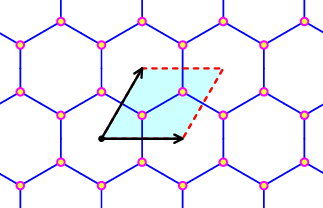}}
     \put(71,54){\small$W$}
   \end{picture}
    \qquad
   \begin{picture}(179,110)(-41,-9)
     \put(0,0){\includegraphics{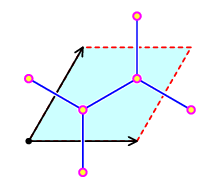}}
     \put(31,31){\small$u$}   \put( 95,31){\small$(1,0)+u$}    \put(45,94){\small$(0,1)+u$}
     \put(69,53){\small$v$}   \put(-41,54){\small$(-1,0)+v$}   \put(15,-9){\small$(0,-1)+v$}
     \put(54,15){\small$x$}
     \put(28,62){\small$y$}
     \put(52,39){\small$a$}
     \put(32,14){\small$c$}   \put(68,73){\small$c$}
     \put(14,40){\small$b$}   \put(86,44){\small$b$}
   \end{picture}
   \caption{The hexagonal lattice and a labeling in a neighborhood of $W$.}
   \label{F:localHexagon}
 \end{figure}
 %%%%%%%%%%%%%%%%%%%%%%%%%%%%%%%%%%%%%%%%%%%%%%%%%%%%%%%%%%%%%%%%%%%%%%%%%%%%%%%%%%%%%%%%%%%%%%%%%%%%%
  Thus, $W=\{u,v\}$ consists of two vertices and there are three (orbits of) edges, with labels $a,b,c$.
  Let $(x,y)\in\TT^2$.
  The operator $H$ is
 \begin{align*}
   (H\hat{f})(u)\  &=\ V(u)\hat{f}(u)\ -\ a\hat{f}(v)\ -\
                          bx^{-1}\hat{f}(v)\ -\ cy^{-1}\hat{f}(v)\ ,\\
   (H\hat{f})(v)\  &=\ V(v)\hat{f}(v)\ -\ a\hat{f}(u)\ -\
                         bx\hat{f}(u)\ \ \ \  -\  cy\hat{f}(u)\ .
 \end{align*}
 Collecting coefficients of $\hat{f}(u)$ and $\hat{f}(v)$, we represent $H$ by the $2\times 2$-matrix,
 \begin{equation}\label{Eq:Hexagon_Matrix}
    H\ =\ H(x,y)\ =\ \left( \begin{matrix}
          V(u)       &-a-b x^{-1}-c y^{-1}\\
         -a-b x-c y  & V(v)
              \end{matrix}\right)\ ,
 \end{equation}
 whose entries are Laurent polynomials in $x,y$.
 For $(x,y)\in\TT^2$, $(\overline{x},\overline{y})=(x^{-1},y^{-1})$.
 Then $H^T=\overline{H}$, so that $H$ is Hermitian.
\end{Example}
%%%%%%%%%%%%%%%%%%%%%%%%%%%%%%%%%%%%%%%%%%%%%%%%%%%%%%%%%%%%%%%%%%%%%%%%%%%%%%%%%%%%%%%%%%%%%%%%%%%%

The \demph{Floquet matrix $H(z)$} is the $W\times W$ matrix of Laurent polynomials in $z$ whose rows and columns are
indexed by elements of $W$, and whose entry in row $v$ and column $u$ is
  \begin{equation}
   \label{Eq:Floquet_Matrix}
  \delta_{v,u} V(v) \ -\ \sum_{v\sim\alpha+u}  e_{(v,\alpha+u)}\, z^\alpha\,.
 \end{equation}
({\it cf}.\ the right hand side of~\eqref{Eq:Floquet_operator}). 
The operator $H$ acts on $L^2(\TT^d)^W$ via multiplication by the Floquet matrix. 
Write $c=(e,V)$ for the edge weights and potential, and \defcolor{$H_c(z)$} for the Floquet matrix 
when needed to indicate the parameters.
As $e$ is $\ZZ^d$-periodic and the edges are undirected, $e_{(v,\alpha+u)}=e_{(-\alpha+v,u)}=e_{(u,-\alpha+v)}$, which
implies the symmetry, $H(z)^T=H(z^{-1})$.

%%%%%%%%%%%%%%%%%%%%%%%%%%%%%%%%%%%%%%%%%%%%%%%%%%%%%%%%%%%%%%%%%%%%%%%%%%%%%%%%%%%%%%%%%%%%%%%%%%%%
\begin{Remark}\label{Rem:Sparse}
  The potential $V(v)$ only occurs in the diagonal entry in position $(v,v)$ .
  The edge parameter $e_{(v,\alpha+u)}=e_{(u,-\alpha+v)}$ only occurs in positions $(v,u)$ and $(u,v)$
  with the term $e_{(v,\alpha+u)}\, z^\alpha$ in position $(v,u)$ and 
  $e_{(v,\alpha+u)}\, z^{-\alpha}$ in position $(u,v)$.  
\end{Remark}  
%%%%%%%%%%%%%%%%%%%%%%%%%%%%%%%%%%%%%%%%%%%%%%%%%%%%%%%%%%%%%%%%%%%%%%%%%%%%%%%%%%%%%%%%%%%%%%%%%%%%

%%%%%%%%%%%%%%%%%%%%%%%%%%%%%%%%%%%%%%%%%%%%%%%%%%%%%%%%%%%%%%%%%%%%%%%%%%%%%%%%%%%%%%%%%%%%%%%%%%%%
\subsection{Bloch and Fermi Varieties}\label{S:BLoch}
The \demph{dispersion polynomial} $\Phi(z,\lambda)=\Phi_c(z,\lambda)$ is the characteristic polynomial of the Floquet
matrix $H(z)$, 
 \begin{equation}\label{Eq:dispersionPolynomial}
   \defcolor{\Phi_{c}(z,\lambda)}\ \vcentcolon=\ 
   \det\left(\lambda {\textit Id}_W\ -\ H_{c}(z)\right)\,.
 \end{equation}
This is a monic polynomial in $\lambda$ of degree $|W|$ whose coefficients are Laurent polynomials in $z$.
The dispersion polynomial of the Floquet matrix $H(x,y)$ of~\eqref{Eq:Hexagon_Matrix} is
 \begin{multline}
   \qquad \lambda^2\ -\lambda\bigl(V(u)+V(v)\bigr)  +  V(u)V(v) - (a^2+b^2+c^2) \label{Eq:DispersionPOlynomiaHexagonalLattice}\\
   -\ ab(x+x^{-1})\ -\ ac(y+y^{-1})\ -\ bc(xy^{-1} + yx^{-1})\ . \qquad
 \end{multline}

The \demph{Bloch variety} $\BV=\BV_{c}\subset\TT^d\times\RR$ is defined by the dispersion polynomial
 \[
 \defcolor{\BV_{c}}\ =\ \Var(\Phi_{c})\ :=\
 \{(z,\lambda)\in \TT^d\times\RR \mid \Phi_{(e,V)}(z,\lambda)\ =\ 0\}\,. 
 \]
(We write \defcolor{$\Var(f)$} for the vanishing set of a function $f$.)
Figure~\ref{F:HexagonalAndBloch} shows three Bloch varieties for the hexagonal lattice from Figure~\ref{F:localHexagon}.
%%%%%%%%%%%%%%%%%%%%%%%%%%%%%%%%%%%%%%%%%%%%%%%%%%%%%%%%%%%%%%%%%%%%%%%%%%%%%%%%%%%%%%%%%%%%%%%%%%%%
\begin{figure}[htb]
  \centering
  \begin{picture}(115,110)
    \put(0,0){\includegraphics[height=110pt]{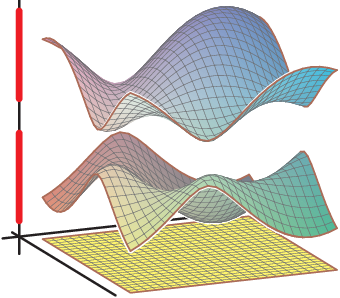}}
  \end{picture}
  \qquad\quad
  \begin{picture}(115,110)
    \put(0,0){\includegraphics[height=110pt]{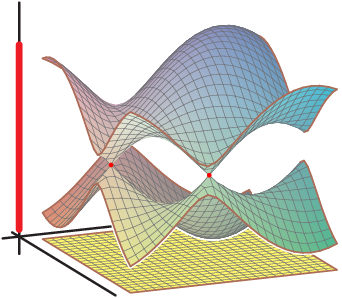}}
  \end{picture}
  \qquad\quad
  \begin{picture}(115,110)
        \put(0,0){\includegraphics[height=110pt]{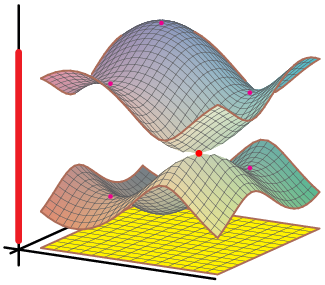}}
  \end{picture}

    \caption{Three  Bloch varieties for the hexagonal lattice}
  \label{F:HexagonalAndBloch}
\end{figure}
%%%%%%%%%%%%%%%%%%%%%%%%%%%%%%%%%%%%%%%%%%%%%%%%%%%%%%%%%%%%%%%%%%%%%%%%%%%%%%%%%%%%%%%%%%%%%%%%%%%%
The parameters $(a,b,c,u,v)$ of each are $(1,1,1,1,0)$, $(1,1,1,0,0)$, and $(5,3,2,0,0)$, respectively.
These are displayed over a square representing the fundamental domain $[-\frac{\pi}{2},\frac{3\pi}{2}]^2$ of
$\TT^2=\RR^2/(2\pi\ZZ)^2$, and the spectra $\sigma(H)$ are the thickened intervals along the vertical axes.
Each Bloch variety is a 2-sheeted cover of $\TT^2$.

For a given $z\in\TT^d$, the points $\lambda$ such that $(z,\lambda)\in\BV$ are the eigenvalues of the matrix $H(z)$.
Since $z^{-1}=\overline{z}$ (the complex conjugate of $z$) and $H(z)^T=H(z^{-1})$, the Floquet matrix $H(z)$ is Hermitian.
Consequently, it has $|W|$ real eigenvalues, which implies that the Bloch variety
$\BV\subset\TT^d\times\RR$ is a $|W|$-sheeted cover of $\TT^d$ (counting multiplicities of eigenvalues).
Figure~\ref{F:more_BV} shows Bloch varieties corresponding to the graphs of Figure~\ref{F:three_graphs}. %~\eqref{Eq:Graphs_sample}.
%%%%%%%%%%%%%%%%%%%%%%%%%%%%%%%%%%%%%%%%%%%%%%%%%%%%%%%%%%%%%%%%%%%%%%%%%%%%%%%%%%%%%%%%%%%%%%%%%%%%
\begin{figure}[htb]
  \centering
   \raisebox{-50pt}{\includegraphics[height=100pt]{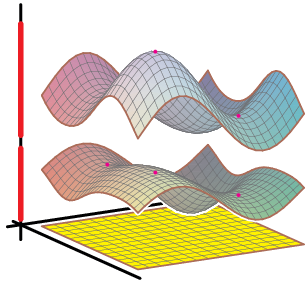}}\qquad        
   \raisebox{-55pt}{\includegraphics[height=140pt]{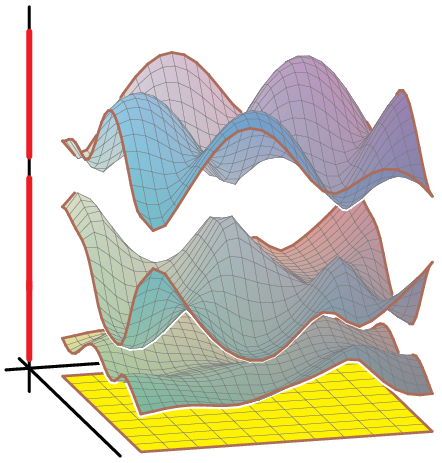}}\qquad 
   \raisebox{-60pt}{\includegraphics[height=130pt]{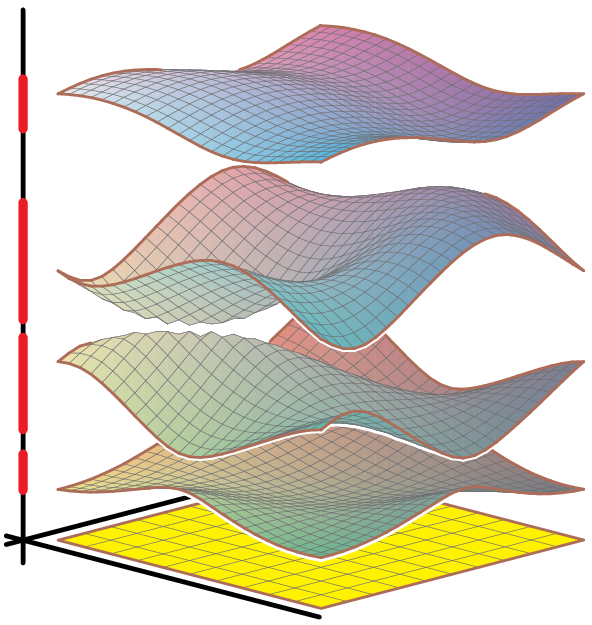}}          

    \caption{Three more Bloch varieties}
  \label{F:more_BV}
\end{figure}
%%%%%%%%%%%%%%%%%%%%%%%%%%%%%%%%%%%%%%%%%%%%%%%%%%%%%%%%%%%%%%%%%%%%%%%%%%%%%%%%%%%%%%%%%%%%%%%%%%%%
The $j$th sheet is the graph of the $j$th \demph{band function} $\lambda_j\colon\TT^d\to\RR$, whose image is a
spectral band of $H$.
Rather than treating these individually, we instead (see~\cite{DKS,FS,AAPGO}) consider the restriction of the coordinate function
$\lambda$ to the Bloch variety, which is a function $\lambda\colon \BV\to\RR$.
Then the spectrum of $H$ is the image of the Bloch variety under this function $\lambda$.
The spectra in Figure~\ref{F:more_BV} are the thickened intervals along the vertical axes.

The endpoints of the intervals are the \demph{spectral edges} of $H$.
These are images of local extrema of the function $\lambda$ on the Bloch variety.
The \demph{spectral edges conjecture}~\cite[Conj.\ 5.25]{KuchBAMS} posits, among other things,
that the Hessian of $\lambda$ is a nondegenerate quadratic form at each extremum.
This is implied by the critical points conjecture~\cite{FS}: all critical points of $\lambda$ are isolated and multiplicity 1 (and consequently nonsingular).
By Lemma~5.1 of~\cite{FS}, a nonsingular critical point has a nondegenerate Hessian.

%%%%%%%%%%%%%%%%%%%%%%%%%%%%%%%%%%%%%%%%%%%%%%%%%%%%%%%%%%%%%%%%%%%%%%%%%%%%%%%%%%%%%%%%%%%%%%%%%%%%
\begin{Remark}\label{R:SingularLocus}
  The singular locus of the determinant hypersurface has codimension 4 in $\textrm{Mat}_{n\times n}$.
  Thus, we expect that the singular locus of the Bloch variety has dimension at least $d{+}1-4=d{-}3$.
  When $d\geq 4$, this is positive, giving infinitely many critical points.
  As singular points are critical points, counting critical points often becomes moot when $d\geq 4$.
  The paper~\cite{FS25} concerns cases when $d>3$ and yet the critical points are discrete.
\end{Remark}
%%%%%%%%%%%%%%%%%%%%%%%%%%%%%%%%%%%%%%%%%%%%%%%%%%%%%%%%%%%%%%%%%%%%%%%%%%%%%%%%%%%%%%%%%%%%%%%%%%%%

%%%%%%%%%%%%%%%%%%%%%%%%%%%%%%%%%%%%%%%%%%%%%%%%%%%%%%%%%%%%%%%%%%%%%%%%%%%%%%%%%%%%%%%%%%%%%%%%%%%%

If we write the dispersion polynomial $\Phi(z,\lambda)$~\eqref{Eq:dispersionPolynomial} as a linear combination of monomials,
 \begin{equation}\label{Eq:DispersionPolynomial}
   \Phi(z,\lambda)\ =\ \sum_{(\alpha,j)\in\ZZ^d\times\NN} c_{(\alpha,j)} z^\alpha \lambda^j\ ,
 \end{equation}
the set \defcolor{$\calA(\Phi)$} of exponent vectors $(\alpha,j)$ with non-zero coefficients $c_{(\alpha,j)}$ is its \demph{support}.
For general parameters $V,e$, the origin $\bfzero$ lies in $\calA(\Phi)$ as the product of the potentials occurs in $\Phi$.
The dispersion polynomial for the hexagonal lattice has support the columns of the matrix
\[
   \left(\begin{matrix}  0&0& 0& 1& 1& 0&-1&-1& 0\\
                         0&0& 1& 0&-1&-1& 0& 1& 0\\
                         2&1& 0& 0& 0& 0& 0& 0& 0\end{matrix}\right)\ .
\]

The convex hull $\defcolor{\calN(\Phi)}=\conv(\calA(\Phi))$ of the support is the \demph{Newton polytope} of $\Phi$.
Figure~\ref{F:moreNewtonPolytopes} shows
Newton polytopes for dispersion polynomials from the hexagonal lattice and the graphs
of Figure~\ref{F:three_graphs}. % ~\eqref{Eq:Graphs_sample}.
All are symmetric with respect to the vertical axis as $\Phi(z,\lambda)=\Phi(z^{-1},\lambda)$,
which follows from $H(z)^T=H(z^{-1})$.
%%%%%%%%%%%%%%%%%%%%%%%%%%%%%%%%%%%%%%%%%%%%%%%%%%%%%%%%%%%%%%%%%%%%%%%%%%%%%%%%%%%%%%%%%%%%%%%%%%%%
\begin{figure}[htb] \centering
   \raisebox{-50pt}{\includegraphics[height=70pt]{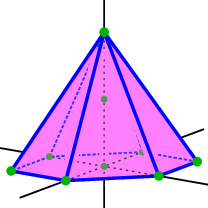}} \quad 
   \raisebox{-52.45pt}{\includegraphics[height=70pt]{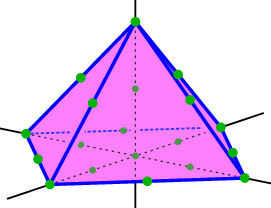}} \quad
   \raisebox{-59.8pt}{\includegraphics[height=105pt]{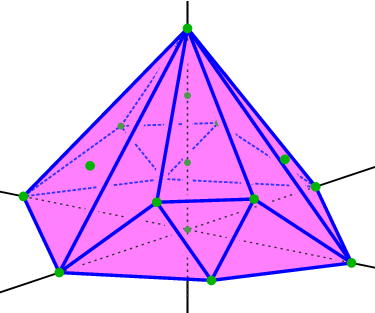}}\quad
   \raisebox{-50pt}{\includegraphics[height=112pt]{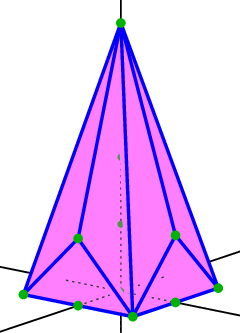}} 
\caption{Some Newton polytopes}\label{F:moreNewtonPolytopes}
\end{figure}  
%

%%%%%%%%%%%%%%%%%%%%%%%%%%%%%%%%%%%%%%%%%%%%%%%%%%%%%%%%%%%%%%%%%%%%%%%%%%%%%%%%%%%%%%%%%%%%%%%%%%%%
\subsection{Critical Points of Discrete Periodic Operators}\label{S:CPDPO}

As the Bloch variety is defined by a polynomial $\Phi$ with real coordinates, it is natural to consider its
complexification. 
This is the set of points $(z,\lambda)$ in $(\CC^\times)^d\times\CC$ such that $\Phi(z,\lambda)=0$.
This complexified Bloch variety is the Zariski closure of the real Bloch variety defined in Section~\ref{S:BLoch}.

A goal of~\cite{DKS,FS} was to give bounds for the number of critical points and use that to prove the critical points
conjecture for some periodic graphs.
Our goal is to refine those bounds.
We begin with  equations that define the set of critical points of
$\lambda$ on the Bloch variety.

%%%%%%%%%%%%%%%%%%%%%%%%%%%%%%%%%%%%%%%%%%%%%%%%%%%%%%%%%%%%%%%%%%%%%%%%%%%%%%%%%%%%%%%%%%%%%%%%%%%%
\begin{Proposition}[Prop.~2.1 of~\cite{FS}]
  A point $(z,\lambda)\in(\CC^\times)^d\times \CC$ is a critical point of the function $\lambda$ on the Bloch variety 
  if and only if it is a solution to the system of equations
 \begin{equation}\label{Eq:CPE}
   \Phi(z,\lambda)
   \ =\ z_1\frac{\partial \Phi(z,\lambda)}{\partial z_1}
   \ =\ z_2\frac{\partial \Phi(z,\lambda)}{\partial z_2}
   \ =\ \ \dotsb\ 
   \ =\ z_d\frac{\partial \Phi(z,\lambda)}{\partial z_d}
   \ =\ 0\,.
 \end{equation}
\end{Proposition}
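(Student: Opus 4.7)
The plan is to characterize critical points of $\lambda\colon\BV\to\CC$ via the Lagrange multiplier / implicit function theorem approach in the ambient space $(\CC^\times)^d\times\CC$, then rewrite the resulting condition using the logarithmic derivatives $z_i\partial/\partial z_i$ that are natural on the torus.

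First, I would recall the standard characterization: a point $(z,\lambda)\in\BV$ is a critical point of the function $\lambda\vert_{\BV}$ precisely when $d\lambda$ restricted to the Zariski tangent space $T_{(z,\lambda)}\BV$ vanishes. Since $\BV=\Var(\Phi)$ is a hypersurface, the Zariski tangent space at $(z,\lambda)$ is the kernel of $d\Phi$ when $d\Phi\neq 0$, and is the entire ambient tangent space when $d\Phi=0$ (i.e.\ at a singular point of $\BV$, which is by convention a critical point of any regular function). Both cases are captured uniformly by the statement that $d\lambda$ is a $\CC$-multiple of $d\Phi$ at $(z,\lambda)$, allowing the multiplier to be zero.

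Next, I would compute in coordinates. In the basis $(dz_1,\dotsc,dz_d,d\lambda)$ on the ambient cotangent space, $d\lambda$ has components $(0,\dotsc,0,1)$, whereas
\[
  d\Phi\ =\ \sum_{i=1}^d \frac{\partial\Phi}{\partial z_i}\,dz_i\ +\ \frac{\partial\Phi}{\partial\lambda}\,d\lambda.
\]
The proportionality condition $d\lambda=\mu\cdot d\Phi$ therefore forces $\partial\Phi/\partial z_i(z,\lambda)=0$ for each $i=1,\dotsc,d$. Combined with the hypothesis $(z,\lambda)\in\BV$, i.e.\ $\Phi(z,\lambda)=0$, this yields the required system of equations in the form $\partial\Phi/\partial z_i=0$.

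Finally, since the statement restricts to $(z,\lambda)\in(\CC^\times)^d\times\CC$, each $z_i$ is a unit, so the equation $\partial\Phi/\partial z_i(z,\lambda)=0$ is equivalent to $z_i\,\partial\Phi/\partial z_i(z,\lambda)=0$. This reformulation is the point of the proposition: $\Phi$ is a Laurent polynomial, so $z_i\partial/\partial z_i$ is the Euler-type logarithmic derivation $\sum_{(\alpha,j)}\alpha_i c_{(\alpha,j)}z^\alpha\lambda^j$, whose support is contained in $\calA(\Phi)$. This keeps the entire critical-point system inside the toric/Newton-polytope framework that is used throughout the paper. There is no serious obstacle here; the one thing to watch is allowing the Lagrange multiplier $\mu$ to vanish so that singular points of $\BV$ are correctly included among the critical points.
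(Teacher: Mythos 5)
The paper does not reproduce a proof of this statement; it simply cites Prop.~2.1 of [FS]. So I evaluate your argument on its own.

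Your approach is the natural one, but the ``uniform'' Lagrange-multiplier framing is stated with the proportionality in the wrong direction, and this actually fails to capture the singular case. You write that $d\lambda$ should be a $\CC$-multiple of $d\Phi$, ``allowing the multiplier to be zero.'' But $d\lambda=(0,\dotsc,0,1)$ is never the zero covector, so at a singular point of $\BV$ (where $d\Phi=0$) there is no scalar $\mu$, zero or otherwise, satisfying $d\lambda=\mu\,d\Phi$. With your phrasing the singular points would be \emph{excluded}, contradicting the convention you correctly state a moment earlier. Moreover, writing the equation in your component form, $1=\mu\,\partial\Phi/\partial\lambda$ forces $\mu\neq 0$, so the $\mu=0$ case is vacuous even formally.

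The fix is to reverse the proportionality: the right uniform condition is that $d\Phi$ is a $\CC$-multiple of $d\lambda$, i.e.\ $d\Phi=\mu\,d\lambda$, with $\mu=0$ permitted. At a singular point $d\Phi=0$, giving $\mu=0$; at a smooth critical point the two codimension-one kernels coincide so the (nonzero) forms are proportional; at a smooth non-critical point $d\Phi$ has some nonzero $dz_i$-component and so is not proportional to $d\lambda$. In each case the condition is exactly $\partial\Phi/\partial z_i=0$ for all $i$, and reading this chain in both directions gives the full ``if and only if,'' which your write-up only argued in the forward direction. Once that is corrected, multiplying by the units $z_i$ is harmless on $(\CC^\times)^d\times\CC$ and, as you note, serves to keep the critical-point equations as Laurent polynomials with support inside $\calN(\Phi)$, which is what makes them compatible with the toric compactification and with the initial-form machinery used throughout the paper.
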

%%%%%%%%%%%%%%%%%%%%%%%%%%%%%%%%%%%%%%%%%%%%%%%%%%%%%%%%%%%%%%%%%%%%%%%%%%%%%%%%%%%%%%%%%%%%%%%%%%%%

Write \defcolor{$\Psi$} for this system of polynomial equations~\eqref{Eq:CPE}, called the \demph{critical point equations}.
The number of critical points (counted with multiplicity, see Proposition~\ref{P:Degree_Section}) on a
Bloch variety is its \demph{critical point degree}.
This is 0 if there are infinitely many critical points.
Standard arguments in algebraic geometry imply that there is a (Zariski) dense open subset $U$ of the space of parameters
for operators on a graph $\Gamma$ with the following property:
All operators with parameters from $U$ have the same critical point degree, and this common degree is maximal among all operators on
$\Gamma$.
This maximum is called the \demph{critical point degree of\/ $\Gamma$}, written \defcolor{$\cpdeg(\Gamma)$}.
The following is a consequence of~\cite[Thm.\ 16]{DKS} and~\cite[Lem.\ 5.1]{FS}
(by the same arguments as~\cite[Thm.\ 5.2]{FS}).

%%%%%%%%%%%%%%%%%%%%%%%%%%%%%%%%%%%%%%%%%%%%%%%%%%%%%%%%%%%%%%%%%%%%%%%%%%%%%%%%%%%%%%%%%%%%%%%%%%%%
\begin{Theorem}\label{Th:SEC_CPD}
  If one operator on a periodic graph $\Gamma$ has $\cpdeg(\Gamma)$ distinct critical points,
  then the spectral edges conjecture holds for $\Gamma$. 
\end{Theorem}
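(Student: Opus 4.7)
The plan is to propagate the hypothesis to a Zariski-dense open family of operators via upper semicontinuity, and then invoke the two cited results to verify the two clauses of the spectral edges conjecture. The argument follows the structure of~\cite[Thm.\ 5.2]{FS}: the hypothesis exhibits one operator in a Zariski-open locus, density of that locus yields a generic family of well-behaved operators, Lemma~5.1 of~\cite{FS} supplies nondegenerate Hessians, and Theorem~16 of~\cite{DKS} supplies the distinct-bands property.

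The first step is to observe that the locus of parameters for which the critical point system $\Psi$ has $\cpdeg(\Gamma)$ distinct complex solutions is Zariski-open in parameter space. Indeed, the total number of isolated complex solutions of $\Psi$, counted with multiplicity, is by definition bounded by $\cpdeg(\Gamma)$ and equals $\cpdeg(\Gamma)$ on a Zariski-dense open subset $U$ of parameters; within $U$, requiring the solutions to be distinct---equivalently, that each have multiplicity one---is the complement of a discriminant locus, hence Zariski-open. The hypothesis exhibits one parameter in this refined locus, so the locus is nonempty and therefore Zariski-dense. Next, for any parameter in this locus, every critical point of $\lambda$ on $\BV$ must be a nonsingular point of $\BV$: a critical point lying on the singular locus of $\BV$ would contribute multiplicity at least two to $\Psi$, because all partials of $\Phi$ would vanish there and the critical point ideal would fail to cut out a local complete intersection of codimension one in the sense needed. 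Lemma~5.1 of~\cite{FS} then gives the nondegeneracy of the Hessian of $\lambda$ restricted to $\BV$ at each real critical point, establishing the nondegeneracy clause of the spectral edges conjecture. For the distinct-bands clause, I would pass to the further Zariski-open subset on which the $\lambda$-coordinates of the critical points are pairwise distinct---appealing to the argument of Theorem~16 of~\cite{DKS} to confirm that this further condition is nonempty---so that no two extrema share a spectral edge value and each spectral edge is attained by a unique band.

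The main technical obstacle, handled as in~\cite[Thm.\ 5.2]{FS}, is confirming that this Zariski-dense open locus in the complex parameter space intersects the real (Hermitian) parameter space in a set that is open and dense there, since the spectral edges conjecture is a statement about the real Bloch variety. Because the defining (in)equalities are polynomial with real coefficients, the real form of the parameter space inherits a dense open subset on which both clauses of the spectral edges conjecture hold; I expect this transfer from complex parameters to physical real parameters to be the step where the most care is required, as is customary with real-to-complex genericity arguments in the setting of self-adjoint operators.
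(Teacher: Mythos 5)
Your proposal reconstructs exactly the argument the paper gestures at: the paper offers no written proof, only the remark that the theorem ``is a consequence of~\cite[Thm.\ 16]{DKS} and~\cite[Lem.\ 5.1]{FS} (by the same arguments as~\cite[Thm.\ 5.2]{FS}),'' and you unpack that chain---openness/density of the locus with $\cpdeg(\Gamma)$ simple solutions, multiplicity-one forces nonsingularity on $\BV$, Lemma~5.1 of~\cite{FS} upgrades that to nondegenerate Hessians, and Theorem~16 of~\cite{DKS} supplies the distinct-bands clause, with a final transfer from complex to real parameter space. This is the same route; the one imprecision worth flagging is that ``pairwise distinct $\lambda$-values of critical points'' is not literally the ``distinct bands'' condition of the conjecture, but since you are delegating that clause to~\cite[Thm.\ 16]{DKS} the slippage is harmless.
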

%%%%%%%%%%%%%%%%%%%%%%%%%%%%%%%%%%%%%%%%%%%%%%%%%%%%%%%%%%%%%%%%%%%%%%%%%%%%%%%%%%%%%%%%%%%%%%%%%%%%

There is a well-known lower bound for the critical point degree (when it is nonzero).
As $\Phi(z,\lambda)=\Phi(z^{-1},\lambda)$, we have
 \begin{equation}\label{Eq:partial_reciprocal}
   \frac{\partial\Phi}{\partial z_i}(z,\lambda)\ =\ 
   -\frac{1}{z_i^2}\frac{\partial\Phi}{\partial z_i}(z^{-1},\lambda)
   \qquad\mbox{for } i=1,\dotsc,d\,.
 \end{equation}
A fixed point of the involution $z\mapsto z^{-1}$ is a 2-torsion point ($z^2=1$) and is called a
\demph{corner point}.
There are $2^d$ corner points, $\{\pm 1\}^d$.
By~\eqref{Eq:partial_reciprocal}, if $z$ is a corner point, then
\[
   \frac{\partial\Phi}{\partial z_i}(z,\lambda)\ =\ 0\,,   \qquad\mbox{for } i=1,\dotsc,d\,.
\]
This implies the following well-known fact.

%%%%%%%%%%%%%%%%%%%%%%%%%%%%%%%%%%%%%%%%%%%%%%%%%%%%%%%%%%%%%%%%%%%%%%%%%%%%%%%%%%%%%%%%%%%%%%%%%%%%
\begin{Proposition}
  Any point $(z,\lambda)\in \BV$ with $z$ a corner point is a critical point.
\end{Proposition}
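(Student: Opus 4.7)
The plan is essentially to combine the identity \eqref{Eq:partial_reciprocal} with the defining property of a corner point, and then check the critical point equations \eqref{Eq:CPE} one by one.

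First I would fix a corner point $z\in\{\pm 1\}^d$, so that $z=z^{-1}$ and $z_i^2=1$ for each $i$. Evaluating \eqref{Eq:partial_reciprocal} at such a $z$ yields
\[
   \frac{\partial\Phi}{\partial z_i}(z,\lambda)\ =\ -\frac{1}{z_i^2}\,\frac{\partial\Phi}{\partial z_i}(z,\lambda)\ =\ -\frac{\partial\Phi}{\partial z_i}(z,\lambda),
\]
so $\partial\Phi/\partial z_i(z,\lambda)=0$ for $i=1,\dotsc,d$, as already noted in the display preceding the statement. This is the heart of the argument; everything else is bookkeeping.

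Second, I would invoke the characterization of critical points in~\eqref{Eq:CPE}. The hypothesis $(z,\lambda)\in\BV$ says $\Phi(z,\lambda)=0$, which handles the first equation. The remaining equations $z_i\,\partial\Phi/\partial z_i(z,\lambda)=0$ follow at once from the vanishing established in the previous step (indeed they hold without the factor $z_i$). Hence $(z,\lambda)$ satisfies every equation of the system $\Psi$, and so is a critical point of $\lambda$ on the Bloch variety.

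There is no real obstacle: the proof is a direct substitution into the symmetry identity followed by a check of \eqref{Eq:CPE}. The only subtlety worth mentioning is the implicit use of $z\in(\CC^\times)^d$, which is automatic for corner points since their coordinates are $\pm 1$, so division by $z_i^2$ in \eqref{Eq:partial_reciprocal} is legitimate. One could optionally remark that this argument works over $\CC$ as well as over $\TT$, and that the $2^d$ corner points therefore always lie in the critical locus whenever they lie on $\BV$.
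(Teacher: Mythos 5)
Your argument is correct and follows the paper's own reasoning exactly: the display immediately preceding the Proposition already records that $\partial\Phi/\partial z_i(z,\lambda)=0$ at corner points via the symmetry identity~\eqref{Eq:partial_reciprocal}, and the Proposition is then stated as an immediate consequence by checking~\eqref{Eq:CPE}. Your proof simply spells out these two steps, so there is nothing to add.
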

%%%%%%%%%%%%%%%%%%%%%%%%%%%%%%%%%%%%%%%%%%%%%%%%%%%%%%%%%%%%%%%%%%%%%%%%%%%%%%%%%%%%%%%%%%%%%%%%%%%%
  
%%%%%%%%%%%%%%%%%%%%%%%%%%%%%%%%%%%%%%%%%%%%%%%%%%%%%%%%%%%%%%%%%%%%%%%%%%%%%%%%%%%%%%%%%%%%%%%%%%%%
\begin{Corollary}\label{C:Corner_CPdeg}
  Let $\Gamma$ be a periodic graph with fundamental domain $W$.
  If an operator on $\Gamma$ has finitely many critical points, then its critical point degree is at least $2^d|W|$.
\end{Corollary}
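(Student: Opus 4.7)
The plan is to show that each of the $2^d$ corner points contributes at least $|W|$ to the critical point degree, and then sum these contributions.

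Fix a corner point $\epsilon \in \{\pm 1\}^d$. By the preceding Proposition, every point $(\epsilon, \lambda)$ on the complex Bloch variety is a critical point of $\lambda$. Since $\Phi(\epsilon, \lambda)$ is monic of degree $|W|$ in $\lambda$ (being the characteristic polynomial of the $|W| \times |W|$ Floquet matrix $H(\epsilon)$), it has $|W|$ roots counted with multiplicity. Write the distinct roots as $\lambda_{\epsilon,1}, \ldots, \lambda_{\epsilon,k}$ with multiplicities $m_1, \ldots, m_k$ satisfying $m_1 + \cdots + m_k = |W|$. The key step is to show that each $(\epsilon, \lambda_{\epsilon,j})$ contributes multiplicity at least $m_j$ to the critical point degree.

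For this, a scheme-theoretic argument is cleanest. By hypothesis, the critical locus $X \vcentcolon= \Var(\Psi) \subset (\CC^\times)^d \times \CC$ is zero-dimensional, so the critical point degree is the sum over its closed points $p$ of $\dim_{\CC} \calO_{X,p}$. Consider the scheme-theoretic fiber $X_\epsilon \vcentcolon= X \cap (\{\epsilon\} \times \CC)$, defined by the ideal $(\Psi) + (z_1 - \epsilon_1, \ldots, z_d - \epsilon_d)$. Modulo the relations $z = \epsilon$ the generators $z_i \partial \Phi / \partial z_i$ vanish identically (by the preceding Proposition, since corner points are fixed by $z \mapsto z^{-1}$), so $X_\epsilon$ is the zero-scheme of $\Phi(\epsilon, \lambda)$ on the line $\{\epsilon\} \times \CC$. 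Thus $X_\epsilon$ has total length $|W|$, with local length $m_j$ at the closed point $(\epsilon, \lambda_{\epsilon,j})$. Since $\calO_{X_\epsilon, (\epsilon, \lambda_{\epsilon,j})}$ is a quotient of $\calO_{X, (\epsilon, \lambda_{\epsilon,j})}$ obtained by imposing the extra relations $z_i - \epsilon_i = 0$, we have $\dim_{\CC} \calO_{X, (\epsilon, \lambda_{\epsilon,j})} \geq m_j$. Summing over $j$ shows the fiber over $\epsilon$ contributes at least $|W|$ to the critical point degree.

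Distinct corner points have disjoint fibers, so summing over all $2^d$ of them yields $\cpdeg(\Gamma) \geq 2^d|W|$. I do not anticipate a genuine obstacle; the one subtlety is the local-length comparison in the degenerate case when $H(\epsilon)$ has repeated eigenvalues, but once phrased as a surjection of local rings the inequality of lengths is immediate. The hypothesis of finitely many critical points is used only to guarantee that $X$ is zero-dimensional so that the local-length count makes sense.
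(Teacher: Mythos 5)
Your proof is correct and follows the same approach as the paper: count the $|W|$ critical points (with multiplicity) lying over each of the $2^d$ corner points and sum. Your scheme-theoretic step, using the surjection $\calO_{X,p}\twoheadrightarrow\calO_{X_\epsilon,p}$ to compare the multiplicity as a critical point with the multiplicity of $\lambda$ as a root of $\Phi(\epsilon,\lambda)$, makes explicit a comparison that the paper's two-line proof treats as immediate.
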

%%%%%%%%%%%%%%%%%%%%%%%%%%%%%%%%%%%%%%%%%%%%%%%%%%%%%%%%%%%%%%%%%%%%%%%%%%%%%%%%%%%%%%%%%%%%%%%%%%%%
\begin{proof}
  For a corner point $z_0$, there are $m$ points $(z_0,\lambda)$ on the Bloch variety, counted with multiplicity.
  This gives $2^d|W|$ such critical points on the Bloch variety.
\end{proof}
%%%%%%%%%%%%%%%%%%%%%%%%%%%%%%%%%%%%%%%%%%%%%%%%%%%%%%%%%%%%%%%%%%%%%%%%%%%%%%%%%%%%%%%%%%%%%%%%%%%%

The main results in~\cite{FS} concern bounds for the critical point
degree.  Given a polytope $A$ of dimension $n$ in $\RR^n$ with vertices in the integer lattice $\ZZ^n$ (a lattice
polytope), its \demph{normalized volume} is $\defcolor{\nvol(A)}\vcentcolon=n!\vol(A)$, where \defcolor{$\vol(A)$} is the
Euclidean volume of $A$, normalized so that a fundamental domain of $\ZZ^n$ has volume one.
The normalized volume is always an integer.

%%%%%%%%%%%%%%%%%%%%%%%%%%%%%%%%%%%%%%%%%%%%%%%%%%%%%%%%%%%%%%%%%%%%%%%%%%%%%%%%%%%%%%%%%%%%%%%%%%%%
%
%  This is OK even if \ZZ\calA \neq \ZZ^{d+1}
%
\begin{Proposition}[Cor.~2.5~\cite{FS}]
  \label{P:CPD_Bound}
  The critical point degree of a discrete periodic operator with dispersion polynomial $\Phi$ is at most 
  $\nvol(\calN(\Phi))$.
\end{Proposition}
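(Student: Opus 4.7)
The plan is to apply the Bernstein-Kushnirenko-Khovanskii (BKK) bound to the critical point equations $\Psi$ of~\eqref{Eq:CPE}, viewed as a system of $d+1$ Laurent polynomials in the $d+1$ unknowns $(z_1,\ldots,z_d,\lambda)$.

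First, I would verify that each polynomial appearing in $\Psi$ has Newton polytope contained in $\calN(\Phi)$. Writing $\Phi = \sum c_{(\alpha,j)}\, z^\alpha \lambda^j$, a direct computation gives
\[
   z_i\frac{\partial \Phi}{\partial z_i}(z,\lambda)\ =\ \sum_{(\alpha,j)\in\calA(\Phi)} \alpha_i\, c_{(\alpha,j)}\, z^\alpha \lambda^j\,,
\]
whose support is the subset $\{(\alpha,j)\in\calA(\Phi)\colon \alpha_i\neq 0\}\subset\calA(\Phi)$. Hence the Newton polytope of $z_i\,\partial\Phi/\partial z_i$ is contained in $\calN(\Phi)$ for each $i$, and trivially so is that of $\Phi$ itself.

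Second, I would invoke Kushnirenko's theorem (the equal-polytope case of BKK): a system of $n$ Laurent polynomials in $n$ variables whose Newton polytopes all lie inside a lattice polytope $N\subset\RR^n$ has at most $\nvol(N)$ isolated solutions in $(\CC^\times)^n$, counted with multiplicity. In toric-geometric terms, the polynomials extend to global sections of the very ample line bundle $L$ on the projective toric variety $X$ associated to $N$, and the self-intersection $L^{n}=\nvol(N)$ controls the degree of their common zero scheme. Applied with $n=d+1$ and $N=\calN(\Phi)$ to the system $\Psi$, this bounds by $\nvol(\calN(\Phi))$ the number of isolated solutions of $\Psi$ in the torus $(\CC^\times)^{d+1}$.

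The main technical issue, and the step I expect to be most delicate, is that a critical point lies in $(\CC^\times)^d\times\CC$ rather than in $(\CC^\times)^{d+1}$, so Kushnirenko a priori overlooks any critical point with $\lambda=0$. To address this I would use a spectrum-shifting trick: replacing $V$ by $V+t$ for generic $t\in\RR$ sends $\Phi(z,\lambda)$ to $\Phi(z,\lambda-t)$, is a bijection on critical points (shifting the $\lambda$-coordinate by $t$), and preserves $\calN(\Phi)$, since $\Phi$ is monic of degree $|W|$ in $\lambda$ and for a sufficiently general operator its support meets the full column $\{(0,k)\colon 0\le k\le|W|\}$, so every new monomial produced by expanding $(\lambda-t)^j$ has exponent already in $\calN(\Phi)$. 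For generic $t$ no critical point has $\lambda=0$, and the Kushnirenko bound of the previous step then yields the claimed inequality.
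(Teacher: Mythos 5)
Your overall plan---apply Kushnirenko's theorem to the system $\Psi$ in $(\CC^\times)^{d+1}$---is correct, and you rightly flag the delicacy that a critical point may have $\lambda=0$. But the paper's route around this is structural, not a shift: since $\lambda$ appears in $\Phi$ only with non-negative exponents, the map $\varphi_\calA$~\eqref{Eq:newVarphi_A} extends to $(\CC^\times)^d\times\CC$, so $X^\circ_\calA=\varphi_\calA((\CC^\times)^d\times\CC)$ already contains the orbit over the base face $\lambda=0$. Every critical point, with $\lambda$ zero or not, then maps into a linear section of the toric variety $X_\calA$, whose degree times the covering degree $[\ZZ^{d+1}:\ZZ\calA]$ equals $\nvol(\calN(\Phi))$ by Kushnirenko. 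No translation of $\lambda$ is required; this is precisely what Corollary~\ref{Cor:CPDeg} later refines.

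Your shift trick can be made to work, but the step ``its support meets the full column $\{(\bfzero,k)\}$, so every new monomial from expanding $(\lambda-t)^j$ lies in $\calN(\Phi)$'' is a non sequitur. Take $\calA=\{(0,0),(0,1),(0,2),(1,1)\}$: it contains the full column, yet the exponent $(1,0)$ produced from $z(\lambda-t)$ is not in $\conv\calA$. What you actually need is that $\calA(\Phi)$ is \emph{downward closed} in $\lambda$: if $(\alpha,j)\in\calA(\Phi)$ and $j\ge1$ then $(\alpha,j-1)\in\calA(\Phi)$. This does hold for a general operator, and the clean justification comes from Section~\ref{S:initGraph}: a cycle cover of $\hGamma$ whose weight has monomial $z^\alpha\lambda^j$ has a loop labeled $\lambda$ at some vertex $v$, and swapping it for the $-V(v)$-loop yields a cycle cover with monomial $z^\alpha\lambda^{j-1}$, which survives in $\Phi$ by the cancellation-freeness of Proposition~\ref{P:GFMcancellation-free}. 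Granting that, your shift argument proves the bound for generic operators, whereas the Proposition is stated for an arbitrary operator, whose support need not remain downward closed after a degeneration of parameters; the extended-torus compactification avoids even that genericity hypothesis.
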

%%%%%%%%%%%%%%%%%%%%%%%%%%%%%%%%%%%%%%%%%%%%%%%%%%%%%%%%%%%%%%%%%%%%%%%%%%%%%%%%%%%%%%%%%%%%%%%%%%%%

A second result of~\cite{FS} gives conditions implying that the bound is attained.
This is in terms of the asymptotic behavior of the Bloch variety.
As we  explain in Section~\ref{S:toric}, this asymptotic behavior is controlled by the Newton polytope  $\calN(\Phi)$.
Dot product with an integer vector $\eta\in\ZZ^{d+1}$ is a linear form on $\RR^{d+1}$. 
The subset of $\calN(\Phi)$ on which this linear form is minimized is the face of $\calN(\Phi)$ \demph{exposed} by $\eta$.
Every face of $\calN(\Phi)$ is exposed by some  vector $\eta\in\ZZ^{d+1}$.
The dimension of the linear span of the vectors exposing a face $F$ is its \demph{codimension},
$\defcolor{\codim(F)}\vcentcolon=d{+}1{-}\dim(F)$.
A face $F$ of $\calN(\Phi)$ has a normalized volume, $\nvol(F)=(\dim(F))!\, \vol(F)$, where $\vol(F)$ is normalized with respect
to the intersection of $\ZZ^{d+1}$ with the affine span of $F$ (this intersection is isomorphic to $\ZZ^{\dim(F)}$).

Restricting the sum~\eqref{Eq:DispersionPolynomial} to the face $F$ exposed by $\eta$ gives the \demph{initial  form}  of $\Phi$,
 \[
   \defcolor{\ini_\eta\Phi}\ \vcentcolon=\
   \sum_{(\alpha,j)\in \calA(\Phi)\cap F} c_{(\alpha,j)} z^\alpha \lambda^j\, .
 \]
 This describes the asymptotic behavior of the Bloch variety in the logarithmic direction $\eta$, and is studied in
 Section~\ref{S:initial_matrix}.
%We explain that. 
A vector $(\eta,a)\in\ZZ^{d+1}$ gives an action of $\CC^\times$ on $(\CC^\times)^d\times\CC$, 
\[
   t\in\CC^\times\,,\ (z,\lambda)\in(\CC^\times)^d\times\CC\
   \longmapsto\ t.(z,\lambda)\ \vcentcolon=\ (t^{\eta_1}z_1,t^{\eta_2}z_2,\dotsc,t^{\eta_d}z_d\,,\, t^a\lambda)\,.
\]
We have the following result about initial forms.
   
%%%%%%%%%%%%%%%%%%%%%%%%%%%%%%%%%%%%%%%%%%%%%%%%%%%%%%%%%%%%%%%%%%%%%%%%%%%%%%%%%%%%%%%%%%%%%%%%%%%% 
\begin{Lemma}\label{Lem:quasihomogeneous}   
  Let $(\eta,a)\in\ZZ^{d+1}$ and $r$ be the minimal value it takes on $\calN(\Phi)$.
  We have the following.
  \begin{enumerate}[$(i)$]
    \item $\ini_{(\eta,a)}\Phi(t.(z,\lambda)) = t^r\, \ini_{(\eta,a)}\Phi(z,\lambda)$.\vspace{1pt}
    \item $t^{-r}\Phi(t.(z,\lambda)) =  \ini_{(\eta,a)}\Phi(z,\lambda) + \mbox{higher order terms in $t$}$.
    \item ${\displaystyle  r\, \ini_{(\eta,a)}\Phi\ =\ \sum_{i=1}^d \eta_i z_i \frac{\partial \ini_{(\eta,a)} \Phi}{\partial z_i}\ +\
              a \lambda \frac{\partial \ini_{(\eta,a)} \Phi}{\partial \lambda} }$.
    \item For each $i=1,\dotsc,d$, $\ini_{(\eta,a)} z_i\frac{\partial }{\partial z_i}\Phi=z_i\frac{\partial }{\partial z_i}\ini_{(\eta,a)} \Phi$.
      %
      %  This holds, as we define the initial form to be the restriction to F.  This is subtle.
      %
  \end{enumerate}
\end{Lemma}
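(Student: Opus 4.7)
The plan is to treat all four parts as direct consequences of the observation that, with respect to the $\CC^\times$-action defined by $(\eta,a)$, the monomial $z^\alpha\lambda^j$ is an eigenvector: $(t.(z,\lambda))^{(\alpha,j)} = t^{(\eta,a)\cdot(\alpha,j)} z^\alpha\lambda^j$, where $(\eta,a)\cdot(\alpha,j) = \eta\cdot\alpha + aj$. Since $r$ is the minimum of this pairing on $\calA(\Phi)$, the face $F$ collects precisely the monomials of weight $r$, and all other monomials of $\Phi$ carry strictly larger weight.

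For (i), every monomial of $\ini_{(\eta,a)}\Phi$ has weight exactly $r$, so the substitution $(z,\lambda)\mapsto t.(z,\lambda)$ pulls out a common factor $t^r$. For (ii), applying the same substitution to $\Phi$ itself gives $\Phi(t.(z,\lambda)) = \sum_{(\alpha,j)\in\calA(\Phi)} c_{(\alpha,j)}\, t^{\eta\cdot\alpha+aj}\, z^\alpha\lambda^j$; after dividing by $t^r$, the terms with $(\alpha,j)\in F$ assemble into $\ini_{(\eta,a)}\Phi$, and all remaining terms carry strictly positive powers of $t$.

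For (iii), apply $\frac{d}{dt}\big|_{t=1}$ to both sides of (i). The right-hand side becomes $r\,\ini_{(\eta,a)}\Phi(z,\lambda)$. The chain rule applied to the left-hand side, together with $\frac{d}{dt}(t^{\eta_i} z_i)\big|_{t=1} = \eta_i z_i$ and $\frac{d}{dt}(t^a\lambda)\big|_{t=1} = a\lambda$, yields $\sum_i \eta_i z_i \frac{\partial\ini_{(\eta,a)}\Phi}{\partial z_i} + a\lambda\frac{\partial \ini_{(\eta,a)}\Phi}{\partial\lambda}$, which is Euler's identity for functions quasi-homogeneous of weight $r$.

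For (iv), observe that $z_i\frac{\partial}{\partial z_i}$ acts diagonally on the monomial basis by $z^\alpha\lambda^j\mapsto \alpha_i z^\alpha\lambda^j$: it never shifts exponent vectors, it only rescales coefficients. Hence restricting to the monomials of weight $r$ commutes with this operator, and both sides of (iv) equal $\sum_{(\alpha,j)\in F\cap\calA(\Phi)} \alpha_i c_{(\alpha,j)} z^\alpha \lambda^j$. The only bookkeeping subtlety is the convention for $\ini_{(\eta,a)}$ applied to a polynomial whose own support may miss $F$ (namely, if every $(\alpha,j)\in F$ has $\alpha_i = 0$): we read $\ini_{(\eta,a)}$ as extracting coefficients at the same weight $r$ determined by $\calN(\Phi)$, so that both sides are zero simultaneously in this degenerate case. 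I do not anticipate a real obstacle: all four parts are formal unpackings of the weighted $\CC^\times$-action, with the chain rule in (iii) being the only nontrivial computation.
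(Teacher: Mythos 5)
Your proof is correct and follows essentially the paper's route: parts (i) and (ii) are read off from the eigenvector calculation $(t.(z,\lambda))^{(\alpha,j)} = t^{\eta\cdot\alpha+aj}z^\alpha\lambda^j$, and (iv) from the diagonal action of $z_i\frac{\partial}{\partial z_i}$ on monomials. For (iii) you differentiate (i) at $t=1$ (Euler's identity for quasi-homogeneous functions), whereas the paper computes directly on monomials via $z_i\partial_{z_i}z^\alpha=\alpha_i z^\alpha$ and $\lambda\partial_\lambda\lambda^j=j\lambda^j$; these are the same calculation in two notations, so the distinction is cosmetic. Your remark on (iv) about reading $\ini_{(\eta,a)}$ at the fixed weight $r$ determined by $\calN(\Phi)$ (so that both sides vanish simultaneously when every monomial on the face has $\alpha_i=0$) is a genuine subtlety the paper leaves implicit, and your convention matches how the lemma is used immediately afterward to define the initial subsystem $\ini_{(\eta,a)}\Psi$.
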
  
%%%%%%%%%%%%%%%%%%%%%%%%%%%%%%%%%%%%%%%%%%%%%%%%%%%%%%%%%%%%%%%%%%%%%%%%%%%%%%%%%%%%%%%%%%%%%%%%%%%% 
\begin{proof}   
  Statements $(i)$ and $(ii)$ follow from the calculation
  \[
  (t.(z,\lambda))^{(\alpha,j)}\ =\ t^{\eta\cdot\alpha + aj} z^\alpha \lambda^j\,, 
  \]
  and $(iii)$ holds as $z_i\frac{\partial}{\partial z_i}z^\alpha = \alpha_i z^\alpha$ and
  $\lambda\frac{\partial}{\partial\lambda}\lambda^j = j \lambda^j$.
  This implies that monomials are eigenvectors for the operators
  $z_i\frac{\partial}{\partial z_i}$ and $\lambda\frac{\partial}{\partial\lambda}$, which implies $(iv)$.
\end{proof}  
%%%%%%%%%%%%%%%%%%%%%%%%%%%%%%%%%%%%%%%%%%%%%%%%%%%%%%%%%%%%%%%%%%%%%%%%%%%%%%%%%%%%%%%%%%%%%%%%%%%% 
   
As monomials are eigenvectors for the operators $z_i\frac{\partial}{\partial z_i}$, each polynomial in~\eqref{Eq:CPE}
has support a subset of $\calN(\Phi)$.
Thus, we may replace each polynomial $f$  in the critical point equations~\eqref{Eq:CPE} by its initial form $\ini_{(\eta,a)} f$,
giving its \demph{initial subsystem}, \defcolor{$\ini_{(\eta,a)}\Psi$}.
By Lemma~\ref{Lem:quasihomogeneous} $(iv)$, the initial subsystem $\ini_{(\eta,a)}\Psi$ is the system of critical point
equations for the initial form $\ini_{(\eta,a)}\Phi$ of $\Phi$.

We distinguish three types of faces of $\calN(\Phi)$.
The \demph{base} is the face exposed by the vector $(\bfzero,1)$ parallel to the last
coordinate axis.
The corresponding initial form of $\Phi$ is $\det H(z)$ and is obtained by setting $\lambda=0$.
A face $F$ is \demph{vertical} if it contains a vector parallel to $(\bfzero,1)$.
Any face containing a vertical face is vertical.
Vectors exposing a vertical face are horizontal, having the form $(\eta,0)$.
The fourth Newton polytope of Figure~\ref{F:moreNewtonPolytopes}
has four vertical facets, each of which is a triangle of normalized volume (area) two.
%%%%%    Observe that $\calN(\Phi)$ is vertical.
Lemma~\ref{Lem:quasihomogeneous} implies the following.

%%%%%%%%%%%%%%%%%%%%%%%%%%%%%%%%%%%%%%%%%%%%%%%%%%%%%%%%%%%%%%%%%%%%%%%%%%%%%%%%%%%%%%%%%%%%%%%%%%%%
\begin{Corollary}\label{Cor:VerticalSubSystems}
  Suppose that $(\eta,0)$ exposes a vertical face $F$ of $\calN(\Phi)$.
  Then the initial subsystem $\ini_{(\eta,0)}\Psi$ of the critical point equations~\eqref{Eq:CPE} satisfies
  $\codim(F)$-many independent linear equations given by the vectors that expose $F$.
\end{Corollary}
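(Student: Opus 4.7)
The plan is to extract $k := \codim(F)$ linear relations among the equations of $\ini_{(\eta, 0)} \Psi$ by applying the quasi-homogeneity identity in Lemma~\ref{Lem:quasihomogeneous}$(iii)$ to each of $k$ linearly independent horizontal vectors exposing $F$.

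First I would set up the geometry. Since $F$ is vertical, it contains a vector parallel to $(\bfzero, 1)$, which forces the last coordinate of any vector exposing $F$ to vanish, so all exposing vectors of $F$ lie in $\RR^d \times \{0\}$ and form the relative interior of a $k$-dimensional cone. I can therefore choose vectors $(\eta^{(1)}, 0), \ldots, (\eta^{(k)}, 0)$ in this set whose $d$-dimensional projections $\eta^{(1)}, \ldots, \eta^{(k)}$ are linearly independent in $\RR^d$. For each $j$, apply Lemma~\ref{Lem:quasihomogeneous}$(iii)$ with $(\eta, a) = (\eta^{(j)}, 0)$. Since $(\eta^{(j)}, 0)$ exposes the same face $F$ as $(\eta, 0)$, one has $\ini_{(\eta^{(j)}, 0)} \Phi = \ini_{(\eta, 0)} \Phi$. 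Writing $r_j$ for the minimum value of $\langle (\eta^{(j)}, 0), \cdot \rangle$ on $\calN(\Phi)$, combining $(iii)$ with Lemma~\ref{Lem:quasihomogeneous}$(iv)$ yields
\[
  r_j \, \ini_{(\eta, 0)} \Phi
  \ =\ \sum_{i=1}^d \eta^{(j)}_i\, \ini_{(\eta, 0)}\!\Bigl( z_i \frac{\partial \Phi}{\partial z_i} \Bigr).
\]
This exhibits a linear relation with coefficient vector $(-r_j, \eta^{(j)}_1, \ldots, \eta^{(j)}_d) \in \RR^{d+1}$ among the $d+1$ polynomials $\ini_{(\eta,0)}\Phi,\; \ini_{(\eta,0)}(z_1 \partial_{z_1} \Phi), \ldots, \ini_{(\eta,0)}(z_d \partial_{z_d} \Phi)$ that make up $\ini_{(\eta, 0)} \Psi$.

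To finish, the $k$ relations so obtained are independent: by construction $\eta^{(1)}, \ldots, \eta^{(k)}$ are linearly independent in $\RR^d$, so the coefficient vectors $(-r_j, \eta^{(j)})$ span a $k$-dimensional subspace of $\RR^{d+1}$. The main subtlety is the geometric step of selecting $k$ linearly independent horizontal vectors inside the open relative interior of the normal cone of $F$, which follows immediately from verticality and the codimension definition; the rest is a direct unpacking of parts $(iii)$ and $(iv)$ of Lemma~\ref{Lem:quasihomogeneous}.
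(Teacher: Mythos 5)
Your proof is correct and takes essentially the same approach as the paper: both specialize the Euler-type identity of Lemma~\ref{Lem:quasihomogeneous}$(iii)$ to horizontal vectors $(\zeta,0)$ exposing the vertical face $F$, so that the $\lambda\partial_\lambda$ term drops out and one obtains a linear relation among the members of $\ini_{(\eta,0)}\Psi$. You simply make explicit what the paper leaves implicit, namely the choice of $\codim(F)$ linearly independent exposing vectors, the use of part $(iv)$ to identify $z_i\partial_{z_i}\ini_{(\eta,0)}\Phi$ with $\ini_{(\eta,0)}(z_i\partial_{z_i}\Phi)$, and the verification that the resulting coefficient vectors are linearly independent.
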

%%%%%%%%%%%%%%%%%%%%%%%%%%%%%%%%%%%%%%%%%%%%%%%%%%%%%%%%%%%%%%%%%%%%%%%%%%%%%%%%%%%%%%%%%%%%%%%%%%%%
\begin{proof}
  Vectors exposing $F$ have the form $(\zeta,0)$, so that Lemma~\ref{Lem:quasihomogeneous} $(iii)$ becomes
  \begin{equation}\label{Eq:vertical_form}
      r\cdot \ini_{(\zeta,0)}\Phi\ =\ \sum_{i=1}^d \zeta_i z_i \frac{\partial \ini_{(\zeta,0)}\Phi}{\partial z_i}\,, 
  \end{equation}
  a linear equation on $\ini_{(\zeta,0)}\Psi$.    
\end{proof}
%%%%%%%%%%%%%%%%%%%%%%%%%%%%%%%%%%%%%%%%%%%%%%%%%%%%%%%%%%%%%%%%%%%%%%%%%%%%%%%%%%%%%%%%%%%%%%%%%%%%

An \demph{oblique face} is one that is neither vertical nor the base.
We collect some results about the critical point degree and initial subsystems of $\Psi$.

%%%%%%%%%%%%%%%%%%%%%%%%%%%%%%%%%%%%%%%%%%%%%%%%%%%%%%%%%%%%%%%%%%%%%%%%%%%%%%%%%%%%%%%%%%%%%%%%%%%%
\begin{Proposition} \label{P:SolutionsAtInfinity}
  \mbox{\ }
  \begin{enumerate}[$(i)$]
   \item 
     The bound on the critical point degree of Proposition~\ref{P:CPD_Bound} is attained if and only if for every
     vector $\eta$ that is not parallel to $(\bfzero,1)$, the initial  subsystem $\ini_\eta\Psi$ of the critical point
     equations has no solutions. 

   \item 
     If $\eta$ exposes an oblique face, then any solution to the facial subsystem $\ini_\eta\Psi$ is a singular point of the initial
     Bloch variety $\Var(\ini_\eta\Phi)$. 

   \item 
     If $\eta$ exposes a vertical face, then  $\ini_\eta\Psi$ has a solution.
  \end{enumerate}
\end{Proposition}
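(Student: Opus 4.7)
My proof plan addresses the three parts separately; all three rely on Lemma~\ref{Lem:quasihomogeneous}.

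For $(i)$, I would invoke the BKK/Kushnirenko theorem. The system $\Psi$ has $d{+}1$ polynomials whose supports lie in $\calN(\Phi)$, so the number of isolated solutions in the torus $(\CC^\times)^{d+1}$ is bounded by $\nvol(\calN(\Phi))$, with equality iff every facial initial subsystem $\ini_\eta\Psi$ has no torus solution. Because the critical point degree counts solutions in $(\CC^\times)^d\times\CC$, solutions with $\lambda=0$ contribute as well; these correspond precisely to the initial subsystem exposed by $(\bfzero,1)$ (the base face), so this ray must be exempted. Hence equality holds iff $\ini_\eta\Psi$ has no solutions in $(\CC^\times)^{d+1}$ for every $\eta$ not parallel to $(\bfzero,1)$.

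For $(ii)$, write the exposing vector as $(\eta,a)$. Obliqueness forces $a\neq 0$ (if $a=0$ then $F$ would be vertical, and by assumption $F$ is not the base). At a solution $(z,\lambda)\in(\CC^\times)^{d+1}$ of $\ini_{(\eta,a)}\Psi$, the hypotheses $\ini_{(\eta,a)}\Phi=0$ and $z_i\partial_{z_i}\ini_{(\eta,a)}\Phi=0$ reduce the identity of Lemma~\ref{Lem:quasihomogeneous}$(iii)$,
\[
  r\cdot\ini_{(\eta,a)}\Phi \ =\ \sum_i \eta_i\,z_i\frac{\partial\ini_{(\eta,a)}\Phi}{\partial z_i}\ +\ a\lambda\frac{\partial\ini_{(\eta,a)}\Phi}{\partial \lambda},
\]
to $a\lambda\,\partial_\lambda\ini_{(\eta,a)}\Phi=0$. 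Since $a,\lambda\neq 0$, this forces $\partial_\lambda\ini_{(\eta,a)}\Phi=0$; together with $z_i\neq 0$ and the remaining initial equations, all first partials of $\ini_{(\eta,a)}\Phi$ vanish at $(z,\lambda)$, so it is a singular point of $\Var(\ini_{(\eta,a)}\Phi)$.

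For $(iii)$, I would combine Corollary~\ref{Cor:VerticalSubSystems} with an existence argument. The $m := \codim(F)\geq 1$ linear relations supplied by that corollary make the $d{+}1$ equations of $\ini_\eta\Psi$ dependent, so its zero set is invariant under the $m$-dimensional subtorus of $(\CC^\times)^d$ dual to the exposing directions (which acts trivially on $\lambda$), and has dimension at least $m$ whenever nonempty. The main obstacle is establishing nonemptiness, which I would handle by descending to the quotient torus of dimension $\dim F$: the initial polynomial and its critical-point equations descend, the Newton polytope of the quotient polynomial is the full-dimensional image of $F$ with positive normalized volume, and Kushnirenko's theorem then yields a solution in the quotient, which lifts to a solution of $\ini_\eta\Psi$ in $(\CC^\times)^{d+1}$.
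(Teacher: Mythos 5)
Your sketch for (i) matches the BKK/Kushnirenko argument underlying [FS, Thm.\ 3.8], and your quasi-homogeneity argument for (ii) matches [FS, Lem.\ 3.7]; the paper itself discharges these two parts by citation. One caveat on (ii): the justification ``if $a=0$ then $F$ would be vertical'' is false. Already for the hexagonal lattice the Newton polytope is a hexagonal pyramid, and $(-1,0,0)$ exposes the horizontal edge $\conv\{(1,0,0),(1,-1,0)\}$ of its base, which is oblique (neither vertical nor the base) yet exposed by a horizontal vector. The correct statement is that every oblique face admits \emph{some} exposing vector $(\eta,a)$ with $a\neq 0$: if the whole normal cone lay in $\{a=0\}$, one checks that the face would span its $\lambda$-columns and therefore contain a vertical segment, hence be vertical. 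Since $\ini_\eta\Psi$ depends only on the face exposed, you may pass to such a vector without loss of generality; the fix is cheap, but the justification as written is wrong.

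The genuine gap is in (iii). Kushnirenko's theorem provides the upper bound $\nvol(P)$ on the number of isolated torus solutions, with equality only for generic coefficients; it does not assert that the specific, non-generic system $\ini_\eta\Psi$ has any torus solution at all. Since $(\CC^\times)^{\dim F}$ is not compact, the descended system could a priori be inconsistent there, and nothing in your argument rules this out. The paper's proof (via Lemma~\ref{Lem:Vertical}) gets nonemptiness from compactness: it works in the projective toric variety $X_\calF\subset\PP^\calF$, uses Corollary~\ref{Cor:VerticalSubSystems} to show the linear space $L_\calF=\PP^\calF\cap L_\Psi$ has codimension at most $\dim(F)=\dim X_\calF$ in $\PP^\calF$, and then the projective dimension theorem gives $X_\calF\cap L_\calF\neq\emptyset$. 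Your descent idea is compatible with this, but you must compactify the quotient torus before counting; an appeal to Kushnirenko alone does not supply existence.
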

%%%%%%%%%%%%%%%%%%%%%%%%%%%%%%%%%%%%%%%%%%%%%%%%%%%%%%%%%%%%%%%%%%%%%%%%%%%%%%%%%%%%%%%%%%%%%%%%%%%%

Statement $(i)$ is~\cite[Thm.\ 3.8]{FS}, while $(ii)$ is~\cite[Lem.\ 3.7]{FS}.
The third statement follows from Corollary~\ref{Cor:VerticalSubSystems} and Lemma~\ref{Lem:Vertical}.
While statement $(iii)$ was not stated in~\cite{FS}, its main results excluded Newton polytopes
with vertical faces.
Our main result, Theorem~\ref{Th:main}, is a refinement of Proposition~\ref{P:SolutionsAtInfinity}.
These results all involve studying the critical point equations on a compactification of
the Bloch variety in a projective toric variety.
The next two sections develop the algebraic combinatorics and algebraic geometry needed for this.

%%%%%%%%%%%%%%%%%%%%%%%%%%%%%%%%%%%%%%%%%%%%%%%%%%%%%%%%%%%%%%%%%%%%%%%%%%%%%%%%%%%%%%%%%%%%%%%%%%%%
%
\section{The Initial Graph of a periodic operator}\label{S:initGraph}

We establish a purely combinatorial result about initial forms of a dispersion polynomial. 
This enables the definition of an initial graph; a fundamental concept for Sections~\ref{S:vertical} and \ref{S:Structural}.
Part of this appeared in~\cite{FLiu}, and it is key to~\cite{FLSS}.

%%%%%%%%%%%%%%%%%%%%%%%%%%%%%%%%%%%%%%%%%%%%%%%%%%%%%%%%%%%%%%%%%%%%%%%%%%%%%%%%%%%%%%%%%%%%%%%%%%%%
\subsection{Cancellation-free determinants}\label{S:cancellation-free}

Let $M=(f_{i,j})_{i,j=1}^n$ be a matrix of polynomials $f_{i,j}(y)$ in some (set of) indeterminates $y$.
Recall the formula for the determinant of a matrix,
\[
   \det M\ =\ 
   \sum_{w\in S_n}  \sgn(w) f_{1,w(1)}(y)\, f_{2,w(2)}(y)\, \dotsb\, f_{n,w(n)}(y)\ =\
   \sum_{w\in S_n}  \sgn(w) M_w\ .
\]
This is the sum over all permutations $w\in S_n$, $\sgn(w)\in\{\pm 1\}$ is the sign of $w$, and \defcolor{$M_w$} is
the corresponding product of entries. 
We say that \defcolor{$\det M$} is \demph{cancellation-free} if, for all $w\in S_n$ such that $M_w$ is a non-zero
polynomial, its support is a subset of the support of $\det M$.

The \demph{generic Floquet matrix} \defcolor{$H(z,e,V)$} for a graph $\Gamma$ is the Floquet
matrix~\eqref{Eq:Floquet_Matrix} where the edge parameters $e(u,v)$ and the potentials $V(v)$ are indeterminates.
The \demph{generic dispersion polynomial} for $\Gamma$ is
\[
   \defcolor{\Phi(z,\lambda,e,V)}\ \vcentcolon=\ \det(\lambda  {\textit Id}_n\ -\ H(z,e,V))\,,
\]
where $H(z,e,V)$ is the generic Floquet matrix for $\Gamma$, and we identify $W$ with $[n]$.

%%%%%%%%%%%%%%%%%%%%%%%%%%%%%%%%%%%%%%%%%%%%%%%%%%%%%%%%%%%%%%%%%%%%%%%%%%%%%%%%%%%%%%%%%%%%%%%%%%%%
\begin{Proposition}\label{P:GFMcancellation-free}
  The generic dispersion polynomial is cancellation-free.
\end{Proposition}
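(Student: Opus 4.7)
The plan is to interpret the signed expansion of $\det(\lambda\,\textit{Id}-H(z,e,V))$ as a sum over decorated cycle covers of $\Gamma$, and to show that any given monomial $\mu$ in the expansion determines this decoration up to symmetries that preserve its sign. Because in the generic setting all parameters $\{z_i,\lambda,V(v),e\}$ are independent indeterminates, monomials coincide only when they agree in every variable, which leaves room for a clean combinatorial matching.

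I would first expand each entry of $\lambda\,\textit{Id}-H$ into its sum of monomials in the generic parameters: on the diagonal, $\lambda-V(v)+\sum_{\alpha\neq 0,\,v\sim\alpha+v} e_{(v,\alpha+v)}z^\alpha$; off the diagonal at $(v,u)$ with $v\neq u$, $\sum_{v\sim\alpha+u} e_{(v,\alpha+u)}z^\alpha$. Expanding the determinant $\sum_{w\in S_W}\sgn(w)\prod_v(\lambda\,\textit{Id}-H)_{v,w(v)}$ by picking one monomial term from each entry gives a signed sum indexed by pairs $(w,\phi)$. Each such pair partitions $W$ into vertices choosing $\lambda$ (the set $F_\lambda$, forced to be fixed by $w$), vertices choosing $-V(v)$ (the set $F_V$, also fixed by $w$), and edge-vertices (the set $S$, each choosing some $e_{(v,\alpha_v+w(v))}z^{\alpha_v}$). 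Its signed contribution to $\det(\lambda\,\textit{Id}-H)$ is $\sgn(w)(-1)^{|F_V|}$ times the produced monomial.

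Next, given a monomial $\mu$ appearing in some $M_w$, I would show that the data $F_V$, $S$, $F_\lambda$, and the cycle structure of $w$ are all uniquely determined by $\mu$. The $V$-variables in $\mu$ immediately fix $F_V$. The multiset of $e$-variables in $\mu$ defines an undirected multigraph $G_\mu$ on $W$; since each edge-vertex contributes one outgoing edge and, by bijectivity of $w|_S$, one incoming edge, every vertex of $G_\mu$ must have degree exactly two. Hence $G_\mu$ is uniquely a disjoint union of cycles (treating loops as degenerate one-cycles), so $S=V(G_\mu)$, $F_\lambda=W\setminus(F_V\sqcup S)$, and the unordered cycle structure of $w$ are all forced by $\mu$. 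In particular $\sgn(w)=\prod_{k\geq 2}(-1)^{k-1}$, taken over the non-loop cycle lengths, is determined by $\mu$ alone.

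Finally, I would observe that reversing the orientation of a cycle of $G_\mu$ leaves both its length and $\sgn(w)$ unchanged. The only freedom in reconstructing $(w,\phi)$ from $\mu$ lies in such reversals, and a reversal preserves $\mu$ only when the signed translate-sum around the cycle vanishes; loop orientations are always pinned by the $z$-exponent of $\mu$. Every labeled choice producing $\mu$ therefore carries the same sign $\sgn(w)(-1)^{|F_V|}$, so the coefficient of $\mu$ in $\det(\lambda\,\textit{Id}-H)$ is a positive integer times this common sign, hence nonzero. The main technical point is the rigidity of the cycle decomposition of the degree-two multigraph $G_\mu$, together with the verification that orientation reversal preserves both the monomial and the sign; once these are established, the support of every nonzero $M_w$ is contained in that of $\det(\lambda\,\textit{Id}-H)$, proving cancellation-freeness.
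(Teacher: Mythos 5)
Your proof is correct and follows essentially the same route as the paper's: the core combinatorial observation in both is that the edge and potential parameters appearing in a monomial determine (via the sparsity of the Floquet matrix, Remark~\ref{Rem:Sparse}) a unique unoriented cycle cover of the quotient graph, hence a unique $\sgn(w)$, so all contributions to a given monomial share a common sign and cannot cancel. The paper packages this by first proving the cycle-cover formula $\det M = \sum_c \sgn(c)\,n_c\,M_c$ for generic symmetric matrices and then specializing at $z=\bOne$ before lifting, whereas you work directly with the fully expanded determinant; that is an organizational difference, not a different method. One minor remark: your final observations about which cycle reversals preserve the $z$-exponent (and loop orientations being ``pinned'') are unnecessary for the conclusion, and the loop claim is not quite right as stated (the total $z$-exponent of $\mu$ need not determine each individual translation $\alpha_v$), but the argument does not rely on it---only on the fact that every $(w,\phi)$ producing $\mu$ yields the same unoriented cycle cover, hence the same sign.
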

%%%%%%%%%%%%%%%%%%%%%%%%%%%%%%%%%%%%%%%%%%%%%%%%%%%%%%%%%%%%%%%%%%%%%%%%%%%%%%%%%%%%%%%%%%%%%%%%%%%%

To minimize discussion of signs, replace all potentials $V(v)$ by their negatives, so that every entry in the generic
characteristic matrix $\defcolor{M(z,\lambda,e,V)}\vcentcolon= \lambda  {\textit Id}_n - H(z,e,V)$ is a positive
sum of monomials in $z,\lambda,e,V$ ({\it cf}.~\eqref{Eq:Floquet_Matrix}).
Then there is no cancellation in the expansion of any product of entries of $M(z,\lambda,e,V)$, nor in specializing $z$ to be
$\defcolor{\bOne}\vcentcolon=(1,\dotsc,1)\in\TT^d$.

We show that the determinant of a generic symmetric matrix is cancellation-free, use that to show that
$\det M(\bOne,\lambda,e,V)$ is cancellation-free, and finally deduce Proposition~\ref{P:GFMcancellation-free}.

Suppose that $M=(x_{i,j})_{i,j=1}^n$ is a generic symmetric matrix in that the $x_{i,j}$ are indeterminates and
$x_{i,j}=x_{j,i}$.
The expression for its determinant
\[
  \det M\ =\
  \sum_{w\in S_n}  \sgn(w) x_{1,w(1)} x_{2,w(2)} \dotsb x_{n,w(n)} =\
  \sum_{w\in S_n}  \sgn(w) M_w\ ,
\]
may have repeated monomial terms.
Collecting like terms gives a well-known, compact expression.
We derive it for completeness.

For $w\in S_n$, let \defcolor{$|w|$} be the graph on $\defcolor{[n]}=\{1,\dotsc,n\}$ with one edge $i\sim w(i)$
for each $i\in[n]$.
Every vertex of $|w|$ has degree two.
Fixed points of $w$ are loops and 2-cycles correspond to parallel edges.
Such a graph is a \demph{cycle cover of $[n]$}.
There are five cycle covers of $[3]$:
\[
 \begin{picture}(56,52)
   \put(0,0){\includegraphics{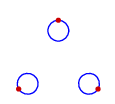}}
   \put(1,2){$3$} \put(25.2,44){$1$} \put(50,2){$2$}
 \end{picture}
 \qquad\quad
 \begin{picture}(56,52)
   \put(0,0){\includegraphics{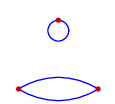}}
   \put(1,3){$3$} \put(25.2,44){$1$} \put(50,3){$2$}
 \end{picture}
 \qquad\quad
 \begin{picture}(56,52)
   \put(0,0){\includegraphics{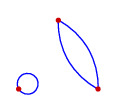}}
   \put(1,2){$3$} \put(25.2,44){$1$} \put(50,3){$2$}
 \end{picture}
 \qquad\quad
 \begin{picture}(56,52)
   \put(0,0){\includegraphics{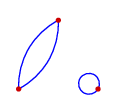}}
   \put(1,3){$3$} \put(25.2,44){$1$} \put(50,2){$2$}
 \end{picture}
 \qquad\quad
 \begin{picture}(56,52)
   \put(0,0){\includegraphics{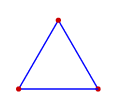}}
   \put(1,2){$3$} \put(25.2,44){$1$} \put(50,2){$2$}
 \end{picture}
\]
Note that the triangle occurs twice, once for each 3-cycle in $S_3$.

Let $c$ be a cycle cover of $[n]$ and let $m_i$ be its number of components of size $i$; this
is the number of $i$-cycles in $w$ when $|w|=c$.
Define \defcolor{$\sgn(c)$} to be $(-1)^m$, where $m$ is the number of components of even size in $c$ and
set $\defcolor{n_c}\vcentcolon = 2^{m_3+m_4+\dotsb}$.
We also set \defcolor{$M_c$} to be the product of $x_{i,j}$, where $i\sim j$ is an edge of $c$.

%%%%%%%%%%%%%%%%%%%%%%%%%%%%%%%%%%%%%%%%%%%%%%%%%%%%%%%%%%%%%%%%%%%%%%%%%%%%%%%%%%%%%%%%%%%%%%%%%%%%
\begin{Proposition}\label{P:Det_of_generic_symmetric_matrix}
  For any cycle cover $c$ of\/ $[n]$, 
  \begin{equation}\label{Eq:n_c} 
     n_c\ =\ \#\{w\in S_n \colon |w|=c\}\,.
  \end{equation}
  For $w\in S_n$, $\sgn(w)=\sgn(|w|)$.
  If $M$ is a symmetric matrix, then $M_w=M_{|w|}$, and 
  \[
  \det M\ =\ \sum_c \sgn(c)\, n_c\, M_c\,,
  \]
  the sum over all cycle covers of $[n]$.
\end{Proposition}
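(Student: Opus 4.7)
The plan is to verify the three preliminary assertions and then combine them. First I would analyze the fibers of the map $w\mapsto |w|$ from $S_n$ to cycle covers of $[n]$. Writing $w$ as a product of disjoint cycles, a fixed point $(i)$ contributes a loop at $i$, a $2$-cycle $(i,j)$ contributes a pair of parallel edges between $i$ and $j$, and a $k$-cycle $(i_1,i_2,\dotsc,i_k)$ with $k\geq 3$ contributes the $k$-cycle subgraph with edge set $\{i_j\sim i_{j+1\bmod k}\}$. The only element of $S_k$ acting on $\{i_1,\dotsc,i_k\}$ producing the same cycle subgraph is the reversal $(i_1,i_k,i_{k-1},\dotsc,i_2)$, and the reversal is distinct from the original precisely when $k\geq 3$. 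Hence each component of $c$ of size at least $3$ independently contributes a factor of $2$ to the fiber size, yielding $\#\{w\in S_n:|w|=c\} = 2^{m_3+m_4+\dotsb} = n_c$, which is~\eqref{Eq:n_c}.

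For the sign identity, a $k$-cycle has sign $(-1)^{k-1}$, so $\sgn(w)=(-1)^{\sum_i(k_i-1)}$ where the $k_i$ are the lengths of the disjoint cycles of $w$. Reducing modulo $2$, $k_i-1$ is odd exactly when $k_i$ is even, so $\sgn(w)=(-1)^{\#\{\text{even-length cycles of } w\}}=(-1)^{\#\{\text{even-size components of }|w|\}}=\sgn(|w|)$. The equality $M_w=M_{|w|}$ for a symmetric matrix is then immediate: the multiset of unordered pairs $\{i,w(i)\}$ is exactly the edge multiset of $|w|$, and symmetry $x_{i,j}=x_{j,i}$ means that $\prod_i x_{i,w(i)}$ depends only on this multiset.

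Combining these facts, I group the defining sum $\det M=\sum_{w\in S_n}\sgn(w)M_w$ according to the fibers of $w\mapsto |w|$. Each fiber over a cycle cover $c$ has size $n_c$, every $w$ in the fiber has $\sgn(w)=\sgn(c)$ and $M_w=M_c$, so the sum collapses to $\sum_c\sgn(c)\,n_c\,M_c$ as claimed.

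I do not anticipate any real obstacle here; the only point requiring care is the bookkeeping for components of size $1$ and $2$, where the fiber has a single preimage (loops and $2$-cycles are not reversed to give a new permutation) and $M_c$ picks up the squared entry $x_{i,j}^2$ or $x_{i,i}$ in the correct way from the multi-edge convention.
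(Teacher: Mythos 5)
Your proof is correct and follows essentially the same route as the paper's: both compute the fiber of $w\mapsto|w|$ by observing that each cycle of length $\geq 3$ can be independently replaced by its inverse (giving $2^{m_3+m_4+\dotsb}$ preimages), note that $\sgn(w)$ and $M_w$ depend only on $|w|$, and collapse the determinant sum over fibers. If anything you supply a little more detail than the paper on why $\sgn(w)=(-1)^{\#\{\text{even components}\}}$, which the paper leaves implicit.
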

%%%%%%%%%%%%%%%%%%%%%%%%%%%%%%%%%%%%%%%%%%%%%%%%%%%%%%%%%%%%%%%%%%%%%%%%%%%%%%%%%%%%%%%%%%%%%%%%%%%%
\begin{proof}
  For a cycle $\zeta = (i_1,i_2,\dotsc,i_r)$, set
  $\defcolor{x_\zeta}\vcentcolon= x_{i_1,i_2} x_{i_2,i_3}\dotsb x_{i_r,i_1}$.
  As $x_{i,j}=x_{j,i}$, we have that $x_\zeta=x_{\zeta^{-1}}$.
  Every permutation $w\in S_n$ is uniquely a product of disjoint cycles
  \[
     w\ =\ \zeta_1\cdot \zeta_2 \dotsb \zeta_k\,.
  \]
  Note that $M_w=x_{\zeta_1} x_{\zeta_2}\dotsb x_{\zeta_k}$.
  If $u=\zeta_1^{\pm}\zeta_2^{\pm}\dotsb\zeta_k^{\pm}$ (any choice of $\pm$), then $|u|=|w|$, $\sgn(u)=\sgn(w)$, and $M_u=M_w$.
  Moreover, if  $M_u=M_w$, then $u$ has this form.

  Since $\zeta=\zeta^{-1}$ if and only if the cycle has length less than 3, we deduce~\eqref{Eq:n_c}.
  The remaining statements follow from the observations that $\sgn(w)$ and $M_w$ only depend upon $|w|$.     
\end{proof}
%%%%%%%%%%%%%%%%%%%%%%%%%%%%%%%%%%%%%%%%%%%%%%%%%%%%%%%%%%%%%%%%%%%%%%%%%%%%%%%%%%%%%%%%%%%%%%%%%%%%

Let $M(\bOne, \lambda,e,V)$ be the specialization of the generic characteristic matrix at the point
$z=\bOne$.
Note that $M$ is symmetric.
Specializing the entries of the generic symmetric matrix to those of $M(\bOne, \lambda,e,V)$, we have the
formula of Proposition~\ref{P:Det_of_generic_symmetric_matrix},
 \begin{equation}\label{Eq:formula}
   \det M(\bOne, \lambda,e,V)\ =\ \sum_c \sgn(c)\, n_c \, M_c\,,
 \end{equation}
the sum over all cycle covers of $[n]$.
(Here, $M_c=M_c(\bOne, \lambda,e,V)$.)

%%%%%%%%%%%%%%%%%%%%%%%%%%%%%%%%%%%%%%%%%%%%%%%%%%%%%%%%%%%%%%%%%%%%%%%%%%%%%%%%%%%%%%%%%%%%%%%%%%%%
\begin{Lemma}\label{L:monomialsDetermine}
  For any cycle cover $c$, if $M_c\neq 0$, then any monomial in $M_c$ determines $c$.
\end{Lemma}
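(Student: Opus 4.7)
The plan is to reconstruct the cycle cover $c$ explicitly from any monomial $m$ appearing in $M_c$, by reading off the multiset of edges of $c$ and then invoking the uniqueness of the cycle decomposition of a $2$-regular multigraph.

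First I would record which indeterminates can occur in which entries of $M(\bOne,\lambda,e,V)$: the variable $\lambda$ appears only in diagonal entries, each potential $V(v)$ appears only in the diagonal entry $(v,v)$, and each edge parameter $e_{(v,\alpha+u)}$ appears only in the two entries $(v,u)$ and $(u,v)$ (cf.\ Remark~\ref{Rem:Sparse}). In particular, every edge-parameter indeterminate is tagged by a unique unordered pair $\{v,u\}\subseteq W$, where $v=u$ is allowed and corresponds to an edge from $v$ to a translate of itself.

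Now let $m$ be a monomial in $M_c=\prod_{i=1}^n M_{i,w(i)}(\bOne,\lambda,e,V)$, obtained by choosing one summand from each of the $n$ factors. For every unordered pair $\{v,u\}$ with $v\neq u$, let $N_{\{v,u\}}(m)$ be the total number of factors of $m$ of the form $e_{(v,\alpha+u)}$ for any $\alpha$. Each such factor is contributed by a position $(v,u)$ or $(u,v)$ in the product, and every non-loop position of $c$ contributes exactly one non-loop edge-parameter factor to $m$, so $N_{\{v,u\}}(m)$ equals the multiplicity of the unordered edge $\{v,u\}$ in $c$; this multiplicity is $0$, $1$, or $2$ (no edge, a single edge in a cycle of length $\geq 3$, or a $2$-cycle on $\{v,u\}$). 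Because every vertex of a cycle cover has degree exactly two, a vertex $v$ has a loop in $c$ if and only if $\sum_{u\neq v} N_{\{v,u\}}(m)=0$, so the loops are determined by $m$ as well. Altogether, the edge multiset of $c$ is read off from $m$.

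It remains to observe that this edge multiset determines $c$ as a cycle cover: loops are explicit, pairs with multiplicity $2$ constitute the $2$-cycles, and the remaining edges of multiplicity $1$ form a simple $2$-regular graph on the remaining vertices whose connected components are uniquely the cycles of length $\geq 3$. The one subtle point to watch is the identification $e_{(v,\alpha+u)}=e_{(u,-\alpha+v)}$, which prevents us from deciding whether a given edge-parameter factor of $m$ originated at position $(v,u)$ or at position $(u,v)$; this ambiguity is harmless, since both positions correspond to the same unordered edge of $c$ and only the edge multiplicity matters for the reconstruction.
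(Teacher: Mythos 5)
Your argument is correct and follows the same strategy as the paper: both rest on the observation from Remark~\ref{Rem:Sparse} that every edge-parameter (and every potential) indeterminate is tagged by a unique unordered pair of vertices, so the monomial exposes the non-loop edges of $c$ and the loops follow by elimination. The paper states this in two sentences and leaves the reconstruction of $c$ implicit; you unpack that final step via the edge multiset and the $2$-regularity of a cycle cover, which adds useful detail but is not a different method.
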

%%%%%%%%%%%%%%%%%%%%%%%%%%%%%%%%%%%%%%%%%%%%%%%%%%%%%%%%%%%%%%%%%%%%%%%%%%%%%%%%%%%%%%%%%%%%%%%%%%%%
\begin{proof}
  Suppose that $w\in S_n$ has $|w|=c$, and the polynomial $M_c=M_w$ is nonzero.
  Writing $M(\bOne, \lambda,e,V)=(f_{i,j}(\lambda,e,V))_{i,j=1}^n$, we have
  \[
     M_w\ =\ f_{1,w(1)}(\lambda,e,V)\, f_{2,w(2)}(\lambda,e,V)\, \dotsb\, f_{n,w(n)}(\lambda,e,V)\, .
  \]
  By Remark~\ref{Rem:Sparse}, each term in $f_{i,j}(\lambda,e,V)$ (except $\lambda$ when $i=j$) determines the
  unordered pair $\{i,j\}$.
  Thus, each monomial in $M_w$ determines all unordered pairs $\{i,w(i)\}$ when $i\neq w(i)$.
  But this determines $c=|w|$.
\end{proof}
%%%%%%%%%%%%%%%%%%%%%%%%%%%%%%%%%%%%%%%%%%%%%%%%%%%%%%%%%%%%%%%%%%%%%%%%%%%%%%%%%%%%%%%%%%%%%%%%%%%%

%%%%%%%%%%%%%%%%%%%%%%%%%%%%%%%%%%%%%%%%%%%%%%%%%%%%%%%%%%%%%%%%%%%%%%%%%%%%%%%%%%%%%%%%%%%%%%%%%%%%
\begin{Corollary}\label{C:Cancellation-free}
  $\det M(\bOne,\lambda,e,V)$ is cancellation-free.
\end{Corollary}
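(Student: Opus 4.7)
The plan is to deduce the corollary directly from Proposition~\ref{P:Det_of_generic_symmetric_matrix} and Lemma~\ref{L:monomialsDetermine}, with the preliminary sign normalization made in the paragraph right after Proposition~\ref{P:GFMcancellation-free}. Recall that after replacing each potential $V(v)$ by its negative, every entry $f_{i,j}$ of $M(\bOne,\lambda,e,V)$ is a sum of monomials in $\lambda,e,V$ with \emph{positive} coefficients. Hence, for every cycle cover $c$ of $[n]$, the polynomial $M_c = M_c(\bOne,\lambda,e,V)$, being a product of such $f_{i,j}$'s, is itself a sum of monomials with positive coefficients. In particular, no internal cancellation occurs inside any individual $M_c$, and multiplying by the nonzero integer $\sgn(c)\, n_c$ does not shrink its support.

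Next I would apply Proposition~\ref{P:Det_of_generic_symmetric_matrix} (with entries specialized to those of the symmetric matrix $M(\bOne,\lambda,e,V)$) to write
\[
   \det M(\bOne,\lambda,e,V)\ =\ \sum_c \sgn(c)\, n_c\, M_c,
\]
the sum over cycle covers $c$ of $[n]$. The central claim is then that the monomial supports of the various $M_c$'s are pairwise disjoint: if $c\neq c'$ are cycle covers with $M_c,M_{c'}\neq 0$, then no monomial of $M_c$ coincides with any monomial of $M_{c'}$. This is exactly Lemma~\ref{L:monomialsDetermine}: any monomial appearing in $M_c$ determines $c$, so it cannot appear in $M_{c'}$.

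Combining the two observations, the support of $\det M(\bOne,\lambda,e,V)$ is the disjoint union of the supports of $M_c$ over all cycle covers with $M_c\neq 0$. Now take any $w\in S_n$ with $M_w\neq 0$. By Proposition~\ref{P:Det_of_generic_symmetric_matrix}, $M_w = M_{|w|}$, and hence its support is the support of $M_{|w|}$, which by the previous sentence is contained in the support of $\det M(\bOne,\lambda,e,V)$. This is precisely the statement that $\det M(\bOne,\lambda,e,V)$ is cancellation-free.

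I don't expect any serious obstacle: the only thing to watch is the sign normalization (to guarantee positivity of coefficients inside each $M_c$, and hence no accidental cancellations within a single cycle cover), together with a careful use of Lemma~\ref{L:monomialsDetermine} to rule out cancellations between different cycle covers. After Corollary~\ref{C:Cancellation-free} is in hand, Proposition~\ref{P:GFMcancellation-free} will follow by a similar argument, noting that specializing $z$ to $\bOne$ can only collapse the support of a product of entries, so any cancellation in $\det M(z,\lambda,e,V)$ would already manifest itself in $\det M(\bOne,\lambda,e,V)$.
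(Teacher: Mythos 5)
Your proposal is correct and takes essentially the same route as the paper: sign-normalize to get positive coefficients, apply Proposition~\ref{P:Det_of_generic_symmetric_matrix} to collect terms by cycle cover, and use Lemma~\ref{L:monomialsDetermine} to rule out cancellation across distinct cycle covers. You are actually slightly more explicit than the paper's terse proof in invoking Lemma~\ref{L:monomialsDetermine} to get disjointness of the supports of the $M_c$, but this is exactly the intended argument.
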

%%%%%%%%%%%%%%%%%%%%%%%%%%%%%%%%%%%%%%%%%%%%%%%%%%%%%%%%%%%%%%%%%%%%%%%%%%%%%%%%%%%%%%%%%%%%%%%%%%%%
\begin{proof}
  Every entry of $M(\bOne,\lambda,e,V)$ has positive coefficients, so there are no cancellations in the expansion of any
  non-zero product $M_w$.
  The result follows from the formula~\eqref{Eq:formula} and Proposition~\ref{P:Det_of_generic_symmetric_matrix} as
  $M_w=M_{|w|}$.
\end{proof}
%%%%%%%%%%%%%%%%%%%%%%%%%%%%%%%%%%%%%%%%%%%%%%%%%%%%%%%%%%%%%%%%%%%%%%%%%%%%%%%%%%%%%%%%%%%%%%%%%%%%

%%%%%%%%%%%%%%%%%%%%%%%%%%%%%%%%%%%%%%%%%%%%%%%%%%%%%%%%%%%%%%%%%%%%%%%%%%%%%%%%%%%%%%%%%%%%%%%%%%%%
\begin{proof}[Proof of Proposition~{\protect \ref{P:GFMcancellation-free}}]
  Each term of $M(z,\lambda,e,V)_w$ has underlying monomial $\alpha(\lambda,e,V) z^{a_w}$ for some monomial
  $\alpha(\lambda,e,V)$
  of $M(\bOne,\lambda,e,V)_w$ and exponent $a_w\in\ZZ^d$.
  By Lemma~\ref{L:monomialsDetermine}, the monomial $\alpha(\lambda,e,V)$ only occurs in $M(\bOne,\lambda,e,V)_u$ for
  $|u|=|w|$.
  Thus, the terms in $\det M(z,\lambda,e,V)$ involving the monomial $\alpha(\lambda,e,V)$ have the form
  \begin{equation}\label{Eq:SameSign}
     \sgn(w) \, \alpha(\lambda,e,V) \sum_{|u|=|w|} z^{a_u}\,.\qedhere
  \end{equation}
\end{proof}  
%%%%%%%%%%%%%%%%%%%%%%%%%%%%%%%%%%%%%%%%%%%%%%%%%%%%%%%%%%%%%%%%%%%%%%%%%%%%%%%%%%%%%%%%%%%%%%%%%%%%

By~\eqref{Eq:SameSign} something stronger holds.
Any monomial in $M(z,\lambda,e,V)_w$ has the same sign in any other $M(z,\lambda,e,V)_u$, and these are the same sign as
that monomial in $\det M(z,\lambda,e,V)$.

%%%%%%%%%%%%%%%%%%%%%%%%%%%%%%%%%%%%%%%%%%%%%%%%%%%%%%%%%%%%%%%%%%%%%%%%%%%%%%%%%%%%%%%%%%%%%%%%%%%%
%
\subsection{Initial matrix}\label{S:initial_matrix}
We continue with the notation of Section~\ref{S:cancellation-free}.
The \demph{Newton polytope \defcolor{$\calN(\Gamma)$} of the graph} $\Gamma$ is the Newton polytope of the generic
dispersion polynomial $\Phi$, as a polynomial in $z,\lambda$.
This was defined in~\cite[Sect.~4.1]{FS}, where it was shown that the Newton polytope of the dispersion polynomial of any
operator on $\Gamma$ is a subset of $\calN(\Gamma)$.

Let $\eta\in\ZZ^{d+1}$ and \defcolor{$F$} be the face of $\calN(\Gamma)$ that it exposes.
For $w\in S_n$, let $M_w$ be the corresponding term in $\det M$.
Then \demph{$w$ contributes to} $\ini_\eta\Phi$ if $M_w$ has terms whose monomials lie
along the face $F$.
We define the \demph{initial matrix} $\defcolor{\ini_\eta M}= \ini_\eta M(z,\lambda,e,V)$ by its entries
 \begin{equation}\label{Eq:InitialMatrix}
  \bigl(\ini_\eta M\bigr)_{i,j}\ \vcentcolon=\  \left\{
     \begin{array}{rcl} \ini_\eta (M_{i,j}) & &\mbox{if $j=w(i)$ for some $w\in S_n$ contributing to $\ini_\eta\Phi$}\\
       0 &&\mbox{otherwise} \end{array}\right..
 \end{equation}
%
%%%%%%%%%%%%%%%%%%%%%%%%%%%%%%%%%%%%%%%%%%%%%%%%%%%%%%%%%%%%%%%%%%%%%%%%%%%%%%%%%%%%%%%%%%%%%%%%%%%%
\begin{Remark}\label{Rem:initialMatrix}
  Note that the initial matrix $\ini_\eta M$ is {\it not}  necessarily the matrix of
  $\eta$-initial forms of
  entries of $M$, but rather a submatrix  of it.
  This is illustrated in Example~\ref{Ex:hexPlus}.
\end{Remark}
%%%%%%%%%%%%%%%%%%%%%%%%%%%%%%%%%%%%%%%%%%%%%%%%%%%%%%%%%%%%%%%%%%%%%%%%%%%%%%%%%%%%%%%%%%%%%%%%%%%%

%%%%%%%%%%%%%%%%%%%%%%%%%%%%%%%%%%%%%%%%%%%%%%%%%%%%%%%%%%%%%%%%%%%%%%%%%%%%%%%%%%%%%%%%%%%%%%%%%%%%
\begin{Lemma}\label{Lem:initialMatrixOK}
  The initial matrix $\ini_\eta M$ only depends on the face of $\calN(\Gamma)$ exposed by $\eta$, and 
  \begin{equation}\label{Eq:initialDeterminant}
     \det \ini_\eta M(z,\lambda,e,V)\ =\ \ini_\eta \Phi\,.
  \end{equation}
\end{Lemma}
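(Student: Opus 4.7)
The plan is to expand both $\ini_\eta \Phi$ and $\det \ini_\eta M$ as sums over $S_n$ and match them term by term. Set $r := \min_\eta \calN(\Gamma)$ and $\mu_{i,j} := \min_\eta \calN(M_{i,j})$. By Proposition~\ref{P:GFMcancellation-free} the expansion $\Phi = \sum_w \sgn(w) M_w$ has no cancellation, so $\calN(M_w) \subseteq \calN(\Phi) = \calN(\Gamma)$ for every $w$. Since $\calN(M_w)$ is the Minkowski sum $\sum_i \calN(M_{i,w(i)})$, its $\eta$-minimum equals $\sum_i \mu_{i,w(i)}$, and is therefore $\geq r$, with equality exactly when $w$ contributes to $\ini_\eta \Phi$. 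Cancellation-freeness guarantees that only the minimum-weight $M_w$'s contribute to the minimum-weight part of $\Phi$, and $\ini_\eta$ is multiplicative on products, giving
\[
  \ini_\eta \Phi \;=\; \sum_{w\text{ contributes}} \sgn(w)\, \ini_\eta M_w \;=\; \sum_{w\text{ contributes}} \sgn(w) \prod_i \ini_\eta M_{i,w(i)}\,.
\]

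On the other hand, the Leibniz expansion gives $\det \ini_\eta M = \sum_w \sgn(w) \prod_i (\ini_\eta M)_{i,w(i)}$. For any contributing $w$, each entry $(\ini_\eta M)_{i,w(i)}$ equals $\ini_\eta M_{i,w(i)}$ by definition, reproducing a term of the formula above. The heart of the proof is to show that every non-contributing $w$ makes the corresponding product vanish. This amounts to the combinatorial assertion: in the bipartite weighted graph on $[n]\sqcup[n]$ with edge-weights $\mu_{i,j}$, any perfect matching that uses only edges appearing in some minimum-weight matching is itself of minimum weight. I would deduce this from LP duality for the assignment problem, which provides potentials $p_i, q_j$ satisfying $p_i + q_j \leq \mu_{i,j}$ everywhere with equality on every active edge; any such $w$ then satisfies $\sum_i \mu_{i,w(i)} = \sum_i p_i + \sum_i q_i = r$ and therefore contributes. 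Contrapositively, a non-contributing $w$ must use a position $(i,w(i))$ with $(\ini_\eta M)_{i,w(i)} = 0$, killing the Leibniz term and yielding $\det \ini_\eta M = \ini_\eta \Phi$.

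Face-independence follows from two observations. First, the set of contributing permutations depends only on $F$, since $w$ contributes iff $\calN(M_w) \cap F \neq \emptyset$. Second, for any active position $(i,j)$ --- say $j = w(i)$ for a contributing $w$ --- the decomposition $F \cap \calN(M_w) = \sum_{i'} F_{i',w(i')}$ is uniquely determined (faces of a Minkowski sum decompose uniquely as sums of faces of the summands), so the face $F_{i,j}$ supporting $\ini_\eta M_{i,j}$ depends only on $F$ (for $\eta$ in the relative interior of the normal cone of $F$). The principal obstacle will be the combinatorial lemma in the second paragraph: recognizing that ``$(i,j)$ is active'' coincides with the tight-edge condition for the assignment LP, so that LP duality applies. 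Everything else is routine bookkeeping with multiplicativity of initial forms and the cancellation-free expansion of $\det M$.
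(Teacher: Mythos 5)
Your argument is correct, and it takes a genuinely different route from the paper's on the central combinatorial step. The paper also reduces to the claim that any $\tau$ with $N_\tau\neq 0$ satisfies $\eta(N_\tau)=\eta(\Phi)$, but establishes it by forming the nonnegative integer matrix $\Pi=\sum_i\Pi_{w_i}-\Pi_\tau$ (row and column sums $n{-}1$), decomposing it as a sum of $n{-}1$ permutation matrices via Birkhoff's theorem, and then sandwiching $n\,\eta(\Phi)$ between two equal quantities using additivity of $\widehat\eta$. You instead invoke LP duality for the assignment problem: an optimal dual potential $(p,q)$ is tight on every active edge, so a matching supported on active edges automatically has weight $\sum p_i+\sum q_j=r$. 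These are two views of the same polyhedral fact --- Birkhoff's theorem gives the integrality of the assignment polytope, and complementary slackness is its dual expression --- but your version outsources the sandwich argument to LP strong duality, while the paper's version is more self-contained (Birkhoff follows from Hall's theorem, which the paper flags). For the face-independence part you also diverge: the paper argues indirectly that two initial matrices exposing the same face cannot differ because that would contradict cancellation-freeness, whereas you give the cleaner explicit reason that the face $F\cap\calN(M_w)$ of the Minkowski sum $\calN(M_w)=\sum_{i'}\calN(M_{i',w(i')})$ decomposes uniquely as a sum of faces of the summands, and this decomposition is pinned down by $F$ alone. Both are valid; your Minkowski argument is arguably more transparent, and one should note that for a contributing $w$, the vector $\eta$ necessarily exposes $F\cap\calN(M_w)$ on $\calN(M_w)$ (since $\calN(M_w)\subseteq\calN(\Gamma)$ and the $\eta$-minimum is attained), which is what makes the unique-decomposition fact applicable.
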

%%%%%%%%%%%%%%%%%%%%%%%%%%%%%%%%%%%%%%%%%%%%%%%%%%%%%%%%%%%%%%%%%%%%%%%%%%%%%%%%%%%%%%%%%%%%%%%%%%%%
\begin{proof}
  Let $\eta\in\ZZ^{d+1}$ expose the face $F$ of $\calN(\Phi)$ and let $\defcolor{N}\vcentcolon=\ini_\eta M(z,\lambda,e,V)$
  be the initial matrix.
  We first prove~\eqref{Eq:initialDeterminant} and then deduce its independence from $\eta$.

  For a nonzero polynomial $f$ in $z,\lambda$, let \defcolor{$\eta(f)$} be the minimum value that the linear form $\eta$
  takes on the exponents of $f$ and $+\infty$ if $f=0$.
  Then $\ini_\eta f$ is the sum of terms whose exponent vectors $\alpha$ minimize $\eta$, in that $\eta\cdot\alpha=\eta(f)$.
  Note how these behave under multiplication,
  $\eta(f\cdot g)= \eta(f)+\eta(g)$ and $\ini_\eta(f\cdot g) = \ini_\eta f \cdot \ini_\eta g $.
  Consequently, for $w\in S_n$,
  \[
  \ini_\eta(M_w)\ =\ \prod_{i=1}^n \ini_\eta( M_{i,w(i)})
  \qquad\mbox{and}\qquad
  \eta(M_w)\ =\ \sum_{i=1}^n \eta(M_{i,w(i)})\,.
  \]
  As $\det M=\Phi$ is cancellation-free and $\eta(\Phi)$ is the minimum of $\eta$ on $\calN(\Phi)$,
  and each entry $N_{i,j}$ of the initial matrix $N=\ini_\eta M$ is either zero or is equal to $\ini_\eta(M_{i,j})$, we have 
  \begin{equation}\label{Eq:cineq}
  \eta(\Phi)\ \leq\ \eta(M_w)\ \leq\ \eta(N_w)\,,
  \end{equation}
  with equality if and only if $w$ contributes to $\ini_\eta\Phi$.
  In that case, observe that
  \[
     \ini_\eta( M_{i,w(i)})\ =\ N_{i,w(i)}\ =\ \ini_\eta(N_{i,w(i)}) \qquad\mbox{for }i\in [n]\,.
  \]

  We prove~\eqref{Eq:initialDeterminant} by showing that  for $\tau\in S_n$ with $N_\tau\neq 0$, we have
  \[
     \eta(\Phi)\ =\ \sum_{i=1}^n \eta(N_{i,\tau(i)})\ =\ \eta(N_\tau)\,.
  \]

  Suppose that $\tau\in S_n$ and $N_\tau\neq 0$.
  In particular, for each $i\in[n]$, $0\neq N_{i,\tau(i)}=\ini_\eta(M_{i,\tau(i)})$.
  By the definition of $N=\ini_\eta M$, for each $i=1,\dotsc,n$, there is a permutation $w_i$ such that $w_i(i)=\tau(i)$
  and $w_i$ contributes to $\ini_\eta\Phi$.
  
  For each permutation $w\in S_n$, let \defcolor{$\Pi_w$} be the corresponding permutation matrix and set
  \[
     \Pi\ \vcentcolon=\ \sum_{i=1}^n \Pi_{w_i}\ -\ \Pi_\tau\,,
  \]
  which is a matrix with nonnegative integer entries, and whose row and column sums are each $n{-}1$.
  By Birkhoff's Theorem~\cite{B46} (a consequence of Hall's Theorem on matchings), there are permutations
  $\rho_1,\dotsc,\rho_{n-1}$ such that
  \begin{equation}\label{Eq:n-1}
      \Pi\ =\ \sum_{i=1}^{n-1}\Pi_{\rho_i}\,.
  \end{equation}

  Consider the additive map \defcolor{$\widehat{\eta}$} on the set of nonnegative matrices $X=(x_{i,j})_{i,j=1}^n$ defined by
  \[
  \widehat{\eta}\ \colon\ X\ \longmapsto\ \sum_{i,j} x_{i,j} \cdot \eta( N_{i,j} )\ \in\ \RR_{\geq 0}\cup\{\infty\}\,.
  \]
  Then $\widehat{\eta}(\Pi_w)=\eta(N_w)$ for any permutation $w\in S_n$.
  This, together with the definition of $\eta$, the inequality~\eqref{Eq:cineq}, and expressions for $\Pi$ give
  \begin{multline*}
          n\cdot \eta(\Phi)\
    \leq\ \eta( N_\tau)\ +\ \sum_{i=1}^{n-1}\eta(N_{\rho_i})\
     =\  \widehat{\eta}(\Pi_\tau)\ +\ \sum_{i=1}^{n-1}\widehat{\eta}(\Pi_{\rho_i})\ 
     =\  \widehat{\eta}(\Pi_\tau)\ +\ \widehat{\eta}\Bigl(\sum_{i=1}^{n-1}\Pi_{\rho_i}\Bigr)\\
     =\  \widehat{\eta}( \Pi_\tau + \Pi)\
     =\  \widehat{\eta}\Bigr( \sum_{i=1}^n \Pi_{w_i}\Bigl)\ 
    =\  \sum_{i=1}^n \widehat{\eta}(\Pi_{w_i})\ 
     =\  \sum_{i=1}^{n}\eta(N_{w_i})\ =\ n\cdot \eta(\Phi)\,.
  \end{multline*}
  This implies in particular that $\eta(\Phi)$ is equal to $\eta( N_\tau)$ and completes the proof
  of~\eqref{Eq:initialDeterminant}.
  By~\eqref{Eq:initialDeterminant}, $\det\ini_\eta M(z,\lambda,e,V)$ depends on the face $F$ of $\calN(\Gamma)$ exposed
  by $\eta$.
  By Remark~\ref{Rem:initialMatrix}, the initial matrix $\ini_\eta M(z,\lambda,e,V)$ also only
  depends on $F$.

  Finally, note that the initial matrix $\ini_\eta M$ depends only on the face exposed by $\eta$ and is otherwise independent
  of $\eta \in \ZZ^{d+1}$.
  This follows from~\eqref{Eq:InitialMatrix}, as the only way $\ini_\eta M$ and  $\ini_{\eta'} M$ could differ is if two nonzero
  entries differ, but this contradicts Corollary~\ref{C:Cancellation-free}.  
\end{proof}
%%%%%%%%%%%%%%%%%%%%%%%%%%%%%%%%%%%%%%%%%%%%%%%%%%%%%%%%%%%%%%%%%%%%%%%%%%%%%%%%%%%%%%%%%%%%%%%%%%%%

%%%%%%%%%%%%%%%%%%%%%%%%%%%%%%%%%%%%%%%%%%%%%%%%%%%%%%%%%%%%%%%%%%%%%%%%%%%%%%%%%%%%%%%%%%%%%%%%%%%%
%
\subsection{The initial graph}\label{SS:initial_graph}

The arguments of Sections~\ref{S:cancellation-free} and~\ref{S:initial_matrix}, while ostensibly algebraic, 
have a purely combinatorial interpretation in terms of the graph $\Gamma$.
Thus, $\ini_\eta\Phi$ and ultimately the correction terms $\Nvert$ and $\Ndisc$ of Theorem~\ref{Th:main} are purely combinatorial and reflect
structural properties of  $\Gamma$.

We  construct a labeled, directed multigraph  $\hGamma$ from $\Gamma$ whose vertices are $W$ and whose (weighted) adjacency matrix 
is $M(z,\lambda,e,V)$.
The dispersion polynomial $\Phi$ is a cancellation-free sum of monomials indexed by {\it directed} cycle covers of $\hGamma$.
For $\eta\in\ZZ^{d+1}$, we define a subgraph $\ini_\eta\hGamma$ of $\hGamma$ whose adjacency matrix is $\ini_\eta M$.
Structural properties of this initial graph  $\ini_\eta\hGamma$ determine the correction terms
$\Nvert$ and $\Ndisc$ of Theorem~\ref{Th:main}.

%%%%%%%%%%%%%%%%%%%%%%%%%%%%%%%%%%%%%%%%%%%%%%%%%%%%%%%%%%%%%%%%%%%%%%%%%%%%%%%%%%%%%%%%%%%%%%%%%%%%
\begin{Definition}\label{De:hatGamma}
  Given a $\ZZ^d$-periodic labeled graph $\Gamma$ with fundamental domain $W$, let \defcolor{$\hGamma$} be the graph
  with vertex set $W$, and labeled, directed edges as follows.
  \begin{enumerate}[$(i)$]
    
    \item For $v\in W$, $\hGamma$ has two loops at $v$, one labeled $\lambda$ and the other labeled $-V(v)$.
   
    \item For each $u,v\in W$ and $\alpha\in\ZZ^d$, if $v\sim \alpha{+}u$ is an edge of $\Gamma$, then there is a
      directed edge $v\leftarrow u$ in $\hGamma$ with label $e_{(v,\alpha{+}u)}z^\alpha$.

  \end{enumerate}
  Observe that $\hGamma$ has loops and parallel edges.
  Each vertex $v\in W$ has degree $4+2 d_v$ in $\hGamma$, where $d_v$ is the degree of $v$ in $\Gamma$.
\end{Definition}
%%%%%%%%%%%%%%%%%%%%%%%%%%%%%%%%%%%%%%%%%%%%%%%%%%%%%%%%%%%%%%%%%%%%%%%%%%%%%%%%%%%%%%%%%%%%%%%%%%%%
\begin{Example}\label{Ex:Hexagon_quotient}
  For the hexagonal lattice $\Gamma$ of Figure~\ref{F:localHexagon}, $\hGamma$ is the following graph.
\[
  \raisebox{-47pt}{\begin{picture}(276,103)(-138,-50)
   \put(-113,-50){\includegraphics{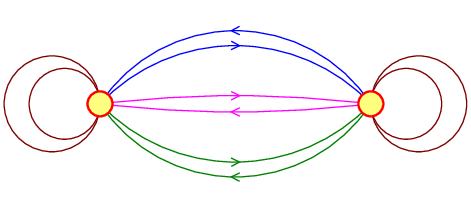}}
   \put( -68, -2.3){$u$}       \put( 63,  -2.3){$v$}
   \put( -95, -2.3){$\lambda$} \put( 88,  -2.3){$\lambda$}
   \put(-105,  28){$-V(u)$}    \put( 75,  28){$-V(v)$}
   \put( -10,  7){$a$}         \put(-10, -11){$a$}
   \put(   7, 39){$cy^{-1}$}    \put(  7,  19){$cy$}
   \put(   7,-46){$bx^{-1}$}    \put(  7, -23){$bx$}
  \end{picture}}
  \qedhere
\]
\end{Example}
%%%%%%%%%%%%%%%%%%%%%%%%%%%%%%%%%%%%%%%%%%%%%%%%%%%%%%%%%%%%%%%%%%%%%%%%%%%%%%%%%%%%%%%%%%%%%%%%%%%%
\begin{Remark}
  The graph $\hGamma$ is  independent of the choice of $W$.
  Replacing $W$ by the set of $\ZZ^d$-orbits of vertices in Definition~\ref{De:hatGamma} gives the same graph,
  {\it mutatis mutandis}.
  In fact, $\hGamma$ is the quotient by $\ZZ^d$ of a directed version of $\Gamma$ and $\Gamma$ may be recovered from
  $\hGamma$ by removing the loops labeled $\lambda$, reversing signs, and then taking the underlying undirected
  graph of an appropriate cover in the sense of Sunada~\cite{Sunada}.
  (We leave the details to the reader.)
\end{Remark}  
%%%%%%%%%%%%%%%%%%%%%%%%%%%%%%%%%%%%%%%%%%%%%%%%%%%%%%%%%%%%%%%%%%%%%%%%%%%%%%%%%%%%%%%%%%%%%%%%%%%%
\begin{Definition}
  The \demph{adjacency matrix} of a finite, directed, labeled multigraph $G$ with vertex set $\calV$ is the
  $\calV\times\calV$ matrix
  \defcolor{$\Ad_G$} whose entry in position $(v,u)\in\calV\times\calV$ is
  \[
      \bigl( \Ad_G\bigr)_{v,u}\ =\ \sum_{v\leftarrow u} e_{v\leftarrow u}\,,
  \]
   where $e_{v\leftarrow u}$ is the label of the edge $v\leftarrow u$ in $G$.
\end{Definition}
%%%%%%%%%%%%%%%%%%%%%%%%%%%%%%%%%%%%%%%%%%%%%%%%%%%%%%%%%%%%%%%%%%%%%%%%%%%%%%%%%%%%%%%%%%%%%%%%%%%%

The following observation is immediate from these  definitions.

%%%%%%%%%%%%%%%%%%%%%%%%%%%%%%%%%%%%%%%%%%%%%%%%%%%%%%%%%%%%%%%%%%%%%%%%%%%%%%%%%%%%%%%%%%%%%%%%%%%%
\begin{Proposition}
  Let $\Gamma$ be a $\ZZ^d$-periodic graph with Floquet matrix $H(z)$.
  Then
  \[
     \Ad_{\hGamma}\ =\ \lambda I_W\ -\ H(z)\,,
  \]
  the characteristic matrix of $\Gamma$.
\end{Proposition}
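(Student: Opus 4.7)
The plan is to verify the claimed identity of $W \times W$ matrices entry by entry, simply unfolding Definition~\ref{De:hatGamma} of $\hGamma$ and the formula~\eqref{Eq:Floquet_Matrix} for $H(z)$. Both sides are indexed by $W \times W$, so it suffices to fix vertices $v,u \in W$ and compare the $(v,u)$-entries, splitting into the off-diagonal case $v \neq u$ and the diagonal case $v = u$.

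For $v \neq u$, the formula~\eqref{Eq:Floquet_Matrix} gives $(\lambda I_W - H(z))_{v,u} = \sum_{v \sim \alpha+u} e_{(v,\alpha+u)}\, z^\alpha$, since the $\delta_{v,u}$ and $\lambda$ terms vanish. On the other hand, by part $(ii)$ of Definition~\ref{De:hatGamma}, the directed edges $v \leftarrow u$ in $\hGamma$ are precisely in bijection with $\alpha \in \ZZ^d$ such that $v \sim \alpha + u$ in $\Gamma$, with labels $e_{(v,\alpha+u)} z^\alpha$. Summing these labels gives exactly the same expression, so the two $(v,u)$-entries agree.

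For $v = u$, formula~\eqref{Eq:Floquet_Matrix} gives $(\lambda I_W - H(z))_{v,v} = \lambda - V(v) + \sum_{v \sim \alpha+v} e_{(v,\alpha+v)}\, z^\alpha$, where the sum runs over all self-edges at $v$ in $\Gamma$ (i.e.\ all $\alpha$ with $v \sim \alpha + v$). On the $\hGamma$ side, the diagonal entry $(\Ad_{\hGamma})_{v,v}$ is a sum over all loops at $v$. By $(i)$ of Definition~\ref{De:hatGamma}, two such loops are labeled $\lambda$ and $-V(v)$, contributing $\lambda - V(v)$. By $(ii)$, any edge $v \sim \alpha + v$ of $\Gamma$ contributes an additional loop $v \leftarrow v$ with label $e_{(v,\alpha+v)} z^\alpha$. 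Summing these matches the diagonal entry of $\lambda I_W - H(z)$.

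There is essentially no obstacle here; this is a bookkeeping argument designed to show that Definition~\ref{De:hatGamma} was set up so the adjacency matrix reproduces the characteristic matrix. The only point requiring a bit of attention is treating self-loops of $\Gamma$ (i.e.\ edges of the form $v \sim \alpha + v$) consistently, since they contribute to the diagonal in both descriptions and must not be double-counted. Once the case split $v=u$ versus $v\neq u$ is made explicit, the verification is a direct comparison of labels.
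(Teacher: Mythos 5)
Your proof is correct and follows the same route the paper has in mind: the paper simply declares the proposition ``immediate from these definitions,'' and your entry-by-entry comparison of $(\Ad_{\hGamma})_{v,u}$ with $(\lambda I_W - H(z))_{v,u}$, split into $v=u$ and $v\neq u$, is exactly what that remark is pointing at. One small terminological slip: the edges you call ``self-loops of $\Gamma$'' or ``self-edges,'' i.e.\ those of the form $v\sim\alpha+v$, are not loops in $\Gamma$ at all---since $\Gamma$ is simple and $\ZZ^d$ acts freely, such an edge forces $\alpha\neq 0$ and joins two distinct vertices of $\Gamma$; they only become loops at the vertex $v$ in the quotient $\hGamma$. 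This does not affect the verification, since your sums over $\alpha$ with $v\sim\alpha+v$ correctly match on both sides, but the phrasing could confuse a reader into thinking $\Gamma$ itself has loops, contrary to the paper's standing assumption.
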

%%%%%%%%%%%%%%%%%%%%%%%%%%%%%%%%%%%%%%%%%%%%%%%%%%%%%%%%%%%%%%%%%%%%%%%%%%%%%%%%%%%%%%%%%%%%%%%%%%%%

Thus, each labeled edge of $\hGamma$ corresponds to a monomial in an entry of the characteristic matrix $M(z,\lambda,e,V)$.
A monomial term in the expansion of $\det(M(z,\lambda,e,V))$
corresponds to a permutation and
a choice of monomials, one from each of the appropriate entries of $M(z,\lambda,e,V)$.
This collection of edges corresponds to a (directed) cycle cover $\zeta$ of the graph $\hGamma$ and the monomial is the product
\defcolor{$\wt(\zeta)$} of the labels of the directed edges of $\zeta$.

%%%%%%%%%%%%%%%%%%%%%%%%%%%%%%%%%%%%%%%%%%%%%%%%%%%%%%%%%%%%%%%%%%%%%%%%%%%%%%%%%%%%%%%%%%%%%%%%%%%%
\begin{Example}
  The graph $\hGamma$ of Example~\ref{Ex:Hexagon_quotient} has 13 cycle covers.
  We display four of them, together with their monomials.
  \[
  \begin{picture}(107,67)(3,-8)
    \put(0,0){\includegraphics{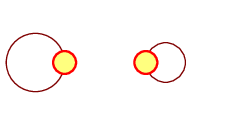}}
    \put(27.8,27.5){$u$}    \put(67.5,27.5){$v$}
    \put(3,47){$-V(u)$}   \put(77,42){$\lambda$}
    \put(28,-8){$-\lambda V(u)$}
  \end{picture}\ \ 
  \begin{picture}(90,67)(10,-8)
    \put(0,0){\includegraphics{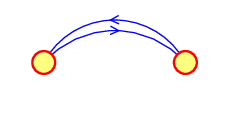}}
    \put(17.8,27.5){$u$}    \put(86,27.5){$v$}
    \put(36,35){$cy$}  \put(66,51){$cy^{-1}$}
    \put(51,-8){$c^2$}
  \end{picture}\ \ 
  \begin{picture}(110,67)(0,-8)
    \put(0,0){\includegraphics{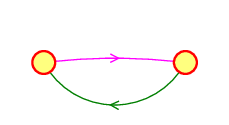}}
    \put(17.8,27.5){$u$}    \put(86,27.5){$v$}
    \put(60,34){$a$}  \put(39,15){$bx^{-1}$}
    \put(43,-8){$abx^{-1}$}
  \end{picture}\ \ 
  \begin{picture}(90,67)(10,-8)
    \put(0,0){\includegraphics{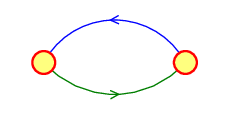}}
    \put(17.8,27.5){$u$}    \put(86,27.5){$v$}
    \put(37,20){$bx$}  \put(66,51){$cy^{-1}$}
    \put(42,-8){$bcxy^{-1}$}
  \end{picture}
  \]
  These 13 cycle covers, with their monomials, correspond to the 13 terms
  in~\eqref{Eq:DispersionPOlynomiaHexagonalLattice}.
\end{Example}
%%%%%%%%%%%%%%%%%%%%%%%%%%%%%%%%%%%%%%%%%%%%%%%%%%%%%%%%%%%%%%%%%%%%%%%%%%%%%%%%%%%%%%%%%%%%%%%%%%%%

The following observation is immediate from these  definitions.

%%%%%%%%%%%%%%%%%%%%%%%%%%%%%%%%%%%%%%%%%%%%%%%%%%%%%%%%%%%%%%%%%%%%%%%%%%%%%%%%%%%%%%%%%%%%%%%%%%%%
\begin{Proposition}\label{P:fine_Determinant}
  Let $\Gamma$ be a $\ZZ^d$-periodic graph with Floquet matrix $H(z)$.
  Then we have
  \[
     \Phi\ =\ \det(\lambda I_W-H(z))\ =\ \det M(z,\lambda,e,V)\ =\ \sum \wt(\zeta)\ ,
  \]
  the sum over all cycle covers $\zeta$ of\/ $\hGamma$.  
\end{Proposition}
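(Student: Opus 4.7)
The plan is to establish the three equalities in turn. The first, $\Phi = \det(\lambda I_W - H(z))$, is the definition of the dispersion polynomial from~\eqref{Eq:dispersionPolynomial}; the second, $\det(\lambda I_W - H(z)) = \det M(z,\lambda,e,V)$, restates this with the generic characteristic matrix $M$. Only the third equality $\det M = \sum_\zeta \wt(\zeta)$ requires an argument.

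The approach is to expand $\det M$ with the Leibniz formula and match each resulting monomial with a directed cycle cover of $\hGamma$. By the preceding proposition, the $(i,j)$-entry of $M$ equals $(\Ad_{\hGamma})_{i,j}$, which by Definition~\ref{De:hatGamma} is the sum of labels of all directed edges $i \leftarrow j$ in $\hGamma$. Substituting into Leibniz gives
\[
  \det M \;=\; \sum_{w \in S_W} \sgn(w) \prod_{i \in W} \Bigl(\sum_{i \leftarrow w(i)} e_{i \leftarrow w(i)}\Bigr),
\]
and distributing produces, for each $w$, one term per choice of a directed edge $i \leftarrow w(i)$ for every $i \in W$. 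Such a collection of $|W|$ directed edges is precisely a directed cycle cover $\zeta$ of $\hGamma$ (each vertex has in-degree and out-degree one), and its underlying permutation $w_\zeta$ is recovered from $\zeta$. Conversely, every directed cycle cover of $\hGamma$ arises uniquely in this way, giving a bijection between cycle covers and pairs (permutation, compatible edge choice). Consequently,
\[
  \det M \;=\; \sum_\zeta \sgn(w_\zeta) \prod_{e \in \zeta} \mathrm{label}(e) \;=\; \sum_\zeta \wt(\zeta),
\]
the last equality recording that $\wt(\zeta)$ is the total signed product of edge labels along $\zeta$.

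The only delicate point, and the place I expect to need care, is the sign bookkeeping for $\sgn(w_\zeta)$. Off-diagonal labels in $\hGamma$ are positive, while the self-loop labels $\lambda$ and $-V(v)$ carry exactly the signs dictated by $\lambda I_W - H(z)$; so one checks cycle-by-cycle that these explicit label signs, together with the overall $\sgn(w_\zeta)$ from Leibniz, combine consistently with the convention in the definition of $\wt(\zeta)$. Everything else is a straightforward translation between the algebra of determinant expansion and the combinatorics of directed cycle covers, and is justified already by Corollary~\ref{C:Cancellation-free} and the identification of $\Ad_{\hGamma}$ with $\lambda I_W - H(z)$ from the preceding proposition.
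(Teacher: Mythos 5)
Your argument is correct and takes the same route as the paper: the paper declares the identity ``immediate from these definitions,'' sketching the Leibniz-plus-relabeling reasoning informally in the paragraph preceding the proposition statement, and your proposal simply makes that sketch precise. Two small notes. First, you correctly read $\wt(\zeta)$ as the \emph{signed} product $\sgn(w_\zeta)\prod_{e\in\zeta}\mathrm{label}(e)$ --- the paper's phrasing (``the product of the labels'') leaves $\sgn(w_\zeta)$ implicit, but it must be there, as one sees already in a $2\times2$ example where the lone $2$-cycle contributes $-a^2$ to $\det M$, not $+a^2$. Second, your closing appeal to Corollary~\ref{C:Cancellation-free} is superfluous for this proposition: the bijection between (permutation, choice-of-monomial) pairs and directed cycle covers of $\hGamma$ is formal and needs no cancellation-freeness. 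That corollary only becomes relevant later, for the well-definedness of the initial matrix and initial graph, not for the raw expansion identity proved here.
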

%%%%%%%%%%%%%%%%%%%%%%%%%%%%%%%%%%%%%%%%%%%%%%%%%%%%%%%%%%%%%%%%%%%%%%%%%%%%%%%%%%%%%%%%%%%%%%%%%%%%

Let $\eta\in\ZZ^{d+1}$.
For a cycle cover $\zeta$ of $\hGamma$, write \defcolor{$\eta(\zeta)$} for the integer $\eta(\wt(\zeta))$
and let $\eta(\hGamma)$ be the minimum of $\eta(\zeta)$ over all cycle covers $\zeta$ of $\hGamma$.

%%%%%%%%%%%%%%%%%%%%%%%%%%%%%%%%%%%%%%%%%%%%%%%%%%%%%%%%%%%%%%%%%%%%%%%%%%%%%%%%%%%%%%%%%%%%%%%%%%%%
\begin{Definition}\label{De:initial_Graph}
  For $\eta\in\ZZ^{d+1}$, the \demph{initial graph} \defcolor{$\ini_\eta\hGamma$} is a directed subgraph of $\hGamma$ with vertex set $W$.
  A labeled edge $v\leftarrow u$ of $\hGamma$ is a labeled edge of $\ini_\eta\hGamma$ if and only if there is a cycle cover
  $\zeta$ of $\hGamma$ that contains the edge $v\leftarrow u$ and we have $\eta(\zeta)=\eta(\hGamma)$.
  Every edge of $\zeta$ lies in $\ini_\eta\hGamma$, so that $\zeta$ is a cycle cover of $\ini_\eta\hGamma$ and every edge of
  $\ini_\eta\hGamma$ occurs in some cycle cover.
\end{Definition}
%%%%%%%%%%%%%%%%%%%%%%%%%%%%%%%%%%%%%%%%%%%%%%%%%%%%%%%%%%%%%%%%%%%%%%%%%%%%%%%%%%%%%%%%%%%%%%%%%%%%

A consequence of these definitions, Lemma~\ref{Lem:initialMatrixOK}, and Proposition~\ref{P:fine_Determinant} is the
following.

%%%%%%%%%%%%%%%%%%%%%%%%%%%%%%%%%%%%%%%%%%%%%%%%%%%%%%%%%%%%%%%%%%%%%%%%%%%%%%%%%%%%%%%%%%%%%%%%%%%%
\begin{Theorem}\label{Th:InitialGraphForm}
  Let $\Gamma$ be a $\ZZ^d$-periodic graph with characteristic matrix $M(z,\lambda,e,V)$ and dispersion polynomial $\Phi$.
  For any $\eta\in\ZZ^{d+1}$, we have
  \[
    \ini_\eta M\ =\ \Ad_{\ini_\eta\hGamma}
    \qquad\mbox{and}\qquad
      \ini_\eta \Phi\ =\ \sum \wt(\zeta)\,,
  \]
  the sum over all cycle covers  $\zeta$  of the initial graph $\ini_\eta\hGamma$.      
\end{Theorem}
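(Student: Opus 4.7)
The theorem has two assertions, and the plan is to prove the matrix equality $\ini_\eta M = \Ad_{\ini_\eta\hGamma}$ first; the formula for $\ini_\eta \Phi$ then follows quickly. Once the matrix equality is established, Lemma~\ref{Lem:initialMatrixOK} gives $\ini_\eta \Phi = \det \ini_\eta M = \det \Ad_{\ini_\eta \hGamma}$, and the general determinant-as-cycle-cover expansion of the adjacency matrix of a directed labeled multigraph (the identity underlying Proposition~\ref{P:fine_Determinant} applied to $\hGamma$) specialized to $\ini_\eta \hGamma$ yields $\sum_\zeta \wt(\zeta)$ over cycle covers $\zeta$ of $\ini_\eta \hGamma$, completing the proof.

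For the matrix equality, I fix a pair $(v,u)$ and compare entries. By definition, $(\Ad_{\ini_\eta\hGamma})_{v,u}$ is the sum of labels $\ell$ of the directed edges $v \leftarrow u$ in $\ini_\eta \hGamma$, while by~\eqref{Eq:InitialMatrix} the entry $(\ini_\eta M)_{v,u}$ is either $\ini_\eta(M_{v,u})$, in the branch where some permutation $w$ with $w(v)=u$ contributes to $\ini_\eta \Phi$, or $0$ otherwise. The crux is the biconditional: a labeled edge $v\leftarrow u$ with label $\ell$ lies in $\ini_\eta \hGamma$ if and only if $\ell$ appears as a monomial summand of $(\ini_\eta M)_{v,u}$.

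For the forward direction, Definition~\ref{De:initial_Graph} provides a cycle cover $\zeta$ of $\hGamma$ containing this labeled edge with $\eta(\zeta) = \eta(\hGamma)$. The corresponding permutation $w$ satisfies $w(v)=u$, and the monomial $\wt(\zeta)$ has $\eta$-value equal to $\eta(\Phi)$, so $w$ contributes to $\ini_\eta \Phi$; hence the nonzero branch of~\eqref{Eq:InitialMatrix} applies. To see that $\ell$ appears in $\ini_\eta(M_{v,u})$, suppose for contradiction some other label $\ell'$ of an edge $v\leftarrow u$ in $\hGamma$ had $\eta(\ell') < \eta(\ell)$; swapping $\ell$ for $\ell'$ in $\zeta$ preserves the combinatorial cycle structure and produces a cycle cover with $\eta$-value $\eta(\zeta)+\eta(\ell')-\eta(\ell) < \eta(\hGamma)$, contradicting minimality. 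The backward direction is a mirror-image swap: given $\ell$ in $\ini_\eta(M_{v,u})$ under the nonzero branch, start with a minimizing cycle cover $\zeta_0$ whose permutation $w$ has $w(v)=u$; the label of $\zeta_0$ at position $(v,u)$ has the same minimal $\eta$-value as $\ell$ (by the same contradiction argument), so substituting $\ell$ at that position yields a cycle cover of minimum $\eta$-value that contains the desired edge.

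I expect the main obstacle to be the legitimacy of these swap arguments. Their validity rests on two facts: first, interchanging two parallel edges of $\hGamma$ with the same source and target preserves the combinatorial cycle-cover structure, so a swap really does produce another cycle cover; second, the cancellation-freeness of the generic dispersion polynomial (Proposition~\ref{P:GFMcancellation-free}) ensures that no algebraic cancellation within $M_w$ can render a label of minimal $\eta$-value invisible in $\ini_\eta(M_{v,u})$. Granting those two facts, the rest is bookkeeping between permutations $w$ and directed cycle covers of $\hGamma$, together with a single invocation of Lemma~\ref{Lem:initialMatrixOK} to pass from the matrix identity to the polynomial identity.
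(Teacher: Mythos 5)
Your proof is correct and follows the route the paper intends: the paper presents Theorem~\ref{Th:InitialGraphForm} as an unproved ``consequence of these definitions, Lemma~\ref{Lem:initialMatrixOK}, and Proposition~\ref{P:fine_Determinant},'' and you have supplied the edge-swap argument needed to verify the entry-by-entry matrix identity $\ini_\eta M = \Ad_{\ini_\eta\hGamma}$, after which the polynomial identity follows from exactly the chain you give. One small imprecision worth fixing: the role of Proposition~\ref{P:GFMcancellation-free} is not to prevent cancellation \emph{within} a single product $M_w$ (the sign normalization at the start of Section~\ref{S:cancellation-free} already rules that out); it is to prevent cancellation \emph{between} the terms $M_w$ in the sum $\sum_w \sgn(w)M_w$, which is what guarantees $\eta(\hGamma)=\eta(\Phi)$ and hence that a permutation contributing to $\ini_\eta\Phi$ really does carry a cycle cover achieving the minimum $\eta$-value.
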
  
%%%%%%%%%%%%%%%%%%%%%%%%%%%%%%%%%%%%%%%%%%%%%%%%%%%%%%%%%%%%%%%%%%%%%%%%%%%%%%%%%%%%%%%%%%%%%%%%%%%% 

Vertical faces of $\calN(\Phi)$ are detected by initial graphs.

%%%%%%%%%%%%%%%%%%%%%%%%%%%%%%%%%%%%%%%%%%%%%%%%%%%%%%%%%%%%%%%%%%%%%%%%%%%%%%%%%%%%%%%%%%%%%%%%%%%%
\begin{Theorem}\label{L:vertical}
  A vector $\eta$ exposes a vertical face of  $\calN(\Phi)$ if and only if
  the initial graph $\ini_{\eta}\hGamma$ has a vertex $v$  with loops labeled $\lambda$ and $-V(v)$.
  Such a vector $\eta$ is horizontal.
\end{Theorem}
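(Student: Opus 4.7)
The plan is to exploit a \emph{loop-swap} at any vertex $v\in W$. By Definition~\ref{De:hatGamma}, the graph $\hGamma$ carries exactly two loops at $v$, one labeled $\lambda$ and the other labeled $-V(v)$. Given any cycle cover $\zeta$ of $\hGamma$ whose underlying permutation fixes $v$, the cover uses one of these two loops; replacing it by the other yields a new cycle cover $\zeta'$ with the same underlying permutation. Then $\wt(\zeta')$ differs from $\wt(\zeta)$ only by a factor of $\lambda/(-V(v))$ or its reciprocal, so the $(z,\lambda)$-exponent of $\wt(\zeta')$ differs from that of $\wt(\zeta)$ by $\pm(\bfzero,1)$. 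Combined with Proposition~\ref{P:GFMcancellation-free} (so that the $(z,\lambda)$-exponents of cycle-cover weights exactly fill out the support $\calA(\Phi)$), this swap drives both directions.

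I will first dispose of the horizontality remark: if $\eta$ exposes a face $F$ that contains a translate of the vector $(\bfzero,1)$, then $\eta$ must annihilate $(\bfzero,1)$, so $\eta=(\eta',0)$ is horizontal.

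For the ``only if'' direction, suppose $F$ is vertical and $\eta=(\eta',0)$. Since $F$ is a lattice polytope whose affine hull contains a vertical direction, not all of its vertices share the same $\lambda$-coordinate, so $F$ has two lattice points with distinct $\lambda$-coordinates. By Proposition~\ref{P:GFMcancellation-free} these arise as $(z,\lambda)$-exponents of $\eta$-minimizing cycle-cover weights, so there is an $\eta$-minimizing cycle cover $\zeta$ with $\lambda$-exponent at least $1$, which must therefore use a $\lambda$-loop at some vertex $v$. The loop-swap at $v$ preserves the $\eta$-value (because $\eta$ is horizontal) and produces an $\eta$-minimizing cycle cover containing the $-V(v)$-loop at $v$. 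By Definition~\ref{De:initial_Graph}, both loops at $v$ lie in $\ini_\eta\hGamma$.

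For the converse, suppose $\ini_\eta\hGamma$ contains both loops at some vertex $v$. Definition~\ref{De:initial_Graph} gives cycle covers $\zeta_1,\zeta_2$ with $\eta(\zeta_i)=\eta(\hGamma)$ in which $v$ carries the $\lambda$-loop, respectively the $-V(v)$-loop. Writing $\eta=(\eta',a)$, loop-swapping produces cycle covers $\zeta_1',\zeta_2'$ whose $(z,\lambda)$-weight exponents lie in $\calN(\Phi)$ but are shifted from those of $\zeta_1,\zeta_2$ by $-(\bfzero,1)$ and $+(\bfzero,1)$ respectively. The minimality of $\eta(\hGamma)$ on $\calN(\Phi)$ then forces $a\leq 0$ (from $\zeta_1'$) and $a\geq 0$ (from $\zeta_2'$), so $a=0$ and $\eta$ is horizontal. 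The exponents of $\wt(\zeta_1)$ and $\wt(\zeta_1')$ both attain $\eta(\hGamma)$, hence both lie on $F$, and they differ by $(\bfzero,1)$, proving $F$ is vertical. The only subtle ingredient in the whole argument is verifying that the $(z,\lambda)$-exponents of cycle-cover weights coincide with $\calA(\Phi)$, but this is exactly what Proposition~\ref{P:GFMcancellation-free} guarantees; everything else is combinatorial bookkeeping.
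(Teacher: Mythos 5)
Your proof is correct and uses the same loop-swap idea as the paper's proof; the only meaningful difference is that you explicitly derive the horizontality of $\eta$ in the converse direction from the minimality of $\eta(\hGamma)$ on $\calN(\Phi)$, whereas the paper asserts it without justification. This makes your version slightly more self-contained, but the overall structure and key ideas coincide with the paper's argument.
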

%%%%%%%%%%%%%%%%%%%%%%%%%%%%%%%%%%%%%%%%%%%%%%%%%%%%%%%%%%%%%%%%%%%%%%%%%%%%%%%%%%%%%%%%%%%%%%%%%%%%
\begin{proof}
  For the forward implication, note that  the only monomials in the matrix $M$ whose exponents do not lie 
  in $\ZZ^d\times\{0\}$ are the indeterminates $\lambda$ of each diagonal entry.
  As $\eta$ exposes a vertical face, it is horizontal.
  Consequently, if the face exposed by $\eta$ is vertical, $\ini_{\eta}\hGamma$ has a loop
  at some vertex \defcolor{$v$} labeled $\lambda$.
  Let $\zeta$ be a cycle cover of $\ini_{\eta}\hGamma$ containing this loop and let $\zeta'$ be the union of the other cycles in $\zeta$.
  Let $\zeta''$ be $\zeta'$ together with the loop at $v$ labeled $-V(v)$; This is a cycle cover of  $\hGamma$.
  Since $\eta$ is horizontal, $\eta(\zeta)=\eta(\zeta')=\eta(\zeta'')$, which implies that the loop at $v$ labeled $-V(v)$ also
  lies in $\ini_{\eta}\hGamma$.

  Now suppose that $v$ is a vertex of $\ini_{\eta}\hGamma$ with two loops labeled $\lambda$ and $-V(v)$.
  The $\eta$ is necessarily horizontal.
  By the same arguments as in the previous paragraph, then there are cycle covers $\zeta$
  and $\zeta''$ of $\ini_{\eta}\hGamma$ that differ only in these two loops at $v$.
  Writing $\zeta'$ for the (common) rest of $\zeta, \zeta''$, we have
  \[
  \wt(\zeta)+\wt(\zeta'')\ =\ (\lambda -V(v))\wt(\zeta')\,.
  \]
  By the first factor, the face of $\calN(\Phi)$ exposed by $\eta$ contains the vertical vector $(\bfzero,1)$.
\end{proof}  
%%%%%%%%%%%%%%%%%%%%%%%%%%%%%%%%%%%%%%%%%%%%%%%%%%%%%%%%%%%%%%%%%%%%%%%%%%%%%%%%%%%%%%%%%%%%%%%%%%%%

%%%%%%%%%%%%%%%%%%%%%%%%%%%%%%%%%%%%%%%%%%%%%%%%%%%%%%%%%%%%%%%%%%%%%%%%%%%%%%%%%%%%%%%%%%%%%%%%%%%%
\begin{Example}\label{Ex:meet_singularHouse}
  The periodic graph $\Gamma$ on the left in Figure~\ref{F:Vertical} has four
%%%%%%%%%%%%%%%%%%%%%%%%%%%%%%%%%%%%%%%%%%%%%%%%%%%%%%%%%%%%%%%%%%%%%%%%%%%%%%%%%%%%%%%%%%%%%%%%%%%%
\begin{figure}[htb]
  \centering
   \includegraphics[height=110pt]{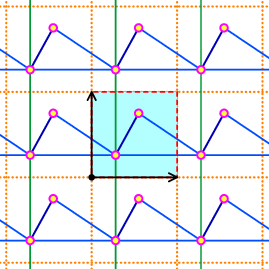}
    \qquad
   \begin{picture}(157,110)(-26,0)
    \put(  0,  0){\includegraphics{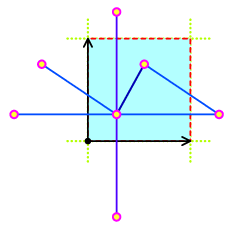}}
    \put( 36, 30){{\color{white}\circle*{9}}}
    \put( 47, 47){\small$u$}      \put( 67, 83){\small$v$}  

    \put(-31, 44){\small$(-1,0){+}u$}  \put(98, 63){\small$(1,0){+}u$}
    \put(61,102){\small$(0,1){+}u$} \put( 61,3){\small$(0,-1){+}u$}
    \put(-17, 86){\small$(-1,0){+}v$}

    \put(65, 61){\small$a$}
    \put(28, 45){\small$b$}   \put(75, 45){\small$b$}
    \put(49, 75){\small$c$}   \put(49, 24){\small$c$}
    \put(31, 73){\small$d$}   \put(82, 72){\small$d$}   
   \end{picture}
   \quad
   \begin{picture}(100,120)
     \put(0,0){\includegraphics[height=110pt]{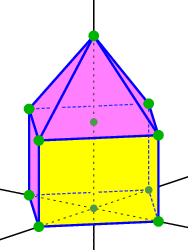}}
     \put( 55,38){\small$G$}     \put( 60,77){\small$A$}
     \put(5,0){$x$} \put(70,0){$y$} \put(47,100){$\lambda$}
   \end{picture}\vspace{5pt}
  \caption{A $\ZZ^2$-periodic graph, its edge labels, and Newton polytope.}
   \label{F:Vertical}
\end{figure}
%%%%%%%%%%%%%%%%%%%%%%%%%%%%%%%%%%%%%%%%%%%%%%%%%%%%%%%%%%%%%%%%%%%%%%%%%%%%%%%%%%%%%%%%%%%%%%%%%%%%
(orbits of) edges and two vertices in its fundamental domain.
Let $u,v$ be the vertices and $a,b,c,d$ the edge labels as indicated
in the middle diagram in Figure~\ref{F:Vertical}.

The characteristic matrix $M=\lambda I_2-H(x,y)$  is
\[
  M\ \vcentcolon=\ 
  \left(
  \begin{array}{cc}
       \lambda - V(u) +\underline{bx}+bx^{-1}+\underline{cy}+cy^{-1} & \underline{a}+dx^{-1} \\
         a + \underline{dx}                                        & \underline{\lambda -V(v)}   
  \end{array}
   \right).
\]
The dispersion polynomial $\Phi(x,y,\lambda)=\det(\lambda I - H)$ is
 \begin{multline} \label{Eq:singHouseDispersonPolynomial}
  \lambda^2
   +\lambda( bx +cy - V(u) - V(v) +bx^{-1} + cy^{-1}) \\
   -bV(v)x -adx - cV(v)y  -a^2-d^2 + V(u)V(v) -(bV(v)+ad)x^{-1}  - cV(v)y^{-1}\,.\qquad
 \end{multline}
Its Newton polytope $A$ is shown on the right in Figure~\ref{F:Vertical}, and here is the graph $\hGamma$.
\[
\begin{picture}(273,110)(-147,-54)
   \put(-125,-55){\includegraphics{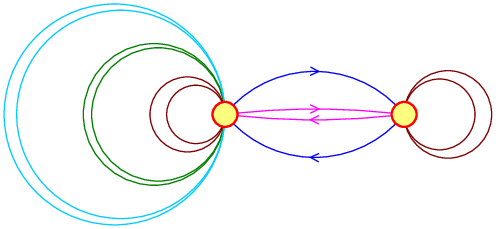}}
   \put( -19, -2.6){$u$}       \put(66.5,-2.6){$v$}
   \put( -42, -2.6){$\lambda$} \put(  92,-2.6){$\lambda$}
   \put( -19,  43){$-V(u)$}
   \thicklines
   \put( -11,39  ){{\color{White}\line(-1,-1){20}}}   \put( -11,39.5){{\color{White}\line(-1,-1){20}}}
   \put( -11,40  ){{\color{White}\line(-1,-1){20}}}
   \put( -11,40.5){{\color{White}\line(-1,-1){20}}}   \put( -11,41  ){{\color{White}\line(-1,-1){20}}}
   \put( -11,40  ){{\color{White}\vector(-1,-1){19}}}
   \thinlines
   \put( -11,40){{\color{Maroon}\vector(-1,-1){20.5}}}
   \thinlines
                               \put(  88,  24){$-V(v)$}
   \put(  31,  6){$a$}         \put(  31, -10){$a$}
   \put(-135, -14){$bx$}       \put(-112, -14){$bx^{-1}$}
   \put( -95,  13){$cy$}       \put( -76,   8){$cy^{-1}$}
   \put(  41,-28){$dx^{-1}$}    \put(  41,  20.5){$dx$}
\end{picture}
\]
The face $G$ of $A$ is exposed by $\eta=(-1,-1,0)$.
We show $\ini_\eta M$ and the initial graph $\ini_\eta\hGamma$.
 \begin{equation}\label{Eq:inGgraph}
  \left(
  \begin{array}{cc}
       bx+cy & a \\
         dx  & \lambda -V(v)   
  \end{array}\right)\qquad\quad
  \raisebox{-27pt}{\begin{picture}(220,60)(-12,0)
   \put(0,0){\includegraphics{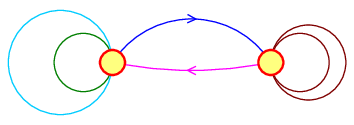}}
   \put(50.5,  27){$u$}       \put(127, 27){$v$}
   \put( -9,  28){$bx$}      \put( 13,  28){$cy$}  
   \put(102,  18){$a$}       \put(102,  52){$dx$}
   \put(149,  27){$\lambda$} \put(150,  49){$-V(v)$}
  \end{picture}} 
 \end{equation}
Then $\ini_\eta\Phi = (\lambda-V(v))(bx+cy) -adx= \Ad_{\ini_\eta\hGamma}$,
whose support is the face $G$ of the polytope in Figure~\ref{F:Vertical}.
\end{Example}
%%%%%%%%%%%%%%%%%%%%%%%%%%%%%%%%%%%%%%%%%%%%%%%%%%%%%%%%%%%%%%%%%%%%%%%%%%%%%%%%%%%%%%%%%%%%%%%%%%%% 

A connected component $G$ of $\ini_\eta\hGamma$ is \demph{monomial} if every cycle cover of $G$ gives the same monomial in $(z,\lambda)$.
Continuing Example~\ref{Ex:meet_singularHouse}, if $\eta=(-1,0,1)$, then $\ini_\eta A=(1,0,0)$ is the vertex marked `$x$'. 
We show both $\ini_\eta M$ and $\ini_\eta\hGamma$, which is monomial.
\[
  \left(
  \begin{array}{cc}
       bx & a \\
         dx  & V(v)   
  \end{array}\right)\qquad\quad
  \raisebox{-22pt}{\begin{picture}(155,51)(-9,0)
   \put(0,0){\includegraphics{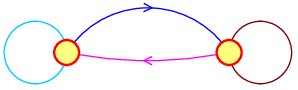}}
   \put(28.5,15.5){$u$}       \put(107,15.5){$v$}
   \put( -9,  23){$bx$}      
   \put( 80,   7){$a$}       \put(80,  41){$dx$}
   \put(110,  37){$-V(v)$}
  \end{picture}} 
\]

%%%%%%%%%%%%%%%%%%%%%%%%%%%%%%%%%%%%%%%%%%%%%%%%%%%%%%%%%%%%%%%%%%%%%%%%%%%%%%%%%%%%%%%%%%%%%%%%%%%%
\begin{Definition}\label{Def:asymptoticallyDisconnected}
  If $\ini_\eta\hGamma$ has at least two connected components that are not monomial, then  $\Gamma$
  is \demph{asymptotically disconnected} in the direction $\eta$.
  As with $\ini_\eta M$, this depends upon the face of $\calN(\Phi)$ exposed by $\eta$.
\end{Definition}  
%%%%%%%%%%%%%%%%%%%%%%%%%%%%%%%%%%%%%%%%%%%%%%%%%%%%%%%%%%%%%%%%%%%%%%%%%%%%%%%%%%%%%%%%%%%%%%%%%%%%

We state an easy consequence of Definition~\ref{Def:asymptoticallyDisconnected}

%%%%%%%%%%%%%%%%%%%%%%%%%%%%%%%%%%%%%%%%%%%%%%%%%%%%%%%%%%%%%%%%%%%%%%%%%%%%%%%%%%%%%%%%%%%%%%%%%%%%
\begin{Lemma}\label{Lem:asymptoticallydisconnected}
  Suppose that $\Gamma$ is asymptotically disconnected in the direction of $\eta$.
  Then $\ini_\eta\Phi$ has a factorization of the form $\gamma g_1\dotsb g_r$, where $\gamma$ arises from the monomial components
  of $\ini_\eta\Gamma$ and each $g_i$ from a non-monomial component.
\end{Lemma}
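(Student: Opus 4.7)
The plan is to derive the factorization directly from Theorem~\ref{Th:InitialGraphForm}, which expresses $\ini_\eta\Phi$ as a sum of weights over the cycle covers of $\ini_\eta\hGamma$. The underlying combinatorial fact is that a cycle cover of a graph is, in a canonical way, a tuple of cycle covers of its connected components, and this passes to the generating sum $\sum\wt(\zeta)$ because weights and signs are multiplicative across disjoint components.

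Let $C_1,\dotsc,C_k$ denote the connected components of $\ini_\eta\hGamma$, reordered so that $C_1,\dotsc,C_s$ are monomial and $C_{s+1},\dotsc,C_k$ are non-monomial (with $r=k-s$). Any directed cycle of $\ini_\eta\hGamma$ is contained in a single component, so each cycle cover $\zeta$ of $\ini_\eta\hGamma$ decomposes uniquely as a disjoint union $\zeta_1\sqcup\dotsb\sqcup\zeta_k$ of cycle covers of the individual $C_i$; conversely, every such tuple assembles into a cycle cover of $\ini_\eta\hGamma$. Since $\wt(\zeta)=\prod_i\wt(\zeta_i)$, Theorem~\ref{Th:InitialGraphForm} gives
\[
  \ini_\eta\Phi\ =\ \sum_{\zeta}\wt(\zeta)\ =\ \prod_{i=1}^k\Bigl(\sum_{\zeta_i}\wt(\zeta_i)\Bigr),
\]
where the inner sum runs over cycle covers of $C_i$.

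For each monomial component $C_i$ with $i\leq s$, every cycle cover contributes the same monomial in $(z,\lambda)$ by definition, so the corresponding factor is a nonzero constant multiple of a single monomial (the count is nonzero because, by Definition~\ref{De:initial_Graph}, every edge of $\ini_\eta\hGamma$ participates in some cycle cover). Multiplying these $s$ factors together yields a single monomial $\gamma$ in $(z,\lambda)$. For each $i>s$, denote the corresponding inner sum by $g_{i-s}$; this is a polynomial in $(z,\lambda)$ that is not a monomial, by the definition of non-monomial component. This yields the asserted factorization $\ini_\eta\Phi=\gamma\, g_1\dotsb g_r$.

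The only technical point that requires care is the multiplicativity of the determinantal signs across connected components. The weight identity $\wt(\zeta)=\prod_i\wt(\zeta_i)$ is immediate because edge sets of distinct components are disjoint. For signs, the permutation $w$ associated with $\zeta$ decomposes into disjoint cycles each supported in a single $C_i$, so $\sgn(w)=\prod_i\sgn(w_i)$, where $w_i$ is the restriction of $w$ to the vertex set of $C_i$. The hard part, if any, is simply keeping track of the bookkeeping so that these sign factors are absorbed into the component weights $\wt(\zeta_i)$ consistently with Proposition~\ref{P:fine_Determinant}; once that is done, the displayed product formula is automatic.
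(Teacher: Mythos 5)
Your argument is correct and is the natural one the paper has in mind; the paper states the lemma as an immediate consequence of Definition~\ref{Def:asymptoticallyDisconnected} without spelling out details, and your cycle-cover decomposition is exactly the combinatorial reading of the block-diagonal structure of $\ini_\eta M = \Ad_{\ini_\eta\hGamma}$ together with Theorem~\ref{Th:InitialGraphForm}. The one point you flag as needing care---multiplicativity of the determinantal sign across components---is indeed fine, since a cycle cover of a disconnected graph decomposes uniquely into cycle covers of the components and both the edge-label product and the permutation sign are multiplicative over that decomposition.
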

%%%%%%%%%%%%%%%%%%%%%%%%%%%%%%%%%%%%%%%%%%%%%%%%%%%%%%%%%%%%%%%%%%%%%%%%%%%%%%%%%%%%%%%%%%%%%%%%%%%%

In Section~\ref{S:Structural}, we explore the consequences of asymptotic disconnectedness and determine its
contribution to the correction term $\Ndisc$ of Theorem~\ref{Th:main}.

%%%%%%%%%%%%%%%%%%%%%%%%%%%%%%%%%%%%%%%%%%%%%%%%%%%%%%%%%%%%%%%%%%%%%%%%%%%%%%%%%%%%%%%%%%%%%%%%%%%%
%
\section{Toric varieties} \label{S:toric}
%
%%%%%%%%%%%%%%%%%%%%%%%%%%%%%%%%%%%%%%%%%%%%%%%%%%%%%%%%%%%%%%%%%%%%%%%%%%%%%%%%%%%%%%%%%%%%%%%%%%%%
The asymptotic behavior of an algebraic hypersurface $\Var(\Phi)$ is reflected in the geometry of the Newton polytope
$\calN(\Phi)$ and encapsulated by a compactification of $\Var(\Phi)$ in an associated projective toric variety.
For critical points in Bloch varieties, the volume of the Newton polytope is an upper bound with correction terms arising
from asymptotic critical points.
We formulate and prove our main result, Theorem~\ref{Th:main}, based on results of
Section~\ref{S:vertical} and~\ref{S:Structural}.

We give local descriptions of the Bloch variety and critical point equations needed for those results. 
This includes a discussion of singular solutions in Theorem~\ref{Th:multiplicity_at_infinity}.
A treatment of projective toric varieties is in~\cite[Ch.~5]{GKZ} with further results found
in~\cite[Ch.\ 2]{CLS},~\cite{Fu93},~\cite[Ch.~3]{IHP},
  and~\cite{Ibadan}.
For projective space,~\cite[Ch.\ 8]{CLO} suffices.

%%%%%%%%%%%%%%%%%%%%%%%%%%%%%%%%%%%%%%%%%%%%%%%%%%%%%%%%%%%%%%%%%%%%%%%%%%%%%%%%%%%%%%%%%%%%%%%%%%%%
\subsection{Toric varieties and critical points}
To simplify notation, the indeterminates in this section will be $t_1,\dotsc,t_n$.
Let $\calA\subset\ZZ^n$ be a finite set of integer vectors, which are exponent vectors for monomials in $t$.
We will use $\calA$ as an index set.
For example, \defcolor{$\CC^\calA$} is the set of functions from $\calA$ to $\CC$, a finite-dimensional $\CC$-vector space.
It has coordinates $(x_a\mid a\in\calA)$.
Write \defcolor{$\PP^\calA$} for the corresponding projective space which has homogeneous coordinates $[x_a\mid a\in\calA]$.
This equals $(\CC^\calA\smallsetminus\{0\})/\Delta\CC^\times$, the quotient of $\CC^\calA\smallsetminus\{0\}$ by the
action of scalar multiplication given by the scalar matrices \defcolor{$\Delta\CC^\times$}.
It is a compact complex manifold of dimension $|\calA|{-}1$.
The set of points $x\in\PP^\calA$ with no coordinate vanishing forms its \demph{dense torus},
$(\CC^\times)^\calA/\Delta\CC^\times \simeq (\CC^\times)^{|\calA|-1}$.

Let us consider the map $\varphi_\calA\colon (\CC^\times)^n\to\PP^\calA$ given by 
 \begin{equation}\label{Eq:varphi_A}
   \varphi_\calA\ \colon\ t\ \longmapsto\
          [ t^a  \mid  a \in\calA]\,.
  \end{equation}
We record a few facts about $\varphi_\calA$ that follow from these definitions.
Let $\defcolor{\ZZ\calA}\subset\ZZ^{n}$ be the free abelian subgroup generated by the differences
$\{a-b\mid a,b\in\calA\}$.
Its rank equals the dimension of the affine span of $\calA$.
Its saturation \defcolor{$\Sat(\calA)$} is $\defcolor{\RR\calA}\cap\ZZ^n$, the integer vectors in the $\RR$-span of $\ZZ\calA$,
which is a free abelian group of the same rank as $\ZZ\calA$.

%%%%%%%%%%%%%%%%%%%%%%%%%%%%%%%%%%%%%%%%%%%%%%%%%%%%%%%%%%%%%%%%%%%%%%%%%%%%%%%%%%%%%%%%%%%%%%%%%%%%
\begin{Proposition} \label{P:varphi_calA_facts}
 The map $\varphi_\calA$ is a group homomorphism from $(\CC^\times)^n$ to the dense torus
 in $\PP^\calA$.
 Its image \defcolor{$\calO_\calA$} is a torus of dimension equal to the rank of $\ZZ\calA$.
 Its kernel is
 \[
   \ker(\calA)\ =\ \{ t\in(\CC^\times)^n\mid t^\alpha = 1\ \mbox{\ for all\ }\ \alpha\in\ZZ\calA\}\,.
 \]
 We have the exact sequence
 \[
   1\ \longrightarrow\ \ker(\calA)_0\
   \longrightarrow\  \ker(\calA)\
   \longrightarrow\  \ker(\calA)_{\mbox{\footnotesize\rm tor}}\
   \longrightarrow\  1\,,
 \]
 where $\ker(\calA)_0$ is the connected component of the identity in $\ker(\calA)$,
  \[
     \defcolor{\ker(\calA)_0}\ \vcentcolon=\ \{t\in(\CC^\times)^n\mid t^\beta = 1\ \mbox{\ for all\ }\ \beta\in\Sat(\calA)\}\,,
  \]
  which is isomorphic to $(\CC^\times)^{n-\rank(\ZZ\calA)}$.
  The component group $\ker(\calA)_{\mbox{\footnotesize\rm tor}}$ is
  \begin{eqnarray*}
    \defcolor{\ker(\calA)_{\mbox{\footnotesize\rm tor}}} &\vcentcolon=&
      \{ z\in(\CC^\times)^n/\ker(\calA)_0 \mid z^\alpha = 1\ \mbox{\ for all\ }\ \alpha\in\ZZ\calA\}\\
      &\simeq& \Hom(\Sat(\calA)/\ZZ\calA\,,\ \CC^\times)\,,
  \end{eqnarray*}
  which is a finite group of order $[\Sat(\calA)\colon\ZZ\calA]$.
  The induced map  $\varphi_\calA\colon (\CC^\times)^n/\ker(\calA)_0\to\calO_\calA$ is a covering space of degree
  $[\Sat(\calA)\colon\ZZ\calA]$.
\end{Proposition}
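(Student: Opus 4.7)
The proof will proceed by treating $\varphi_\calA$ as the composition of the homomorphism $(\CC^\times)^n \to (\CC^\times)^\calA$ defined by $t \mapsto (t^a)_{a\in\calA}$ with the quotient map $(\CC^\times)^\calA \to (\CC^\times)^\calA/\Delta\CC^\times$, which is the dense torus of $\PP^\calA$. Each coordinate $t\mapsto t^a$ is a character of $(\CC^\times)^n$, so the first map is a homomorphism; the second is visibly a homomorphism. This establishes the homomorphism claim, and the image $\calO_\calA$ is a subtorus of the dense torus.

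Next, I compute the kernel. A point $t$ satisfies $\varphi_\calA(t)=[1:1:\dotsb:1]$ exactly when $(t^a)_{a\in\calA}$ is a scalar tuple, equivalently $t^{a-b}=1$ for all $a,b\in\calA$. Since the differences $a-b$ generate $\ZZ\calA$, this gives $\ker(\varphi_\calA)=\ker(\calA)$ as stated. By general torus theory, $\dim\calO_\calA = n - \dim\ker(\calA) = \rank(\ZZ\calA)$, since the kernel is cut out by $\rank(\ZZ\calA)$ independent equations after saturating.

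The main structural step is to invoke the duality between algebraic tori and their character lattices. Identify $(\CC^\times)^n$ with $\Hom(\ZZ^n,\CC^\times)$ via $t\mapsto(\alpha\mapsto t^\alpha)$. Then for any subgroup $L\subset\ZZ^n$, the subgroup $\{t\in(\CC^\times)^n : t^\alpha=1\ \forall\alpha\in L\}$ is precisely $\Hom(\ZZ^n/L,\CC^\times)$. Since $\CC^\times$ is divisible, the functor $\Hom(-,\CC^\times)$ is exact on finitely generated abelian groups. Applying it to
\[
  0\ \longrightarrow\ \Sat(\calA)/\ZZ\calA\ \longrightarrow\ \ZZ^n/\ZZ\calA\ \longrightarrow\ \ZZ^n/\Sat(\calA)\ \longrightarrow\ 0
\]
yields the claimed exact sequence
\[
  1\ \longrightarrow\ \ker(\calA)_0\ \longrightarrow\ \ker(\calA)\ \longrightarrow\ \Hom(\Sat(\calA)/\ZZ\calA,\CC^\times)\ \longrightarrow\ 1,
\]
where $\ker(\calA)_0$ is identified as the subgroup cut out by $\Sat(\calA)$. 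Because $\Sat(\calA)$ is saturated, $\ZZ^n/\Sat(\calA)$ is a free abelian group of rank $n-\rank(\ZZ\calA)$, so $\ker(\calA)_0\simeq(\CC^\times)^{n-\rank(\ZZ\calA)}$ is connected. Since the quotient $\Sat(\calA)/\ZZ\calA$ is finite of order $[\Sat(\calA):\ZZ\calA]$, its Pontryagin dual is a finite group of the same order, forcing $\ker(\calA)_0$ to be the identity component of $\ker(\calA)$.

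Finally, the covering space claim follows formally: $\varphi_\calA$ factors as
\[
  (\CC^\times)^n\ \longtwoheadrightarrow\ (\CC^\times)^n/\ker(\calA)_0\ \longrightarrow\ (\CC^\times)^n/\ker(\calA)\ \simeq\ \calO_\calA,
\]
with the second map an \'etale surjection whose fiber is the finite group $\ker(\calA)/\ker(\calA)_0\simeq \ker(\calA)_{\mathrm{tor}}$ of order $[\Sat(\calA):\ZZ\calA]$. The main obstacle is simply making sure the identification of the connected component is rigorous; once one confirms via the duality that $\Hom(G,\CC^\times)$ is connected if and only if $G$ is torsion-free, the rest reduces to bookkeeping with the short exact sequence above, and no further ingredients are needed.
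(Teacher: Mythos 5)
Your proof is correct. In fact the paper gives no proof of this Proposition at all; it introduces the statements with the phrase ``we record a few facts about $\varphi_\calA$ that follow from these definitions,'' treating them as standard torus theory. Your argument supplies exactly the standard justification: identify $(\CC^\times)^n$ with $\Hom(\ZZ^n,\CC^\times)$, use that $\CC^\times$ is divisible (hence injective) so $\Hom(-,\CC^\times)$ is exact, and apply it to the short exact sequence $0\to\Sat(\calA)/\ZZ\calA\to\ZZ^n/\ZZ\calA\to\ZZ^n/\Sat(\calA)\to0$ to produce the stated exact sequence of kernels with the correct identifications of the free part and the torsion part. The one place you are slightly loose is the early claim that $\dim\calO_\calA=n-\dim\ker(\calA)=\rank(\ZZ\calA)$ ``since the kernel is cut out by $\rank(\ZZ\calA)$ independent equations after saturating'': as written this asserts the dimension of $\ker(\calA)$ before you have established that $\ker(\calA)_0\simeq(\CC^\times)^{n-\rank(\ZZ\calA)}$ is its identity component. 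It is harmless since you prove that identification a few lines later, but the cleaner order is to first establish the structure of $\ker(\calA)$ via the duality argument and then read off $\dim\calO_\calA=n-\dim\ker(\calA)_0=\rank(\ZZ\calA)$.
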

%%%%%%%%%%%%%%%%%%%%%%%%%%%%%%%%%%%%%%%%%%%%%%%%%%%%%%%%%%%%%%%%%%%%%%%%%%%%%%%%%%%%%%%%%%%%%%%%%%%%

Let us write $\defcolor{X_\calA}$ for the closure of the image of $\varphi_\calA$.
This is the \demph{projective toric variety} associated to $\calA$.
A reason for these definitions is that they interchange nonlinearities of
equations in the following sense.
A linear combination of monomials, 
\[
   f\ =\ \sum_{a \in \calA}  c_a t^a \qquad\ c_a\in\CC\,,
\]
is a polynomial with \demph{support}  $\calA$.
Define $\defcolor{\Lambda_f}\vcentcolon= \Var(\sum c_a x_a)$, the hyperplane in $\PP^\calA$ whose defining
linear form has coefficients from $f$.
Then $\Var(f)=\varphi_\calA^{-1}(X_\calA\cap\Lambda_f)$.
The image under $\varphi_\calA$ of the nonlinear hypersurface $\Var(f)$ is a hyperplane section of
$\varphi_\calA((\CC^\times)^n)$.

Let us specialize these definitions to the objects in this paper.
Let  $\Phi$ be a dispersion polynomial for an operator on a periodic graph, let $\defcolor{\calA}$ be its support, and
let $\defcolor{A}\vcentcolon=\calN(\Phi)=\conv(\calA)$ be its Newton polytope. 
As $\ZZ^d$ acts cocompactly on $\Gamma$,  $\Sat(\calA)=\ZZ^{d+1}$ and $\ZZ\calA$ has full rank $d{+}1$.
(In fact they are equal in all examples that we have considered.)

We will write \defcolor{$t$} for the indeterminates $(z,\lambda)$ on $(\CC^\times)^d\times\CC$.
As $\lambda$ only appears with a positive exponent, the map $\varphi_\calA$~\eqref{Eq:varphi_A} extends to 
$(\CC^\times)^d\times\CC$, 
 \begin{equation}\label{Eq:newVarphi_A}
   \varphi_\calA\ \colon\ (\CC^\times)^d\times\CC\ni t\ \longmapsto\
          [ t^a  \mid  a \in\calA]\in\PP^\calA\,.
 \end{equation}
We obtain the projective toric variety $X_\calA$ as the closure of the image of $\varphi_\calA$.
Write $\defcolor{X^\circ_\calA}$ for the image $\varphi_\calA((\CC^\times)^d\times\CC)$ and
$\defcolor{\partial X_\calA}\vcentcolon= X_\calA\smallsetminus X^\circ_\calA$ for its complement.
Then
 \begin{equation}\label{Eq:Boundary_Decomp}
   X_\calA\ =\ X_\calA^\circ\; \coprod\; \partial X_\calA\,.
 \end{equation}

The \demph{compactified Bloch variety} is $\defcolor{\overline{\BV}}\vcentcolon= X_\calA\cap\Lambda_\Phi$, where
$\Lambda_\Phi$ is the hyperplane corresponding to the dispersion polynomial $\Phi$.
It is the projective closure of $\varphi_\calA(\BV)$.
We have
\[
   \varphi_\calA(\BV\,)\ =\ \varphi_\calA((\CC^\times)^d\times\CC)) \cap \Lambda_\Phi \ =\ \overline{\BV}\cap X^\circ_\calA\ .
\]
Each polynomial $f\in\Psi$ in the critical point equations~\eqref{Eq:CPE} has support a subset of $\calA$,
and therefore corresponds to a hyperplane $\Lambda_f$.
Let \defcolor{$L_\Psi$} be the intersection of these hyperplanes.
Then the critical points of the Bloch variety are $\varphi^{-1}(X^\circ_\calA\cap L_\Psi)$.
We have the inequality,
 \begin{equation}\label{Eq:ineq}
    \# \bigl( X^\circ_\calA\cap L_\Psi \bigr)\ \leq\    \# \bigl( X_\calA \cap L_\Psi\bigr)\,,
 \end{equation}
 where we count isolated points with their multiplicities.

%%%%%%%%%%%%%%%%%%%%%%%%%%%%%%%%%%%%%%%%%%%%%%%%%%%%%%%%%%%%%%%%%%%%%%%%%%%%%%%%%%%%%%%%%%%%%%%%%%%%
\begin{Remark}\label{Rem:intMult}
We recall some basics about these intersection multiplicities.
There are many sources; for example, this may be extracted from Sections 1.6 and 2.1 of~\cite{Fu96b} with full
justifications found in the books~\cite{EH3264,Fu84a}.

Let $\defcolor{Y}\subset\PP^n$ be a projective variety, and let $\defcolor{L}\subset\PP^n$ be a linear subspace of dimension
$n{-}\dim(Y)$, the \demph{codimension} of $Y$.
We expect that $Y\cap L$ is zero-dimensional, in which case we call $Y\cap L$ a \demph{linear section} of $Y$
and say that the intersection is \demph{proper}.
Bertini's Theorem asserts that there is a Zariski-open subset in the Grassmannian of linear subspaces of $\PP^n$ of
dimension $n-\dim(Y)$ such that the intersection $Y\cap L$ is transverse (and thus also zero-dimensional).
Moreover, the number of points in such a transverse linear section is independent of $L$ and is called the \demph{degree} of $Y$,
\defcolor{$\deg(Y)$}.
This independence from $L$ extends to all linear sections, if we count points with appropriate multiplicities.

Let $y\in Y\cap L$.
The local \demph{intersection multiplicity} \defcolor{$m(y;Y,L)$} is the dimension (as a complex vector space) of the local
ring of the intersection $Y\cap L$ supported at the point $y$.
This may be computed in any affine neighborhood of $y$ using local equations for $Y$ and $L$.
It has the following dynamic definition.
If $L(s)$ for $s\in [-\epsilon,\epsilon]$ is a continuous family of linear spaces with $\lim_{s\to 0}L(s)=L$ such that for $s\neq 0$, 
$Y\cap L(s)$ is transverse, then $m(y;Y,L)$ is the number of points in $Y\cap L(s)$ that converge to $y$ as $s\to 0$.
\end{Remark}
%%%%%%%%%%%%%%%%%%%%%%%%%%%%%%%%%%%%%%%%%%%%%%%%%%%%%%%%%%%%%%%%%%%%%%%%%%%%%%%%%%%%%%%%%%%%%%%%%%%%

We record the fundamental global property of these local multiplicities.

%%%%%%%%%%%%%%%%%%%%%%%%%%%%%%%%%%%%%%%%%%%%%%%%%%%%%%%%%%%%%%%%%%%%%%%%%%%%%%%%%%%%%%%%%%%%%%%%%%%%
\begin{Proposition}\label{P:Degree_Section}
  Suppose that $Y\cap L$ is a linear section of a projective variety $Y$.
  Then
  \begin{equation}\label{Eq:degree_multiplicities}
      \deg(Y)\ =\ \sum_{y\in Y\cap L} m(y;Y,L)\,.
  \end{equation}
\end{Proposition}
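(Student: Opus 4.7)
The plan is to derive the identity from the dynamic definition of intersection multiplicity already given in Remark~\ref{Rem:intMult}, combined with the defining property of $\deg(Y)$ as the cardinality of a transverse linear section. The key input is that $Y$ is projective, hence compact, which forces conservation of intersection numbers under deformation of $L$.

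First, I would invoke Bertini's Theorem (as stated in Remark~\ref{Rem:intMult}) to find a dense open subset $U$ of the Grassmannian of linear subspaces of dimension $n-\dim(Y)$ in $\PP^n$ such that for every $L' \in U$ the intersection $Y \cap L'$ is transverse. By definition of degree, $|Y\cap L'|=\deg(Y)$ for all such $L'$. Then I would choose a continuous path $L(s)$ of linear subspaces of dimension $n-\dim(Y)$ for $s\in[-\epsilon,\epsilon]$ with $L(0)=L$ and $L(s)\in U$ for $0<|s|\leq\epsilon$; such a path exists because the space of such linear subspaces is a connected (irreducible) Grassmannian and $U$ is Zariski-dense.

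Second, I would apply the dynamic description of intersection multiplicity to each isolated point $y\in Y\cap L$: choose pairwise disjoint open neighborhoods $U_y\subset\PP^n$ of the finitely many points $y\in Y\cap L$, shrinking $\epsilon$ further if needed, so that $|Y\cap L(s)\cap U_y|=m(y;Y,L)$ for all $0<|s|\leq\epsilon$. Summing over $y\in Y\cap L$ gives
\[
   \sum_{y\in Y\cap L} m(y;Y,L)\ =\ \Bigl|\,Y\cap L(s)\cap \bigcup_{y\in Y\cap L} U_y\,\Bigr|
\]
for small $s\neq 0$, and the right-hand side is bounded above by $|Y\cap L(s)|=\deg(Y)$.

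The main obstacle — and the only nontrivial step — is the reverse inequality, i.e.\ showing that no points of $Y\cap L(s)$ escape from the union $\bigcup_y U_y$ as $s\to 0$. This is the conservation of count in a flat family, and it uses the projectivity (compactness) of $Y$ in a critical way. Concretely, one considers the incidence variety $Z=\{(y,s)\in Y\times[-\epsilon,\epsilon]:y\in L(s)\}$ together with its projection $\pi\colon Z\to[-\epsilon,\epsilon]$; since $Y$ and the universal incidence in the Grassmannian are closed, $\pi$ is proper. If some sequence $s_k\to 0$ had a point $y_k\in Y\cap L(s_k)$ outside $\bigcup_y U_y$, a subsequence would converge (by compactness) to a limit $y^\star \in Y\cap L\smallsetminus\bigcup_y U_y$, contradicting that $Y\cap L$ is contained in $\bigcup_y U_y$. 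Hence for all small $s\neq 0$, $Y\cap L(s)\subset\bigcup_y U_y$, so the two counts agree and yield~\eqref{Eq:degree_multiplicities}. Alternatively, one can avoid the topological argument by citing the standard fact that the degree of a finite $0$-dimensional scheme is locally constant in a flat family over a reduced connected base, which is precisely the content of the references~\cite{EH3264,Fu84a} mentioned in Remark~\ref{Rem:intMult}.
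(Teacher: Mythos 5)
The paper does not actually prove Proposition~\ref{P:Degree_Section}: it is recorded as a standard fact of intersection theory, with Remark~\ref{Rem:intMult} pointing the reader to~\cite{Fu96b} (Sections~1.6 and~2.1) and to~\cite{EH3264,Fu84a} for full justifications. Your proposal therefore does not so much diverge from the paper's argument as supply one where the paper only cites. What you give is a correct sketch of the classical conservation-of-number argument, and you correctly isolate the only genuinely nontrivial step, namely that no intersection points escape the chosen neighborhoods as $L(s)\to L$, which you handle via compactness of $Y$ (equivalently, properness of the incidence projection) or, alternatively, via constancy of length in a flat family over a reduced connected base. Two small points worth tightening. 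First, the existence of a real arc $L(s)$, $s\in[-\epsilon,\epsilon]$, with $L(0)=L$ and $L(s)\in U$ for $s\neq 0$ deserves a word: the complement of the Bertini-open set $U$ is a proper complex subvariety of the Grassmannian, hence of real codimension at least $2$, so a generic real-analytic arc through $L$ meets it only at $s=0$; one can, for instance, take a complex line in the Grassmannian through $L$ not contained in the complement and restrict to a small real arc avoiding the finitely many other bad parameters. Second, properness of $\pi\colon Z\to[-\epsilon,\epsilon]$ is immediate once one notes that $Z$ is closed in the compact space $Y\times[-\epsilon,\epsilon]$; invoking closedness of the universal incidence is not needed. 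With those touches your outline is a faithful expansion of what the references prove, and it is compatible with the dynamic characterization of $m(y;Y,L)$ the paper adopts.
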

%%%%%%%%%%%%%%%%%%%%%%%%%%%%%%%%%%%%%%%%%%%%%%%%%%%%%%%%%%%%%%%%%%%%%%%%%%%%%%%%%%%%%%%%%%%%%%%%%%%%

We use this to refine Proposition~\ref{P:SolutionsAtInfinity}, and deduce a step towards Theorem~\ref{Th:main}.

%%%%%%%%%%%%%%%%%%%%%%%%%%%%%%%%%%%%%%%%%%%%%%%%%%%%%%%%%%%%%%%%%%%%%%%%%%%%%%%%%%%%%%%%%%%%%%%%%%%%
\begin{Corollary}\label{Cor:CPDeg}
  Let $\Gamma$ be a periodic graph, $\calA$ the support of its generic dispersion polynomial, and $A=\conv(\calA)$ its
  Newton polytope.
  Suppose that $\Phi(z,\lambda)$ is the dispersion polynomial of a general operator on $\Gamma$ with critical point
  equations $\Psi$.
  If the intersection $X_\calA\cap L_\Psi$ of the toric variety $X_\calA$ with the linear space $L_\Psi$ of\/ $\PP^\calA$
  corresponding to $\Psi$
  is a linear section, then
  \begin{equation}\label{Eq:CPDeg}
    \cpdeg(\Gamma)\ =\ \nvol(A)\ -\
    [\ZZ^{d+1}\colon \ZZ\calA]\sum_{x\in\partial X_\calA\cap L_\Psi} m(x; X_\calA,L_\Psi)\ .
  \end{equation}
\end{Corollary}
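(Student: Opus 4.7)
The plan is to translate the count of critical points (with multiplicity) into a count of intersection points on the projective toric variety $X_\calA$, apply Proposition~\ref{P:Degree_Section} to the degree of $X_\calA$, and separate the contribution from the interior $X_\calA^\circ$ from that of the boundary $\partial X_\calA$.

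First, by Lemma~\ref{Lem:quasihomogeneous}$(iv)$ (combined with the computation of $\Phi$) each polynomial $f$ in the critical point system $\Psi$ has support contained in $\calA$, so its vanishing locus corresponds to a hyperplane $\Lambda_f \subset \PP^\calA$; set $L_\Psi = \bigcap_{f \in \Psi} \Lambda_f$. Using the identity $\Var(f) = \varphi_\calA^{-1}(X_\calA \cap \Lambda_f)$ for each $f \in \Psi$, the set of critical points of $\lambda$ on the Bloch variety is $\varphi_\calA^{-1}(X_\calA^\circ \cap L_\Psi)$. Since $\Sat(\calA) = \ZZ^{d+1}$, Proposition~\ref{P:varphi_calA_facts} shows that $\varphi_\calA$ is a finite \'etale covering of degree $[\ZZ^{d+1}:\ZZ\calA]$ onto its image. \'Etaleness identifies the local ring of $\Psi$ at a critical point $(z_0,\lambda_0)$ with the local ring of $X_\calA \cap L_\Psi$ at $\varphi_\calA(z_0,\lambda_0)$, so critical-point multiplicities coincide with the local intersection multiplicities $m(\,\cdot\,; X_\calA, L_\Psi)$. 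Summing over fibers,
\[
\cpdeg(\Gamma)\ =\ [\ZZ^{d+1}:\ZZ\calA] \sum_{y \in X_\calA^\circ \cap L_\Psi} m(y; X_\calA, L_\Psi).
\]

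Second, by hypothesis $X_\calA \cap L_\Psi$ is a linear section, so Proposition~\ref{P:Degree_Section} gives $\deg(X_\calA) = \sum_{y \in X_\calA \cap L_\Psi} m(y; X_\calA, L_\Psi)$. Using the disjoint decomposition~\eqref{Eq:Boundary_Decomp}, I split this into interior and boundary contributions:
\[
\sum_{y \in X_\calA^\circ \cap L_\Psi} m(y; X_\calA, L_\Psi)\ =\ \deg(X_\calA)\ -\ \sum_{x \in \partial X_\calA \cap L_\Psi} m(x; X_\calA, L_\Psi).
\]
Substituting into the previous display and invoking the classical formula $\deg(X_\calA) = \nvol(A)/[\Sat(\calA) : \ZZ\calA]$ for the degree of a projective toric variety (treated in the references on toric varieties cited at the start of the section), which under $\Sat(\calA) = \ZZ^{d+1}$ yields $[\ZZ^{d+1}:\ZZ\calA] \cdot \deg(X_\calA) = \nvol(A)$, delivers~\eqref{Eq:CPDeg}.

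The main technical point is in the first step: verifying that the \'etale covering argument applies uniformly. The extended map $\varphi_\calA$ of~\eqref{Eq:newVarphi_A} is a finite \'etale cover of degree $[\ZZ^{d+1}:\ZZ\calA]$ away from the locus $\lambda=0$; generically (for a suitable choice of parameters within the dense open set defining $\cpdeg(\Gamma)$) no critical point satisfies $\lambda = 0$, so the fiber count is uniform on the relevant preimages. Once this is established, the correspondence between local multiplicities on $(\CC^\times)^d \times \CC$ and on $X_\calA^\circ$ is immediate, and the remainder of the argument is a bookkeeping computation with degrees and indices.
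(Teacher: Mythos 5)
Your proof is correct and takes essentially the same approach as the paper's: decompose $X_\calA = X_\calA^\circ \sqcup \partial X_\calA$, apply Proposition~\ref{P:Degree_Section}, use Kushnirenko's formula $\nvol(A) = [\ZZ^{d+1}:\ZZ\calA]\cdot\deg(X_\calA)$, and identify $[\ZZ^{d+1}:\ZZ\calA]\sum_{X_\calA^\circ \cap L_\Psi} m(\cdot)$ with the critical point degree via the degree of $\varphi_\calA$. The paper states these steps more tersely, but your more explicit treatment of the covering degree and the matching of local multiplicities fills in the same reasoning rather than diverging from it.
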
  
%%%%%%%%%%%%%%%%%%%%%%%%%%%%%%%%%%%%%%%%%%%%%%%%%%%%%%%%%%%%%%%%%%%%%%%%%%%%%%%%%%%%%%%%%%%%%%%%%%%%
\begin{proof}
  The decomposition $X_\calA= X^\circ_{\calA} \coprod \partial X_\calA$~\eqref{Eq:Boundary_Decomp}
  and Proposition~\ref{P:Degree_Section} imply that
  \[
  \deg X_\calA\ =\ \sum_{x\in  X^\circ_\calA\cap L_\Psi} m(x; X_\calA,L_\Psi)\ +\
      \sum_{x\in\partial X_\calA\cap L_\Psi} m(x; X_\calA,L_\Psi)\ .
  \]
  By Kushnirenko's Theorem~\cite{Kushnirenko}, we have $\nvol(A)=[\ZZ^{d+1}\colon \ZZ\calA]\cdot\deg(X_\calA)$,
  and by Proposition~\ref{P:varphi_calA_facts}, the map $\varphi_\calA\colon(\CC^\times)^d\times\CC\to X^\circ_\calA$
  has degree $[\ZZ^{d+1}\colon \ZZ\calA]$, so that
  \[
    [\ZZ^{d+1}\colon \ZZ\calA]\sum_{x\in X^\circ_\calA\cap L_\Psi} m(x; X_\calA,L_\Psi)
  \]
  counts the critical points of the Bloch variety of $\Phi(z,\lambda)$.
  As $\Phi(z,\lambda)$ is general, this is the critical point degree of $\Gamma$.
  This implies the formula~\eqref{Eq:CPDeg}.
\end{proof}
%%%%%%%%%%%%%%%%%%%%%%%%%%%%%%%%%%%%%%%%%%%%%%%%%%%%%%%%%%%%%%%%%%%%%%%%%%%%%%%%%%%%%%%%%%%%%%%%%%%%

%%%%%%%%%%%%%%%%%%%%%%%%%%%%%%%%%%%%%%%%%%%%%%%%%%%%%%%%%%%%%%%%%%%%%%%%%%%%%%%%%%%%%%%%%%%%%%%%%%%%
\begin{Remark}\label{Rem:foreshadowing}
  We study the structure of the toric variety $X_\calA$ in the remainder of this section, including
  points in a linear section  $\partial X_\calA\cap L_\Psi$ and local multiplicities $m(x;X_\calA,L_\Psi)$.
  The remaining two sections are devoted to understanding two types of  contributions to the sum in~\eqref{Eq:CPDeg}.
  In Section~\ref{S:vertical}, Corollary~\ref{C:verticalContribution} gives a contribution $\Nvert$ from vertical faces of the
  Newton polytope $A$, and in Section~\ref{S:Structural},  we identify a contribution $\Ndisc$
  coming from oblique faces whose initial  graph is disconnected.
  By Theorem~\ref{L:vertical} and Lemma~\ref{Lem:asymptoticallydisconnected} the contributions arise from structural
  properties of the graph $\Gamma$.  
  By Corollaries~\ref{C:verticalContribution} and~\ref{C:obliqueContribution} we have the inequality,
  \[
     \sum_{x\in\partial X_\calA\cap L_\Psi} m(x; X_\calA,L_\Psi)\ \geq\ \Nvert\ +\ \Ndisc\,.  \qedhere
  \]
\end{Remark}
%%%%%%%%%%%%%%%%%%%%%%%%%%%%%%%%%%%%%%%%%%%%%%%%%%%%%%%%%%%%%%%%%%%%%%%%%%%%%%%%%%%%%%%%%%%%%%%%%%%%

We deduce our main theorem from this.

%%%%%%%%%%%%%%%%%%%%%%%%%%%%%%%%%%%%%%%%%%%%%%%%%%%%%%%%%%%%%%%%%%%%%%%%%%%%%%%%%%%%%%%%%%%%%%%%%%%%
\begin{Theorem}\label{Th:main}
  Let $\Gamma$ be a $\ZZ^d$-periodic graph with $d\leq 3$.
  Its critical point degree satisfies the inequality,
  \[
    \cpdeg(\Gamma)\ \leq\ \nvol(A)\ -\ [\ZZ^{d+1}\colon \ZZ\calA] (\Nvert\ +\ \Ndisc)\,.
  \]
\end{Theorem}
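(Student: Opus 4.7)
The plan is to combine Corollary~\ref{Cor:CPDeg} with the boundary-contribution estimates forecast in Remark~\ref{Rem:foreshadowing}. The key identity that will do most of the work is
\[
  \cpdeg(\Gamma)\ =\ \nvol(A)\ -\ [\ZZ^{d+1}\colon\ZZ\calA]\!\!\sum_{x\in\partial X_\calA\cap L_\Psi}\!\!\! m(x;X_\calA,L_\Psi),
\]
so the proof reduces to (a)~checking the hypothesis of Corollary~\ref{Cor:CPDeg} that $X_\calA\cap L_\Psi$ is a proper (i.e.\ zero-dimensional) linear section for a general operator, and (b)~bounding the right-hand sum below by $\Nvert+\Ndisc$.

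For part (a), I would first dispose of the case $\cpdeg(\Gamma)=0$, which is vacuous. Otherwise the critical point degree is the number of critical points of a general operator on $\Gamma$, which is finite by definition. Since every critical point of $\lambda$ on $\BV$ maps to a point of $X_\calA^\circ\cap L_\Psi$ under $\varphi_\calA$ with finite fibers (Proposition~\ref{P:varphi_calA_facts}), the intersection $X_\calA^\circ\cap L_\Psi$ is zero-dimensional. The restriction $d\leq 3$ enters via Remark~\ref{R:SingularLocus}: for $d\leq 3$ a general operator has only finitely many singular/critical points, which is what allows the generic critical locus to remain discrete. I would then use the genericity of the parameters together with a Bertini-type argument on $X_\calA$ to conclude that the remaining part $\partial X_\calA\cap L_\Psi$ is also zero-dimensional, so that $X_\calA\cap L_\Psi$ is a linear section and Corollary~\ref{Cor:CPDeg} applies verbatim.

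For part (b), the decomposition $\partial X_\calA=\coprod_{F\subsetneq A}\calO_F$ into torus orbits indexed by proper faces of $A$ partitions the boundary sum according to which face $F$ supports each boundary point. Corollary~\ref{C:verticalContribution} (from Section~\ref{S:vertical}) asserts that the aggregate contribution to this sum coming from orbits associated to vertical faces is at least $\Nvert$, and Corollary~\ref{C:obliqueContribution} (from Section~\ref{S:Structural}) asserts that the aggregate contribution from orbits associated to oblique faces whose initial graph is disconnected is at least $\Ndisc$. Since vertical and oblique faces are disjoint families by definition, these contributions add and yield
\[
  \sum_{x\in\partial X_\calA\cap L_\Psi} m(x;X_\calA,L_\Psi)\ \geq\ \Nvert+\Ndisc.
\]
Substituting this inequality into the equality from Corollary~\ref{Cor:CPDeg} and preserving the sign (note the minus sign) immediately yields the claimed bound.

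The step I expect to be the main obstacle is (a), specifically ruling out a positive-dimensional component of $X_\calA\cap L_\Psi$ supported on $\partial X_\calA$ for $d\leq 3$. On $X_\calA^\circ$ this is automatic since critical points of a general operator are isolated, but at infinity one must argue that the facial critical subsystems $\ini_\eta\Psi$, while possibly having solutions by Proposition~\ref{P:SolutionsAtInfinity}$(iii)$, do not pick up positive-dimensional components once restricted to the appropriate torus orbits $\calO_F$. In the range $d\leq 3$ this is within reach because each orbit $\calO_F$ has dimension $\dim(F)\leq d$, and the expected codimension of the intersection with $L_\Psi$ is at least $d$, so with generic parameters Bertini/Kleiman-type genericity together with the structural control provided by Corollary~\ref{Cor:VerticalSubSystems} and Lemma~\ref{Lem:asymptoticallydisconnected} should suffice.
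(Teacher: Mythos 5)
Your proposal matches the paper's approach: the paper's entire proof of Theorem~\ref{Th:main} reads ``This is a consequence of Corollary~\ref{Cor:CPDeg} and the inequality of Remark~\ref{Rem:foreshadowing},'' which is exactly your part~(b), and your part~(a) is simply making explicit the hypothesis of Corollary~\ref{Cor:CPDeg} that the paper leaves implicit. One small caution: the $\cpdeg(\Gamma)=0$ case is not literally vacuous --- you would still need $\nvol(A)\geq[\ZZ^{d+1}\colon\ZZ\calA](\Nvert+\Ndisc)$, and the quantities $\Nvert$, $\Ndisc$ are themselves defined via sums over $\calO_\calF\cap L$ which presuppose that these intersections are finite --- but the paper does not address this degenerate case either (indeed the entire discussion before Lemma~\ref{Lem:Vertical} simply \emph{assumes} $X_\calA\cap L$ is proper), so your honest flagging of the proper-linear-section hypothesis as the genuine obstacle is a fair reading of what the paper glosses over rather than a defect in your argument relative to the source.
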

%%%%%%%%%%%%%%%%%%%%%%%%%%%%%%%%%%%%%%%%%%%%%%%%%%%%%%%%%%%%%%%%%%%%%%%%%%%%%%%%%%%%%%%%%%%%%%%%%%%%
The version stated in the Introduction is when $\ZZ\calA=\ZZ^{d+1}$.
%%%%%%%%%%%%%%%%%%%%%%%%%%%%%%%%%%%%%%%%%%%%%%%%%%%%%%%%%%%%%%%%%%%%%%%%%%%%%%%%%%%%%%%%%%%%%%%%%%%%
\begin{proof}
  This is a consequence of Corollary~\ref{Cor:CPDeg} and the inequality of Remark~\ref{Rem:foreshadowing}.
\end{proof}

%%%%%%%%%%%%%%%%%%%%%%%%%%%%%%%%%%%%%%%%%%%%%%%%%%%%%%%%%%%%%%%%%%%%%%%%%%%%%%%%%%%%%%%%%%%%%%%%%%%%
%
%
%
\subsection{Structure of $X_\calA$}\label{SS:Facial}
The inequality~\eqref{Eq:ineq} is strict when $\partial X_\calA \cap L_\Psi \neq \emptyset$.
Points in $\partial X_\calA \cap L_\Psi$ are \demph{asymptotic critical points}.
Proposition~\ref{P:SolutionsAtInfinity} concerns the existence of asymptotic critical points.
Our results involve counting them with their multiplicities.

To study the asymptotic critical points, we will need local coordinates for $X_\calA$ near them, expressions
for the critical point equations in these local coordinates, and a determination of the multiplicities of points in
$\partial X_\calA$.

The torus $(\CC^\times)^{d+1}$ acts on $\PP^\calA$ through the map $\varphi_\calA$~\eqref{Eq:newVarphi_A}.
As $X_\calA$ is the closure of an orbit, $(\CC^\times)^{d+1}$ acts on $X_\calA$.
As it is a toric variety, $X_\calA$ is a disjoint union of finitely many orbits of $(\CC^\times)^{d+1}$.
There is one orbit for each face $F$ of the polytope $A$.
Each orbit has an affine open neighborhood in $X_\calA$ which is itself a toric variety, and which has a canonical
ring of polynomial functions---its coordinate ring.
Each such coordinate ring is an easily-described subalgebra of the ring $\CC[t^\pm]$ of Laurent polynomials in $t$.

The face $A$ corresponds to the orbit $\varphi_\calA((\CC^\times)^{d+1})$, and 
$\varphi_\calA((\CC^\times)^d\times\{0\})$ is the orbit for the base  ({\it i.e.}\ $\lambda=0$).
These two orbits form the image $\varphi_\calA((\CC^\times)^d\times\CC)$ of $\varphi_\calA$, which is $X^\circ_\calA$.
The boundary $\partial X_\calA$ is the union of the remaining orbits.

Let \defcolor{$F$} be a proper face of $A$ that is not its base.
Write $\defcolor{\calF}\vcentcolon= F\cap\calA$ for the points of $\calA$ that lie along $F$.
As $A=\conv(\calA)$, we have  $F=\conv(\calF)$, the convex hull of $\calF$.
We define two subsets of the projective space $\PP^\calA$ corresponding to $F$.
Let $\defcolor{\PP^\calF}\subset\PP^\calA$ be the subspace spanned by the coordinates indexed by
$\calF$ so that $\PP^\calF=\{x\in\PP^\calA\mid x_a=0\mbox{ for }a\not\in \calF\}$, and set
$\defcolor{U_\calF}\vcentcolon=\{x\in\PP^\calA\mid x_b\neq 0\mbox{ for }b\in \calF\}$.
Then $\PP^\calF$ is a projective subspace of $\PP^\calA$ and $U_\calF$ is an affine open subset of $\PP^\calA$.

The set $\calF$ gives a map analogous to $\varphi_\calA$,
 \begin{equation}\label{Eq:calF}
   \varphi_\calF\ \colon\ (\CC^\times)^{d+1}\ \ni\ t\ \longmapsto\
           [ t^a \mid a\in \calF]\ \in\ \PP^\calF\ \subset\ \PP^\calA\,.
 \end{equation}
Write \defcolor{$\calO_\calF$} for its image $\varphi_\calF((\CC^\times)^{d+1})$, which is the orbit corresponding to the
face $\calF$. 
Its closure is the toric variety $\defcolor{X_\calF}$, which is a subvariety of $X_\calA$.
We summarize some consequences of these definitions and Proposition~\ref{P:varphi_calA_facts}, which describe the
large-scale structure of $X_\calA$ as a variety with an action by $(\CC^\times)^{d+1}$.

%%%%%%%%%%%%%%%%%%%%%%%%%%%%%%%%%%%%%%%%%%%%%%%%%%%%%%%%%%%%%%%%%%%%%%%%%%%%%%%%%%%%%%%%%%%%%%%%%%%%
\begin{DefProp}\label{DP:toricSets}
  Let $F$ be a non-base proper face of $A$.
  Define $\defcolor{V_\calF}\vcentcolon= U_\calF\cap X_\calA$.
  Then  $V_\calF$ is an affine toric variety and an open neighborhood in $X_\calA$ of the orbit $\calO_\calF$, and 
  \begin{enumerate}[$(i)$]
   \item $\PP^\calF\cap X_\calA = X_\calF$, $\calO_\calF=X_\calF\cap V_\calF$, and  $\calO_\calF\simeq(\CC^\times)^{\dim(F)}$.
   \item
     The sets $\calO_\calF$, $X_\calF$, and $V_\calF$ are all $(\CC^\times)^{d+1}$-stable subsets of $X_\calA$ with
     $\calO_\calF$ an orbit. 
   \item
     $X_\calA$ is the disjoint union of $X^\circ_\calA$ and the orbits $\calO_\calF$ for $F$ a non-base proper face of $A$.
 \end{enumerate}
\end{DefProp}
%%%%%%%%%%%%%%%%%%%%%%%%%%%%%%%%%%%%%%%%%%%%%%%%%%%%%%%%%%%%%%%%%%%%%%%%%%%%%%%%%%%%%%%%%%%%%%%%%%%%

By $(iii)$, each asymptotic critical point lies on a unique orbit $\calO_\calF$, and may be studied in the neighborhood
$V_\calF$ of $\calO_\calF$. 
We describe the coordinate ring of $V_\calF$ as a subalgebra of $\CC[t^\pm]$, the coordinate ring of $(\CC^\times)^{d+1}$.
This will include the coordinate ring of $\calO_\calF$, and we explain what the critical point equations~\eqref{Eq:CPE}
become in both $V_\calF$ and $\calO_\calF$.

The monomials in $\CC[t^\pm]$ are identified with their exponent vectors---elements of $\ZZ^{d+1}$.
Subalgebras $R$ of $\CC[t^\pm]$ generated by monomials correspond to \demph{affine monoids}---subsets $M$ of $\ZZ^{d+1}$ that contain
$0$ and are closed under addition.
Here, $M=\{a\in\ZZ^{d+1}\mid t^a\in R\}$ and $R\simeq\CC[M]$ is the monoid algebra.
We describe the monoids for the coordinate rings of $V_\calF$ and $\calO_\calF$ of
Definition-Proposition~\ref{DP:toricSets}. 
Define
  \begin{equation}\label{Eq:M_calF}
   \defcolor{M_\calF}\ \vcentcolon=\ \NN\{a-b\mid a\in \calA,\ b\in \calF\}\,,
 \end{equation}
the affine monoid generated by the differences $a-b$ of integer vectors for $a\in\calA$ and $b\in\calF$.
Let $\defcolor{\ZZ\calF}\vcentcolon= M_\calF \cap -M_\calF$, which is the maximal subgroup of $M_\calF$.
It is also the integer span of differences of elements of $\calF$,
\[
    \ZZ\calF\ =\ \NN\{a-b\mid a, b\in \calF\}\ =\ \ZZ\{a-b\mid a, b\in \calF\}\,.
\]

%%%%%%%%%%%%%%%%%%%%%%%%%%%%%%%%%%%%%%%%%%%%%%%%%%%%%%%%%%%%%%%%%%%%%%%%%%%%%%%%%%%%%%%%%%%%%%%%%%%%
\begin{Proposition}\label{P:coordinateRings}
  The coordinate ring of $V_\calF$ is the monoid algebra $\CC[M_\calF]$ and the coordinate ring of 
  $\calO_\calF$ is $\CC[\ZZ\calF]$.
\end{Proposition}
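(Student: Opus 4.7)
The plan is to work in the affine open chart $U_\calF \subset \PP^\calA$, pull back by the monomial parameterization $\varphi_\calA$, and exploit the Zariski density of $\varphi_\calA((\CC^\times)^{d+1})$ in $X_\calA$. Density passes to Zariski opens, so $\varphi_\calA((\CC^\times)^{d+1}) \cap V_\calF$ is dense in $V_\calF = U_\calF \cap X_\calA$, and the kernel of the pullback $\varphi_\calA^*\colon \calO(U_\calF) \to \CC[t^\pm]$ equals the vanishing ideal of $V_\calF$ in $U_\calF$. Hence $\calO(V_\calF)$ is isomorphic to the image of $\varphi_\calA^*$. For the second claim we then use Definition-Proposition~\ref{DP:toricSets}$(i)$: $\calO_\calF = V_\calF \cap \PP^\calF$ is the closed subvariety of $V_\calF$ cut out by $\{x_a = 0 : a \notin \calF\}$, so $\calO(\calO_\calF)$ is the corresponding quotient of $\calO(V_\calF)$.

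To compute the image of $\varphi_\calA^*$, fix $b_0 \in \calF$ and dehomogenize with $y_a \vcentcolon= x_a/x_{b_0}$. Then $\calO(U_\calF)$ is the Laurent localization $\CC[y_a : a \in \calA \setminus \{b_0\}][y_b^{-1} : b \in \calF \setminus \{b_0\}]$, and $\varphi_\calA^*(y_a) = t^{a - b_0}$. The image is the $\CC$-subalgebra generated by $\{t^{a - b_0} : a \in \calA\} \cup \{t^{b_0 - b} : b \in \calF\}$, and since $t^{a - b} = t^{a - b_0} \cdot t^{b_0 - b}$ this equals $\CC[M_\calF]$. This proves the first claim.

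For $\calO_\calF$, quotienting $\CC[M_\calF]$ by the ideal $J$ generated by $\{t^{a - b_0} : a \in \calA \setminus \calF\}$ reduces the second claim to the statement that the maximal subgroup $M_\calF \cap (-M_\calF)$ of $M_\calF$ equals $\ZZ\calF$, together with the standard fact that quotienting any affine monoid algebra by the ideal spanned by its non-unit monomials yields the group algebra of its maximal subgroup. The key lemma is proved by picking $\eta \in \ZZ^{d+1}$ exposing $F$ and setting $r \vcentcolon= \min_A \eta$: each generator $a - b$ of $M_\calF$ satisfies $\eta \cdot (a - b) = \eta \cdot a - r \geq 0$, with equality iff $a \in \calF$. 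Hence $\eta$ is nonnegative on $M_\calF$, its zero set in $M_\calF$ is precisely $\ZZ\calF$, and any $m \in M_\calF \cap -M_\calF$ must satisfy $\eta \cdot m = 0$, giving the inclusion $M_\calF \cap -M_\calF \subseteq \ZZ\calF$ (the reverse inclusion is immediate since $\calF \subseteq \calA$). The main point requiring care is the density-to-ideal reduction used for $\calO(V_\calF)$; this is routine once one notes that $\CC[M_\calF]$ embeds in $\CC[t^\pm]$ and is therefore a domain.
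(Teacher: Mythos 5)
Your treatment of $V_\calF$ is correct and follows the paper's route through the affine chart $U_\calF$, with the density-to-kernel reduction made explicit (the paper simply asserts that the restricted generators give the coordinate ring). For $\calO_\calF$ you take a genuinely different route: the paper computes directly on the dense torus of $\PP^\calF$ by restricting its generators, while you realize $\calO(\calO_\calF)$ as the quotient $\CC[M_\calF]/J$ with $J = \langle t^{a-b_0} : a\in\calA\setminus\calF\rangle$ and appeal to the structure theory of affine monoid algebras.

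That route works, but as written it has a gap. The ``standard fact'' you invoke concerns the quotient of $\CC[M_\calF]$ by the ideal generated by \emph{all} non-unit monomials, namely $\langle t^m : m\in M_\calF\setminus\ZZ\calF\rangle$, whereas $J$ is generated only by the particular monomials $t^{a-b_0}$ with $a\in\calA\setminus\calF$. Your key lemma (the exposing-functional argument showing $M_\calF\cap(-M_\calF)=\ZZ\calF$, which the paper states but does not prove in the text preceding the Proposition) shows these generators are non-units, hence $J\subseteq\langle t^m : m\in M_\calF\setminus\ZZ\calF\rangle$; but it is the reverse containment that your reduction actually requires, and you never argue it. The missing step is short: given $m\in M_\calF\setminus\ZZ\calF$, write $m=\sum_i(a_i-b_i)$ with $a_i\in\calA$ and $b_i\in\calF$; since $m\notin\ZZ\calF$ some $a_i$, say $a_1$, lies outside $\calF$, and then $m-(a_1-b_0)=(b_0-b_1)+\sum_{i\geq 2}(a_i-b_i)\in M_\calF$, so $t^m=t^{a_1-b_0}\cdot t^{m-(a_1-b_0)}\in J$. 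With this containment supplied, your proof is complete.
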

%%%%%%%%%%%%%%%%%%%%%%%%%%%%%%%%%%%%%%%%%%%%%%%%%%%%%%%%%%%%%%%%%%%%%%%%%%%%%%%%%%%%%%%%%%%%%%%%%%%%
\begin{proof}
  On the affine open set $U_\calF$ no coordinate $x_b$ for $b\in\calF$ vanishes, thus
  $\{ x_a/x_b\mid a\in\calA \mbox{ and } b\in\calF\}$ are regular functions on $U_\calF$, and they generate its
  coordinate ring.
  Restricting them to $V_\calF$ gives $\{ t^{a-b}\mid a\in\calA \mbox{ and } b\in\calF\}$, which generates $M_\calF$,
  showing that $\CC[M_\calF]$ is the coordinate ring of $V_\calF$.

  Similarly, on the dense torus of $\PP^\calF$,  $\{ x_a/x_b\mid a,b\in\calF\}$ are regular functions that generate its
  coordinate ring.
  Their restrictions $\{ t^{a-b}\mid a,b\in\calF\}$ to $\calO_\calF$ generate $\ZZ\calF$, showing that $\CC[\ZZ\calF]$ is
  the coordinate ring of $\calO_\calF$.
\end{proof}
%%%%%%%%%%%%%%%%%%%%%%%%%%%%%%%%%%%%%%%%%%%%%%%%%%%%%%%%%%%%%%%%%%%%%%%%%%%%%%%%%%%%%%%%%%%%%%%%%%%%

%%%%%%%%%%%%%%%%%%%%%%%%%%%%%%%%%%%%%%%%%%%%%%%%%%%%%%%%%%%%%%%%%%%%%%%%%%%%%%%%%%%%%%%%%%%%%%%%%%%%
\begin{Example}\label{Ex:singularHouse}
In Figure~\ref{F:singularHouse}, we label some  faces of the polytope $A$ of Figure~\ref{F:Vertical}.
%%%%%%%%%%%%%%%%%%%%%%%%%%%%%%%%%%%%%%%%%%%%%%%%%%%%%%%%%%%%%%%%%%%%%%%%%%%%%%%%%%%%%%%%%%%%%%%%%%%%
\begin{figure}[htb]
\centering
  \begin{picture}(165,124)(-35,-4)
    \put(  0,0){\includegraphics{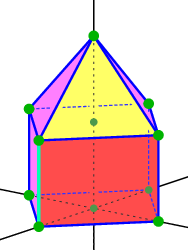}}
    \put(-35,50){\small$(1,0,1)$}
    \put(  4,-3){\small$(1,0,0)$}  \put(70, 1){\small$(0,1,0)$} 
    \put( 80,52){\small$(0,1,1)$} \put(49,101){\small$(0,0,2)$}
    \put( 75,90){$A$}
    \thicklines
      \put(0,54){{\color{White}\vector(1,0){14.8}}}
      \put(0,53.5){{\color{White}\vector(1,0){15.3}}}
      \put(0,53){{\color{White}\vector(1,0){15.8}}}
      \put(0,52.5){{\color{White}\vector(1,0){15.3}}}
      \put(0,52){{\color{White}\vector(1,0){14.8}}}
      \put(0,37){{\color{White}\vector(1,0){17}}}
      \put(0,36.5){{\color{White}\vector(1,0){17.5}}}
      \put(0,36){{\color{White}\vector(1,0){17.8}}}
      \put(0,35.5){{\color{White}\vector(1,0){17.5}}}
      \put(0,35){{\color{White}\vector(1,0){17}}}
    \thinlines
    \put( 1,53){\vector(1,0){14}}  
    \put(-20,33){\small$E$}\put(-9,36){\vector(1,0){26}}
    \put( 55,38){\small$G$}  \put( 48,73){\small$H$}
  \end{picture}
  \caption{A polytope $A$ with three indicated faces.}\label{F:singularHouse}
\end{figure}
%%%%%%%%%%%%%%%%%%%%%%%%%%%%%%%%%%%%%%%%%%%%%%%%%%%%%%%%%%%%%%%%%%%%%%%%%%%%%%%%%%%%%%%%%%%%%%%%%%%%
Let $E$ be the indicated face (the vertical edge containing $\calE\vcentcolon=\{(1,0,0), (1,0,1)\}$).
Translating $A$ by each endpoint of $E$ moves $E$ to a line segment along the vertical axis containing the
origin, showing that $\ZZ\calE$ is the set of integer points along the vertical axis.
We also have that 
\[
    M_\calE\ =\ \{ (x,y,\lambda)\in\ZZ^3 \mid -x-y\geq 0\mbox{ and } -x+y\geq 0\}\,.
\]
Note that $M_\calE = \ZZ\calE \oplus \tau_\calE$, where
  \begin{equation}\label{Eq:tau_F}
     \defcolor{\tau_\calE}\ \vcentcolon=\ \{(x,y,0)\in\ZZ^3\mid-x-y\geq 0\mbox{ and } -x+y\geq 0\}\,.
  \end{equation}

Now consider $G$, the facet exposed by $(-1,-1,0)$ and set $\calG\vcentcolon= G\cap\calA$.
Then
\[
   M_\calG\ =\ \{ (x,y,\lambda) \mid -x-y\geq 0 \}
   \qquad\mbox{and}\qquad
   \ZZ\calG\ =\ \{(x,y,\lambda)\mid x+y=0\}\ \simeq\ \ZZ^2\,,
\]
and we have $M_\calG = \ZZ\calG\oplus \NN\cdot(-1,0,0) \simeq \ZZ^2\oplus \NN$.

Similarly, the facet $H$ is exposed by $(-1,-1,-1)$. 
Setting $\calH\vcentcolon=H\cap\calA$, we have 
\[
   M_\calH\ =\ \{ (x,y,\lambda) \mid -x-y-\lambda\geq 0 \}
   \quad\mbox{and}\quad
   \ZZ\calH\ =\ \{(x,y,\lambda)\mid x+y+\lambda=0\}\ \simeq\ \ZZ^2\,,
\]
and we have $M_\calH = \ZZ\calH\oplus \NN\cdot(0,0,-1) \simeq \ZZ^2\oplus \NN$.
\end{Example}
%%%%%%%%%%%%%%%%%%%%%%%%%%%%%%%%%%%%%%%%%%%%%%%%%%%%%%%%%%%%%%%%%%%%%%%%%%%%%%%%%%%%%%%%%%%%%%%%%%%%

In all three cases considered in Example~\ref{Ex:singularHouse}, we have that
\[
    M_\calF\ =\ \ZZ\calF \oplus \tau_\calF\,.
\]
A direct sum decomposition does not hold in general~\cite[\S~2.2]{CLS}.

An \demph{ideal} of a monoid $M$ is a subset  $I\subset M$ such that $M+I\subset I$.
It is \demph{prime} when its complement $I^c\vcentcolon= M\smallsetminus I$ is a submonoid.
If $I\subset M$ is prime, then
$\defcolor{\langle I\rangle}\vcentcolon=\langle t^a\mid a\in I\rangle$ is a prime ideal of $\CC[M]$ with $\CC[M]/\langle I\rangle$
canonically isomorphic to the monoid algebra $\CC[I^c]$.

%%%%%%%%%%%%%%%%%%%%%%%%%%%%%%%%%%%%%%%%%%%%%%%%%%%%%%%%%%%%%%%%%%%%%%%%%%%%%%%%%%%%%%%%%%%%%%%%%%%%
\begin{Lemma}\label{L:CoordinateRings}
  For any face $F$ of $P$, we have canonical maps of monoid algebras,
  \begin{equation}\label{Eq:MonoidAlgebraMap}
    \CC[\ZZ\calF]\   \stackrel{\iota}{\hooklongrightarrow}\
    \CC[M_\calF]\ \stackrel{p}{\longtwoheadrightarrow}\ \CC[\ZZ\calF]
  \end{equation}
  which induce canonical maps of affine toric varieties,
  \[
    \calO_\calF\   \stackrel{p^*}{\hooklongrightarrow}\
    V_\calF\ \stackrel{\iota^*}{\longtwoheadrightarrow}\ \calO_\calF\,.
  \]  
\end{Lemma}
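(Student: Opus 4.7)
The plan is to build the monoid-algebra maps $\iota$ and $p$ directly from the structure of $M_\calF$ and then dualize, using Proposition~\ref{P:coordinateRings} to identify the resulting ring maps with morphisms of affine toric varieties between $V_\calF$ and $\calO_\calF$.

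First I would construct $\iota$: since $\ZZ\calF = M_\calF \cap (-M_\calF)$ is the maximal subgroup of $M_\calF$ (stated just after~\eqref{Eq:M_calF}), it is in particular a submonoid of $M_\calF$, and the monoid inclusion $\ZZ\calF \hookrightarrow M_\calF$ induces the injective $\CC$-algebra map $\iota$. For $p$, I would show that $I \vcentcolon= M_\calF \smallsetminus \ZZ\calF$ is a prime ideal of $M_\calF$: its complement $\ZZ\calF$ is trivially a submonoid, so the only content is $M_\calF + I \subset I$. If $a \in M_\calF$, $b \in I$, and $a+b \in \ZZ\calF$, then $-(a+b) \in M_\calF$ and hence $-b = a + (-(a+b)) \in M_\calF$, forcing $b \in M_\calF \cap (-M_\calF) = \ZZ\calF$, a contradiction. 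Therefore $\langle I \rangle$ is a prime ideal of $\CC[M_\calF]$, and the quotient map $\CC[M_\calF] \twoheadrightarrow \CC[M_\calF]/\langle I \rangle \cong \CC[\ZZ\calF]$ is the desired $p$. Checking on monomials $t^a$ with $a \in \ZZ\calF$ gives $p \circ \iota = \mathrm{id}_{\CC[\ZZ\calF]}$, so $\iota$ is a split injection.

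Dualizing via Proposition~\ref{P:coordinateRings}, the surjection $p$ yields a closed embedding $p^*\colon \calO_\calF \hookrightarrow V_\calF$, and the injection $\iota$ yields a morphism $\iota^*\colon V_\calF \to \calO_\calF$, with $\iota^* \circ p^* = \mathrm{id}_{\calO_\calF}$; in particular $\iota^*$ is surjective. The main (though minor) obstacle is confirming that the closed embedding $p^*$ coincides with the inclusion $\calO_\calF = X_\calF \cap V_\calF$ from Definition-Proposition~\ref{DP:toricSets}. This reduces to the fact that, for $a \in \calA$ and $b \in \calF$, one has $a - b \in \ZZ\calF$ if and only if $a \in \calF$: the nontrivial direction uses that if $a - b$ lies in the lattice parallel to the affine span of $F$, then $a$ lies in the supporting hyperplane of $F$ (which cuts out $F$ from $A$), and hence $a \in A \cap H = F$. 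With this identification, $\langle I \rangle$ matches the defining ideal of $X_\calF \cap V_\calF$ inside $V_\calF$, completing the argument.
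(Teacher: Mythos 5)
Your argument is correct and matches the paper's proof almost exactly: both identify $I=M_\calF\smallsetminus\ZZ\calF$, verify it is a prime ideal by the same $M_\calF\cap(-M_\calF)=\ZZ\calF$ contradiction, and then take $\iota$ to be induced by the submonoid inclusion and $p$ the quotient by $\langle I\rangle$. The extra observations you add — that $p\circ\iota=\mathrm{id}$ so the injection splits, and that $p^*$ really is the orbit inclusion because $a-b\in\ZZ\calF$ forces $a\in\mathrm{aff}(F)\cap A=F$ — are correct details that the paper leaves implicit, not a different route.
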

%%%%%%%%%%%%%%%%%%%%%%%%%%%%%%%%%%%%%%%%%%%%%%%%%%%%%%%%%%%%%%%%%%%%%%%%%%%%%%%%%%%%%%%%%%%%%%%%%%%%
\begin{proof}
  Let $I\vcentcolon=M_\calF\smallsetminus \ZZ\calF$.
  To see that $I$ is an ideal, suppose that there are $a\in I$ and $b\in M_\calF$ such that
  $a+b\not\in I$, so that $a+b\in\ZZ\calF$.
  As $\ZZ\calF$ is a group, $-(a+b)\in\ZZ\calF$, and thus $-a=-(a+b)+b$ of $\ZZ^{d+1}$ lies in $M_\calF$.
  But then  $a\in M_\calF\cap -M_\calF=\ZZ\calF$, a contradiction.
  As $M_\calF\smallsetminus I=\ZZ\calF$ is a submonoid of $M_\calF$, $I$ is prime.
  The map $\iota$ of~\eqref{Eq:MonoidAlgebraMap} is induced by the inclusion $\ZZ\calF\subset M_\calF$ and the map $p$ is the quotient
  map by the ideal $\langle I\rangle$.
\end{proof}
%%%%%%%%%%%%%%%%%%%%%%%%%%%%%%%%%%%%%%%%%%%%%%%%%%%%%%%%%%%%%%%%%%%%%%%%%%%%%%%%%%%%%%%%%%%%%%%%%%%%

Let $f$ be a polynomial with support $\calA$ and let $F$ be a face of $A$ exposed by $\eta$.
Recall that the initial form $\ini_\eta f$ of $f$ is the sum of its terms whose exponent vectors lie in $F$.

%%%%%%%%%%%%%%%%%%%%%%%%%%%%%%%%%%%%%%%%%%%%%%%%%%%%%%%%%%%%%%%%%%%%%%%%%%%%%%%%%%%%%%%%%%%%%%%%%%%%
\begin{Lemma}\label{L:polysInSubrings}
  Let $f$ be a polynomial with support $\calA$ and $B=\Var(f)\subset(\CC^\times)^{d+1}$ be the subvariety of the
  torus it defines.
  Write $\overline{B}$ for the closure of $\varphi(B)$ in the projective toric variety $X_\calA$.
  Let $F$ be  face of $A$ exposed by $\eta$ and $b\in\calF$ a monomial on $F$, we have
  \begin{enumerate}[$(i)$]
    \item $t^{-b}f\in\CC[M_\calF]$ and $\Var(t^{-b}f)\subset V_\calF$ equals $\overline{B}\cap V_\calF$.
    \item $t^{-b}\ini_\eta f=p(t^{-b}f)\in\CC[\ZZ\calF]$ and
      $\Var(t^{-b}\ini_\eta f)\subset\calO_\calF$ equals $\overline{B}\cap\calO_\calF$, where
      $p\colon \CC[M_\calF]\twoheadrightarrow\CC[\ZZ\calF]$ is the map of~\eqref{Eq:MonoidAlgebraMap}.
  \end{enumerate}
\end{Lemma}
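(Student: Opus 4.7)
The plan is to prove both parts in turn, each splitting into a monoid-algebra membership statement and a vanishing-locus identification.

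For part $(i)$, expand $t^{-b}f = \sum_{a\in\calA} c_a t^{a-b}$. Since $b \in \calF$, every exponent $a-b$ (with $a \in \calA$) lies in $M_\calF$ by the very definition~\eqref{Eq:M_calF}, so $t^{-b}f \in \CC[M_\calF]$. On the dense torus $\calO_\calA \subset V_\calF$ the factor $t^{-b}$ is a unit, so $\Var(t^{-b}f)$ agrees with $B$ there; hence $\overline{B}\cap V_\calF \subseteq \Var(t^{-b}f)\cap V_\calF$. For the reverse inclusion, observe that the summand $a = b$ contributes $c_b t^0 = c_b \neq 0$, so $t^{-b}f$ has a nonzero constant term in $\CC[M_\calF]$. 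For any torus-invariant prime divisor $D$ of $V_\calF$ with associated valuation $v_D$, the regularity of $t^{-b}f$ on $V_\calF$ forces $v_D(t^{-b}f) \geq 0$, while the constant term forces $v_D(t^{-b}f) \leq v_D(c_b) = 0$. Hence $v_D(t^{-b}f) = 0$ for every torus-invariant prime divisor, so $\Var(t^{-b}f)$ contains no component inside $V_\calF \setminus \calO_\calA$ and therefore equals the closure of its torus part, namely $\overline{B}\cap V_\calF$.

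For part $(ii)$, the initial form gives $t^{-b}\ini_\eta f = \sum_{a \in \calF} c_a t^{a-b}$; each exponent $a-b$ with $a,b \in \calF$ lies in $\ZZ\calF$, so $t^{-b}\ini_\eta f \in \CC[\ZZ\calF]$. To identify this with $p(t^{-b}f)$, recall from Lemma~\ref{L:CoordinateRings} that $p$ sends $t^c \mapsto 0$ for $c \in M_\calF \setminus \ZZ\calF$ and is the identity on $\CC[\ZZ\calF]$. For $a \in \calA$, the exponent $a-b$ lies in $\ZZ\calF$ iff $a$ lies in the affine span $\mathrm{aff}(F)$ of $F$; since $F$ is a face of $A = \conv(\calA)$, the standard convex-geometric identity $A \cap \mathrm{aff}(F) = F$ gives $a \in \calA \cap \mathrm{aff}(F) = \calF$. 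Thus $p$ retains exactly the terms with $a \in \calF$, yielding $p(t^{-b}f) = t^{-b}\ini_\eta f$. For the variety equality, Lemma~\ref{L:CoordinateRings} realizes $\calO_\calF$ as the closed subvariety of $V_\calF$ cut out by the prime ideal $\langle I\rangle$, so intersecting $\overline{B}\cap V_\calF = \Var(t^{-b}f)$ with $\calO_\calF$ gives $\overline{B}\cap\calO_\calF$ cut out inside $\calO_\calF$ by the image $p(t^{-b}f) = t^{-b}\ini_\eta f$.

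The main obstacle is the equidimensionality argument in part $(i)$, ruling out spurious components of $\Var(t^{-b}f)$ supported inside $V_\calF \setminus \calO_\calA$. The valuation-theoretic mechanism above---relying on normality of the affine toric variety $V_\calF$, so that torus-invariant prime divisors correspond bijectively to rays of the cone spanned by $M_\calF$, together with the nonzero-constant-term observation---handles this cleanly. A smaller but essential technical point is the face-theoretic identity $\calA \cap \mathrm{aff}(F) = \calF$ used to compute $p(t^{-b}f)$ in part $(ii)$.
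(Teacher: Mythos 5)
Your proposal takes a genuinely different route from the paper for part $(i)$. The paper works at the level of ideals: it observes that $\overline{B}\cap V_\calF$ is cut out by the contraction $\langle f\rangle\cap\CC[M_\calF]$ and then shows this contraction equals the principal ideal $(t^{-b}f)$ of $\CC[M_\calF]$, using the nonzero constant term of $t^{-c}f$ to argue that $t^a t^{-c}f\in\CC[M_\calF]$ forces $a\in M_\calF$. You instead argue directly at the level of vanishing loci, ruling out components of $\Var(t^{-b}f)$ in the boundary of $V_\calF$; this is a legitimate alternative, and in fact a cleaner version is available that avoids valuations entirely: by Krull's principal ideal theorem $\Var(t^{-b}f)$ is pure of codimension one, so any boundary component would be the closure of an orbit $\calO_\calE$ for $E$ a facet with $E\supseteq F$, but $t^{-b}f$ restricts to the nonzero polynomial $t^{-b}\ini_{\eta_E}f$ on $\calO_\calE$ and so cannot vanish there. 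Your part $(ii)$ is the same as the paper's (which is just ``apply $p$''), simply spelled out in more detail.

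However, there is a genuine error in the valuation step as written. You assert that ``the constant term forces $v_D(t^{-b}f)\leq v_D(c_b)=0$,'' but a discrete valuation satisfies the \emph{opposite} inequality: $v_D(\sum_i x_i)\geq\min_i v_D(x_i)$, not $\leq$. Having a term of small valuation gives a lower bound only when the minimum is achieved uniquely. Here all terms $c_a t^{a-b}$ with $a\in\calF$ have $v_D$-value $0$, so uniqueness fails and a priori cancellation could push $v_D(t^{-b}f)$ above $0$. What saves you is the special structure of torus-invariant valuations: for $v_D$ corresponding to a ray with primitive generator $u_D$, one has $v_D\bigl(\sum_m c_m t^m\bigr)=\min_{c_m\neq 0}\langle m,u_D\rangle$ with no cancellation, because the leading monomials reduce to \emph{distinct} characters on the residue torus of $v_D$ and therefore cannot sum to zero. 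You need to state and use this fact rather than the (false) general inequality. Two smaller points: $V_\calF$ need not be normal (the monoid $M_\calF$ may not be saturated), so the divisor–ray correspondence you invoke lives on the normalization, and you should say you are working there; and in part $(ii)$ the equivalence ``$a-b\in\ZZ\calF$ iff $a\in\mathrm{aff}(F)$'' is only an implication in the forward direction (since $b+\ZZ\calF$ may be a proper sublattice of $\mathrm{aff}(F)\cap\ZZ^{d+1}$), though this forward direction is all you use.
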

%%%%%%%%%%%%%%%%%%%%%%%%%%%%%%%%%%%%%%%%%%%%%%%%%%%%%%%%%%%%%%%%%%%%%%%%%%%%%%%%%%%%%%%%%%%%%%%%%%%%
\begin{proof}
  The principal ideal $\langle f\rangle$ generated by $f$ in $\CC[\ZZ^{d+1}]$ defines $B=\Var(f)$ in $(\CC^\times)^{d+1}$.
  Its closure $\overline{B}\cap V_\calF$ in $V_\calF$ is defined by $\langle f\rangle\cap \CC[M_\calF]$.

  Let $\alpha t^c$ with $\alpha\neq 0$ and $c\in\calF$ be a term of $f$.
  Then $t^{-c}f$ has a constant term $\alpha$ and $t^{-c}f\in\CC[M_\calF]$ by the definition of $M_\calF$, as $f$ has support $\calA$
  and $c\in\calF$.
  If $a\in M_\calF$, $t^a\in\CC[M_\calF]$ so that $t^at^{-c}f\in\CC[M_\calF]$.
  If $t^at^{-c}f\in\CC[M_\calF]$ for some
  $a\in \ZZ^{d{+}1}$,
%
%  This might need to be replaced by the bit 2 lines below.
%
%  $a\in \ZZ\calA$,
  then $\alpha t^a$ is a term of $t^at^{-c}f$ so that $a\in M_\calF$.
  This shows that $\langle f\rangle\cap \CC[M_\calF]$ is the principal ideal of $\CC[M_\calF]$ generated by $t^{-c}f$.

  If $b\in\calF$, then $t^{-b}f=t^{c-b}t^{-c}f$, which generates the same ideal in $\CC[M_\calF]$ as $t^{-c}f$, as
  $t^{b-c}$ is invertible in $\CC[M_\calF]$.
  This proves $(i)$.
  For $(ii)$, apply the map $p$ to $t^{-b}f$.  
\end{proof}
%%%%%%%%%%%%%%%%%%%%%%%%%%%%%%%%%%%%%%%%%%%%%%%%%%%%%%%%%%%%%%%%%%%%%%%%%%%%%%%%%%%%%%%%%%%%%%%%%%%%

We relate this to the critical point equations.
Recall that $\Phi$ is the dispersion polynomial~\eqref{Eq:dispersionPolynomial} with support $\calA$ and
Newton polytope $A$ and that $\Psi$ is the critical point equations~\eqref{Eq:CPE}.
For $b\in\ZZ^{d+1}$, let $\defcolor{t^{-b}\Psi}\vcentcolon=\{ t^{-b}f \mid f\in\Psi\}$, and the same for any
initial system $\ini_\eta\Psi$.

Given a face $F$ of $A$ exposed by $\eta$, we call $\defcolor{\ini_\eta\BV}\vcentcolon=\overline{\BV}\cap\calO_\calF$ the
\demph{initial Bloch variety}.

%%%%%%%%%%%%%%%%%%%%%%%%%%%%%%%%%%%%%%%%%%%%%%%%%%%%%%%%%%%%%%%%%%%%%%%%%%%%%%%%%%%%%%%%%%%%%%%%%%%%
\begin{Corollary}\label{C:Facial_Systems}
  Let $F$ be a face of $A$ exposed by a vector $\eta$.
  The intersection of the compactified Bloch variety with $V_\calF$ is defined by $t^{-b}\Phi$ for any $b\in\calF$.
  Its component along the orbit $\calO_\calF$, the initial Bloch variety, is defined by $t^{-b}\ini_\eta\Phi$.
  The set of critical points in $V_\calF$ is defined by $t^{-b}\Psi$ for any $b\in \calF$ and the asymptotic critical
  points lying along $\calO_\calF$ are defined by $t^{-b}\ini_\eta\Psi$ for any $b\in\calF$.
\end{Corollary}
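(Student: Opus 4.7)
The plan is to unpack the statement by applying Lemma~\ref{L:polysInSubrings} directly to $\Phi$ and then to each equation in the critical point system $\Psi$, afterwards invoking Lemma~\ref{Lem:quasihomogeneous}$(iv)$ to recognize $\ini_\eta\Psi$ as the right collection of initial forms. The proof should be short: it is essentially a repackaging of the two lemmas, once one checks that every equation in $\Psi$ has support contained in $\calA$.

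For the first two claims, about the Bloch variety, I would apply Lemma~\ref{L:polysInSubrings} directly to the dispersion polynomial $\Phi$, whose support is $\calA$ and Newton polytope is $A$. Part $(i)$ of that lemma immediately gives $t^{-b}\Phi\in\CC[M_\calF]$ and identifies $\Var(t^{-b}\Phi)$ inside $V_\calF$ with $\overline{\BV}\cap V_\calF$; part $(ii)$ gives $t^{-b}\ini_\eta\Phi=p(t^{-b}\Phi)\in\CC[\ZZ\calF]$ and identifies its zero locus with $\overline{\BV}\cap\calO_\calF$, which is by definition the initial Bloch variety $\ini_\eta\BV$.

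For the statements about critical points, the first step is to observe that every $f\in\Psi$ has support contained in $\calA$: this is clear for $\Phi$, and for $z_i\partial\Phi/\partial z_i$ it follows because the derivation $z_i\partial/\partial z_i$ sends $z^\alpha\lambda^j$ to $\alpha_i z^\alpha\lambda^j$, so it preserves (or kills) each exponent vector. Hence Lemma~\ref{L:polysInSubrings}$(i)$ applies to every $f\in\Psi$, producing $t^{-b}f\in\CC[M_\calF]$ with $\Var(t^{-b}f)=\overline{\Var(f)}\cap V_\calF$ in $V_\calF$; intersecting over $f\in\Psi$ shows that $t^{-b}\Psi$ defines the critical points inside $V_\calF$. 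Finally, to identify the asymptotic critical points along $\calO_\calF$, I would apply Lemma~\ref{L:polysInSubrings}$(ii)$ to each $f\in\Psi$ and then invoke Lemma~\ref{Lem:quasihomogeneous}$(iv)$, which asserts that $\ini_\eta$ commutes with the operators $z_i\partial/\partial z_i$; this gives $\ini_\eta\Psi=\{\ini_\eta f\mid f\in\Psi\}$, and intersecting the resulting hypersurfaces on $\calO_\calF$ identifies the vanishing locus of $t^{-b}\ini_\eta\Psi$ there with the asymptotic critical points. I anticipate no real obstacle: the only points to verify beyond the cited lemmas are that the choice of $b\in\calF$ is immaterial, which is already built into Lemma~\ref{L:polysInSubrings}, and that intersecting the principal hypersurfaces cut out by the $t^{-b}f$ in the affine open $V_\calF$ commutes with passage to the quotient by $\langle M_\calF\smallsetminus\ZZ\calF\rangle$, which is just the restriction of ideals along the surjection $p$ of~\eqref{Eq:MonoidAlgebraMap}.
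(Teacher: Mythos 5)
Your proposal is correct and matches the paper's approach: the paper treats Corollary~\ref{C:Facial_Systems} as an immediate consequence of Lemma~\ref{L:polysInSubrings} (no proof is given), and you correctly identify that lemma as the key ingredient, apply part $(i)$ to $\Phi$ and to each member of $\Psi$, and apply part $(ii)$ to pass to the orbit $\calO_\calF$.

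One small inaccuracy worth flagging: your invocation of Lemma~\ref{Lem:quasihomogeneous}$(iv)$ is misattributed. You write that it ``gives $\ini_\eta\Psi=\{\ini_\eta f\mid f\in\Psi\}$,'' but that equality is the \emph{definition} of the initial subsystem $\ini_\eta\Psi$, stated in the paper immediately after Lemma~\ref{Lem:quasihomogeneous}. What $(iv)$ of that lemma actually says is that the operators $z_i\partial/\partial z_i$ commute with $\ini_\eta$, so $\ini_\eta\Psi$ also coincides with the critical point equations of the initial form $\ini_\eta\Phi$ --- a useful fact elsewhere, but not a statement the corollary asserts. Nothing in your argument relies on it, so the gratuitous citation does not create a gap. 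You are also right to flag that the polynomials $z_i\partial\Phi/\partial z_i$ have support only \emph{contained in} $\calA$ rather than equal to $\calA$; the proof of Lemma~\ref{L:polysInSubrings} carries over verbatim to that situation (the key step picks a term $\alpha t^c$ with $c\in\calF$, and if no such term exists then $\ini_\eta f=0=p(t^{-b}f)$ and the conclusion is trivially true), though it would have been cleaner to say so explicitly rather than just observe the containment.
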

%%%%%%%%%%%%%%%%%%%%%%%%%%%%%%%%%%%%%%%%%%%%%%%%%%%%%%%%%%%%%%%%%%%%%%%%%%%%%%%%%%%%%%%%%%%%%%%%%%%%

Corollary~\ref{C:Facial_Systems} describes the equations for asymptotic critical points.
We turn our attention to the multiplicities of such points.

%%%%%%%%%%%%%%%%%%%%%%%%%%%%%%%%%%%%%%%%%%%%%%%%%%%%%%%%%%%%%%%%%%%%%%%%%%%%%%%%%%%%%%%%%%%%%%%%%%%%
\begin{Example}\label{Ex:TV_singularHouse}
  Consider affine toric varieties associated to faces from Example~\ref{Ex:singularHouse}.
  Let $E$ be the edge in Figure~\ref{F:singularHouse} and $\tau_\calE$ the cone~\eqref{Eq:tau_F}.
  Figure~\ref{F:doubleCone} 
  %%%%%%%%%%%%%%%%%%%%%%%%%%%%%%%%%%%%%%%%%%%%%%%%%%%%%%%%%%%%%%%%%%%%%%%%%%%%%%%%%%%%%%%%%%%%%%%%%%%%
  \begin{figure}[htb]
    \centering
    \begin{picture}(114,102)(-12,0)
      \put(0,0){\includegraphics{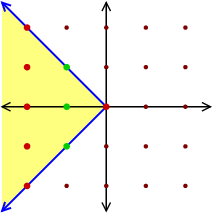}}
      \put(-12,70){$\tau_\calE$}  \put(54,54){$\bfzero$}
    \end{picture}
      \qquad
    \begin{picture}(140,102)
        \put(0,0){\includegraphics[height=102pt]{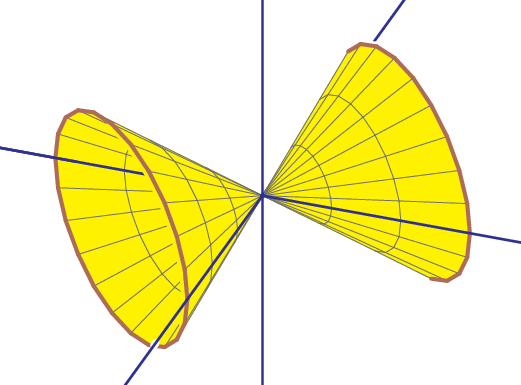}}
        \put(100,20){$Y_\calE$}
        \put(26,3){$a$} \put(1,65){$c$} \put(72,92){$b$}
    \end{picture}

    \caption{Cone $\tau_\calE$ and associated affine toric variety $Y_\calE$.}
    \label{F:doubleCone}
  \end{figure}
  %%%%%%%%%%%%%%%%%%%%%%%%%%%%%%%%%%%%%%%%%%%%%%%%%%%%%%%%%%%%%%%%%%%%%%%%%%%%%%%%%%%%%%%%%%%%%%%%%%%%
  shows $\tau_\calE$ and the corresponding affine toric variety $Y_\calE$,
  which is the closure of the image of $(\CC^\times)^2$ under the map 
  \[
     (\CC^\times)^2\ \ni\ (x,y)\ \longmapsto\ (x^{-1}y\,,\ x^{-1}\,,\ x^{-1}y^{-1})\ \in\ \CC^3\,.
  \]
  Writing $(a,b,c)$ for the coordinates of $\CC^3$, we have that
  $Y_\calE=\Var(ac-b^2)$, which is a quadratic cone in $\CC^3$.
  Figure~\ref{F:doubleCone} shows the real points of $Y_\calE$.
  This has an isolated singularity of multiplicity two at the origin.
  Indeed,
  \[
    \bigr(\CC[a,b,c]/\langle ac-b^2\rangle\bigl)/\langle a,c \rangle\ \simeq\
    \CC[b]/\langle b^2 \rangle\,,
  \]
  which has dimension 2 as a complex vector space.
  As $\ZZ\calF\simeq\ZZ$, we have that 
  $V_\calE\simeq \CC^\times\times Y_\calE$ is singular along $\CC^\times\times\{(0,0,0)\}$ of constant
  multiplicity two.

  For the facets $G$ and $H$, we have
  \[
    \ZZ^3\ =\ \ZZ(\calG)\oplus \ZZ\cdot(-1,0,0)\
           =\ \ZZ(\calH)\oplus \ZZ\cdot(0,0,-1)\,,
  \]
  and thus $V_\calG\simeq V_\calH\simeq(\CC^\times)^2\times\CC$.
  Both affine toric varieties are smooth. 
\end{Example}
%%%%%%%%%%%%%%%%%%%%%%%%%%%%%%%%%%%%%%%%%%%%%%%%%%%%%%%%%%%%%%%%%%%%%%%%%%%%%%%%%%%%%%%%%%%%%%%%%%%%

Let $F$ be a face of $A$.
Let $\Sat(\calF)$ be the saturation of $\ZZ\calF$ in $\ZZ\calA$.
The composition of the inclusion $M_\calF\subset\ZZ\calA$ with the projection
$\ZZ\calA\twoheadrightarrow\ZZ\calA/\Sat(\calF)$
induces a map of monoids,
\[
   M_\calF\ \xrightarrow{\ \pi_\calF\ }\ \ZZ\calA/\Sat(\calF)\ \simeq\ \ZZ^{d+1-\dim(F)}
   \ =\ \ZZ^{\codim(F)}\,.
\]
Write $\defcolor{\tau_\calF}\subset\ZZ^{\codim(F)}$ for the image monoid, 
$\pi_\calF(M_\calF)$.
Let $\defcolor{\conv(\tau_\calF)}\subset\RR^{\codim(F)}$ be the convex cone it generates and
$\defcolor{\conv(\tau^\circ_\calF)}\subset\conv(\tau_\calF)$ be the convex hull of $\tau_\calF\smallsetminus\{\bfzero\}$.
Finally, define $\defcolor{Q_\calF}\vcentcolon=\conv(\tau_\calF)\smallsetminus \conv(\tau^\circ_\calF)$ to be their difference,
which is a non-convex lattice polytope.
Define $\defcolor{\mu_\calF}\vcentcolon= \nvol(Q_\calF)$.

Consider these for $\tau_\calF=\NN\{(1,0),(2,1),(3,2),(2,-1), (3,-2)\}$ and the cone $\tau_\calE$ of
Figure~\ref{F:doubleCone},
\[
   \raisebox{-50.7pt}{\begin{picture}(114,102)(-12,0)
      \put(0,0){\includegraphics{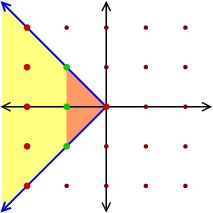}}
      \put(-12,70){$\tau_\calE$}  \put(54,54){$\bfzero$}
      \put(34,79){$Q_\calE$} \put(39,75){\vector(0,-1){20}}
    \end{picture}
    \qquad\quad
    \begin{picture}(120,102)
      \put(0,0){\includegraphics{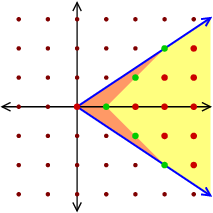}}
      \put(104,70){$\tau_\calF$}  \put(28,54){$\bfzero$}
      \put(53,85){$Q_\calF$} \put(57,81){\vector(0,-1){22}}
    \end{picture}}\ .
\]
Thus, $\mu_\calF=\nvol(Q_\calF)=4$, while $\mu_\calE=\nvol(Q_\calE)=2$, the multiplicity of the origin in $Y_\calE$.
This quantity $\mu_\calF$ is called a subdiagram volume and written $u(S/\Gamma)$ in~\cite[\S 5.3]{GKZ},
where $S=M_\calF$ and $\Gamma=\Sat(\calF)$.
%
%  The first is checked in the file QF.sing
%

%%%%%%%%%%%%%%%%%%%%%%%%%%%%%%%%%%%%%%%%%%%%%%%%%%%%%%%%%%%%%%%%%%%%%%%%%%%%%%%%%%%%%%%%%%%%%%%%%%%%
Suppose that a linear section $X_\calA\cap L$ of the projective toric variety $X_\calA$
contains a point $x$ that lies in an orbit $\calO_\calF$ in $\partial X_\calA$.
We determine the intersection multiplicity $m(x, X_\calA, L)$ of the linear section at $x$
when the intersection $X_\calF \cap (\PP^\calF\cap L)$ in $\PP^\calF$ is also proper.
Define $\defcolor{L_\calF}\vcentcolon= \PP^\calF\cap L$.
As  $X_\calF \cap L_\calF$ is proper we have $\dim(L_\calF)+\dim(X_\calF)=\dim(\PP^\calF)$.

%%%%%%%%%%%%%%%%%%%%%%%%%%%%%%%%%%%%%%%%%%%%%%%%%%%%%%%%%%%%%%%%%%%%%%%%%%%%%%%%%%%%%%%%%%%%%%%%%%%%
\begin{Theorem}\label{Th:multiplicity_at_infinity}
  Let $L$ be a linear subspace of\/ $\PP^\calA$ with $X_\calA\cap L$ and $X_\calF\cap L_\calF$ proper.
  Then 
  \begin{enumerate}[$(i)$]
  \item ${\displaystyle \deg(X_\calF)\ =\ \sum_{x\in X_\calF\cap L} m(x; X_\calF, L_\calF)\,,}$ and 

  \item for $x\in \calO_\calF\cap L$, 
    \[
       m(x; X_\calA, L)\ \geq\  \mu_\calF \cdot [ \Sat(\calF) : \ZZ\calF] \cdot m(x; X_\calF, L_\calF)\,,
    \]
    with equality if $L$ is general given that $\PP^\calF\cap L=L_\calF$.
  \end{enumerate}
\end{Theorem}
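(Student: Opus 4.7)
The plan is to derive part $(i)$ as an immediate application of Proposition~\ref{P:Degree_Section} to $X_\calF\subset\PP^\calF$ with the linear section $L_\calF = \PP^\calF\cap L$. The hypothesis that $X_\calF\cap L_\calF$ is proper is exactly what the degree-equals-sum-of-multiplicities formula requires, so this part needs only a sentence.

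The substantive content is part $(ii)$. I would work in the affine chart $V_\calF$ around $x\in\calO_\calF$, whose coordinate ring is $\CC[M_\calF]$ by Proposition~\ref{P:coordinateRings}. By Corollary~\ref{C:Facial_Systems}, the defining linear forms of $L$, after multiplying by $t^{-b}$ for any $b\in\calF$, define $X_\calA\cap L$ in $V_\calF$, while their images under the quotient $p\colon\CC[M_\calF]\twoheadrightarrow\CC[\ZZ\calF]$ define $X_\calF\cap L_\calF$ on $\calO_\calF$. The key structural input is a local factorization: combining the retraction $\iota^*\colon V_\calF\to\calO_\calF$ of Lemma~\ref{L:CoordinateRings} with the toric map $V_\calF\to Y_\calF$ induced by $\pi_\calF\colon M_\calF\to\tau_\calF$ (where $Y_\calF$ has coordinate ring $\CC[\tau_\calF]$), one obtains a finite map $V_\calF\to\calO_\calF\times Y_\calF$ of degree $[\Sat(\calF):\ZZ\calF]$ onto its image. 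I would then split the defining equations of $L$ into a ``tangential'' group that, modulo $\langle I\rangle$, cuts out $L_\calF$ on $\calO_\calF$ and contributes multiplicity $m(x;X_\calF,L_\calF)$, and a ``transverse'' group that restricts to a linear section of $Y_\calF$ at its distinguished singular point (the vertex of the cone generated by $\tau_\calF$).

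To pin down the transverse contribution, I would invoke the standard toric multiplicity formula: the local multiplicity at the origin of $Y_\calF$ of a general hyperplane section equals $\nvol(Q_\calF)=\mu_\calF$ (see~\cite[Ch.~5, \S 3]{GKZ}, where this is the subdiagram volume). The covering degree $[\Sat(\calF):\ZZ\calF]$ then scales this contribution, producing the product $\mu_\calF\cdot[\Sat(\calF):\ZZ\calF]\cdot m(x;X_\calF,L_\calF)$. Associativity of Hilbert--Samuel multiplicities along the filtration by powers of $\langle I\rangle$ gives the inequality in general, and a Bertini argument for the transverse slice gives equality when $L$ is general among linear subspaces with $\PP^\calF\cap L = L_\calF$.

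The main obstacle will be making the local product structure precise. When $M_\calF = \ZZ\calF\oplus\tau_\calF$ and $\Sat(\calF)=\ZZ\calF$ the factorization $V_\calF\cong\calO_\calF\times Y_\calF$ is literal and the multiplicity factors on the nose; in general one has the factorization only up to a finite étale cover of degree $[\Sat(\calF):\ZZ\calF]$, and the index appears precisely because this cover is nontrivial on the transverse fiber while being trivial along $\calO_\calF$. Carefully tracking this index through the local ring $\CC[M_\calF]_{\mathfrak m_x}$, as opposed to assuming a splitting, is the technical heart of the argument. The dynamic definition of multiplicity (Remark~\ref{Rem:intMult}) may be easier to use than the algebraic one here, since one can deform $L$ within $\PP^\calA$ while fixing $L_\calF$ inside $\PP^\calF$, count the points of $X_\calA\cap L(s)$ that collide at $x$, and verify the asserted count by bounding the transverse moving points via the toric covering.
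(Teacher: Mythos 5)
Your part $(i)$ and the broad strategy for $(ii)$ match the paper: both of you reduce the problem to the multiplicity of $X_\calA$ along $\calO_\calF$, invoke the GKZ formula to identify that multiplicity as $\mu_\calF\cdot[\Sat(\calF):\ZZ\calF]$, and then handle $m(x;X_\calF,L_\calF)>1$ by the dynamic interpretation from Remark~\ref{Rem:intMult}. The paper's proof does exactly this in three sentences: cite \cite[Ch.~5, Thm.~3.16]{GKZ} for the orbit multiplicity, observe that this gives the claim when $m(x;X_\calF,L_\calF)=1$, and then perturb $L_\calF$.

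Where you diverge is in trying to rederive the GKZ multiplicity from an explicit local product structure, and this part has a gap. The map $V_\calF\to Y_\calF$ you attribute to $\pi_\calF\colon M_\calF\twoheadrightarrow\tau_\calF$ goes the wrong way: a surjection of affine monoids induces a surjective ring map $\CC[M_\calF]\twoheadrightarrow\CC[\tau_\calF]$, hence a closed immersion $Y_\calF\hookrightarrow V_\calF$, not a projection $V_\calF\to Y_\calF$. To get a projection you would need a monoid section $\tau_\calF\to M_\calF$, which exists precisely when $M_\calF\cong\ZZ\calF\oplus\tau_\calF$; in general no such splitting exists, and the claimed finite map $V_\calF\to\calO_\calF\times Y_\calF$ is not defined. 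Your description of the index $[\Sat(\calF):\ZZ\calF]$ as a \emph{covering degree scaling the transverse multiplicity} is likewise not a literal statement in the non-split, non-saturated case; rather, the product $\mu_\calF\cdot[\Sat(\calF):\ZZ\calF]$ is what GKZ's Theorem 3.16 gives directly for the multiplicity of $X_\calA$ along $\calO_\calF$, and one should use that theorem as a black box rather than try to factor $V_\calF$. Once you accept the GKZ formula, your final paragraph — deform $L$ fixing $L_\calF$, split the degenerate intersection of $X_\calF\cap L_\calF$ at $x$ into simple points, and count the colliding points of $X_\calA\cap L(s)$ — is precisely the paper's argument, and it is correct. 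So: drop the explicit factorization attempt, and what remains is the paper's proof.
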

%%%%%%%%%%%%%%%%%%%%%%%%%%%%%%%%%%%%%%%%%%%%%%%%%%%%%%%%%%%%%%%%%%%%%%%%%%%%%%%%%%%%%%%%%%%%%%%%%%%%

%%%%%%%%%%%%%%%%%%%%%%%%%%%%%%%%%%%%%%%%%%%%%%%%%%%%%%%%%%%%%%%%%%%%%%%%%%%%%%%%%%%%%%%%%%%%%%%%%%%%
\begin{proof}
  Statement $(i)$ is Proposition~\ref{P:Degree_Section} for $X_\calF\cap L_\calF$.
  For $(ii)$,  $\mu_\calF \cdot [ \Sat(\calF) : \ZZ\calF]$ is the multiplicity of the
  toric variety $X_\calA$ along points of the orbit $\calO_\calF$~\cite[Ch.~5, Thm.\ 3.16]{GKZ}.
  This implies the result when  $m(x; X_\calF, L_\calF)=1$.
  The result follows by  perturbing $L_\calF$ and the dynamic interpretation of
  intersection multiplicities from Remark~\ref{Rem:intMult}.
\end{proof}
%%%%%%%%%%%%%%%%%%%%%%%%%%%%%%%%%%%%%%%%%%%%%%%%%%%%%%%%%%%%%%%%%%%%%%%%%%%%%%%%%%%%%%%%%%%%%%%%%%%%

%%%%%%%%%%%%%%%%%%%%%%%%%%%%%%%%%%%%%%%%%%%%%%%%%%%%%%%%%%%%%%%%%%%%%%%%%%%%%%%%%%%%%%%%%%%%%%%%%%%%
%
%
\section{Vertical faces}\label{S:vertical}

When the Newton polytope $A$ of a periodic graph has a vertical face $F$, there will be $\nvol(F)$ asymptotic critical
points on $X_\calF$.
If $F$ has a vertical face $E$, then at least $\nvol(E)$ of these will lie on $X_\calE$.
As observed in Example~\ref{Ex:TV_singularHouse}, $X_\calA$ may be singular along $X_\calE$, leading to additional
contributions.
Complicating this, $E$ may lie on more than one vertical face.
Before we do the accounting, consider the following example.

%%%%%%%%%%%%%%%%%%%%%%%%%%%%%%%%%%%%%%%%%%%%%%%%%%%%%%%%%%%%%%%%%%%%%%%%%%%%%%%%%%%%%%%%%%%%%%%%%%%%
\begin{Example}\label{Ex:singularHouseCP}
 Recall the graph $\Gamma$ and polytope of Example~\ref{Ex:meet_singularHouse} with dispersion
 polynomial~\eqref{Eq:singHouseDispersonPolynomial}.
 The polytope has normalized volume $16$, and a symbolic computation with parameters shows that the 
 Bloch variety has eight critical points, which is the lower bound of Corollary~\ref{C:Corner_CPdeg}.
 These occur at the corner points $(\pm 1,\pm 1)$, for general values of the parameters.
 The corresponding critical values $\lambda$ are
\[
   \pm b\pm c +\frac{V(u)+V(v)}{2}\ \pm\
   \sqrt{ (a+d)^2\; +\; \bigl(\pm b \pm c + \tfrac{V(u)-V(v)}{2}\bigr)^2 }\ ,
\]
where the  $\pm$  first coordinate of the corner point
is the sign of $b$, and  the second  is the sign of $c$.
%
%  Frank verified this on May 8 2025
%
Thus, $\cpdeg(\Gamma)=8$ and the vertical faces contribute eight to the count of 16 critical points.

We verify that.
The Newton polytope has four vertical facets and four vertical edges.
The vertical facet $G$ exposed by $(-1,-1,0)$ has initial form 
\[
\lambda( bx + cy)\ + adx - V(v)(bx + cy)\ =\
 x\bigl( \lambda ( b+ cyx^{-1})\ + ad -V(v)(b +cyx^{-1})\bigr)\,,
\]
which has no critical points on the orbit $\calO_\calG\simeq(\CC^\times)^2$ corresponding to this facet.
(The coordinates along $\calO_\calG$ are $\lambda$ and $yx^{-1}$.)
Thus, the critical points on $X_\calG$ occur along its boundary.
The same holds for the other vertical facets.

There are four vertical edges, each of length 1.
Removing units, their facial forms are
\[
\lambda b + ad - bV(v) \qquad\mbox{or}\qquad  \lambda-V(v)\,.
\]
Each has one solution on the orbit corresponding to its edge ($\CC^\times$ with coordinate $\lambda$).

As each vertical facet $G$ has two vertical edges, the toric subvariety $X_\calG$ corresponding to $G$ 
has two solutions to the critical point equations, one for each edge.
This gives four solutions to the critical point equations along vertical faces, one-half of the expected
eight.

To account for this discrepancy, recall that the toric variety $X_\calA$ is singular of multiplicity $\mu_\calE=2$ along
each orbit $\calO_\calE$ corresponding to a vertical edge $E$.
Thus, the critical point equations have $8=4\cdot 2$ solutions counted with multiplicity on $\partial X_\calA$.
\end{Example}
%%%%%%%%%%%%%%%%%%%%%%%%%%%%%%%%%%%%%%%%%%%%%%%%%%%%%%%%%%%%%%%%%%%%%%%%%%%%%%%%%%%%%%%%%%%%%%%%%%%%

Let $\defcolor{L}\subset\PP^\calA$ be the linear space corresponding to the critical point equations~\eqref{Eq:CPE} and
assume that $X_\calA\cap L$ is proper so that it consists of $\deg(X_\calA)$ points.

%%%%%%%%%%%%%%%%%%%%%%%%%%%%%%%%%%%%%%%%%%%%%%%%%%%%%%%%%%%%%%%%%%%%%%%%%%%%%%%%%%%%%%%%%%%%%%%%%%%%
\begin{Lemma}\label{Lem:Vertical}
  For each vertical face $F$ of $A=\conv(\calA)$, the hypotheses of Theorem~\ref{Th:multiplicity_at_infinity}
  hold and $X_\calF\cap L$ consists of $\deg(X_\calF)$ points.
\end{Lemma}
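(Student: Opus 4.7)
The plan is to verify the one hypothesis of Theorem~\ref{Th:multiplicity_at_infinity} that is not already given—namely properness of $X_\calF\cap L_\calF$—and then invoke Proposition~\ref{P:Degree_Section} to count its points.

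First I would identify the defining equations of $L_\calF$. Since $L$ is cut out in $\PP^\calA$ by the $d{+}1$ hyperplanes $\Lambda_f$ for $f\in\Psi$, the slice $L_\calF=\PP^\calF\cap L$ is cut out in $\PP^\calF$ by the restrictions of those hyperplanes. Restricting $\Lambda_f$ amounts to dropping the coordinates $x_a$ with $a\notin\calF$ from the defining linear form, which is precisely the initial form $\ini_\eta f$, where $\eta$ is a horizontal vector exposing $F$. By Corollary~\ref{Cor:VerticalSubSystems}, these $d{+}1$ initial forms satisfy $\codim(F)$ independent linear relations, so their span has dimension at most $d{+}1-\codim(F)=\dim(F)=\dim(X_\calF)$. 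Hence $\codim_{\PP^\calF}(L_\calF)\leq\dim(X_\calF)$.

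Next I would apply projective intersection theory. Because $X_\calF$ is an irreducible projective variety in $\PP^\calF$ and $\dim(X_\calF)+\dim(L_\calF)\geq\dim(\PP^\calF)$, the intersection $X_\calF\cap L_\calF$ is non-empty and every irreducible component has dimension at least $\dim(X_\calF)+\dim(L_\calF)-\dim(\PP^\calF)=\dim(X_\calF)-\codim_{\PP^\calF}(L_\calF)$. On the other hand $X_\calF\cap L_\calF=X_\calF\cap L\subseteq X_\calA\cap L$, which is zero-dimensional by the standing hypothesis on $L$, so every component of $X_\calF\cap L_\calF$ has dimension zero. This forces $\codim_{\PP^\calF}(L_\calF)=\dim(X_\calF)$, making $X_\calF\cap L_\calF$ a proper linear section. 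Proposition~\ref{P:Degree_Section} applied to the irreducible projective variety $X_\calF$ then yields $\deg(X_\calF)=\sum_{x\in X_\calF\cap L_\calF}m(x;X_\calF,L_\calF)$, as claimed.

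The subtle point is to avoid circularity: Proposition~\ref{P:SolutionsAtInfinity}$(iii)$ will itself be deduced from this lemma, so non-emptiness of $X_\calF\cap L_\calF$ cannot be borrowed from it. Instead, the projective completeness of $X_\calF$ supplies non-emptiness for free once the codimension bound $\codim_{\PP^\calF}(L_\calF)\leq\dim(X_\calF)$ is in hand, and the zero-dimensionality carried down from $X_\calA\cap L$ then sharpens this bound to the equality needed for properness.
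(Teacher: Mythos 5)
Your proposal is correct and follows essentially the same route as the paper: use Corollary~\ref{Cor:VerticalSubSystems} to bound $\codim_{\PP^\calF}(L_\calF)\leq\dim(X_\calF)$, invoke projective completeness of $X_\calF$ for non-emptiness, then use the given zero-dimensionality of $X_\calA\cap L$ to force $X_\calF\cap L_\calF$ to be a proper linear section, and finish with Proposition~\ref{P:Degree_Section}. The paper's proof is slightly more terse but uses exactly the same ingredients in the same order.
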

%%%%%%%%%%%%%%%%%%%%%%%%%%%%%%%%%%%%%%%%%%%%%%%%%%%%%%%%%%%%%%%%%%%%%%%%%%%%%%%%%%%%%%%%%%%%%%%%%%%%
\begin{proof}
  Let $\defcolor{L_\calF}\vcentcolon=\PP^\calF\cap L$, which is defined by the linear forms corresponding to members of
  the critical point equations and $\{x_a=0\mid a\in\calA\smallsetminus \calF\}$.
  By Corollary~\ref{Cor:VerticalSubSystems}, those linear forms satisfy $\codim(F)$-many linear equations.
  Consequently, $L_\calF$ has codimension in $\PP^\calF$ at most $d{+}1{-}\codim(F)=\dim(F)$.
  As this is the dimension of $X_\calF$, $\emptyset\neq X_\calF\cap L_\calF$.
  Since
  \[
    X_\calF\cap L_\calF\ \subset\ X_\calF\cap L\ \subset\ X_\calA\cap L\,,
  \] 
  $X_\calF\cap L_\calF$ has dimension zero and is therefore a proper intersection.
\end{proof}
%%%%%%%%%%%%%%%%%%%%%%%%%%%%%%%%%%%%%%%%%%%%%%%%%%%%%%%%%%%%%%%%%%%%%%%%%%%%%%%%%%%%%%%%%%%%%%%%%%%%

%%%%%%%%%%%%%%%%%%%%%%%%%%%%%%%%%%%%%%%%%%%%%%%%%%%%%%%%%%%%%%%%%%%%%%%%%%%%%%%%%%%%%%%%%%%%%%%%%%%%
\begin{Definition}\label{Def:Nvert}
  If the only vertical faces of $A$ are facets (see, e.g.~Figure~\ref{F:Oblique}), then we set
  \[
   \defcolor{\Nvert}\ \vcentcolon=\
   \sum_{F\mbox{\scriptsize\ vertical}} \ 
   \mu_\calF \cdot [ \Sat(\calF) : \ZZ\calF] \cdot \deg(X_\calF)\ ,
 \]
 which is combinatorial.
 For such a face $F$, $\calO_\calF\subsetneq X_\calF$, and this is the only orbit corresponding to a vertical face in $X_\calF$.
 Then Lemma~\ref{Lem:Vertical} implies that 
  \begin{equation}\label{Eq:Vert_Ineq}
   \Nvert\ \geq\
   \sum_{F\mbox{\scriptsize\ vertical}} \ \sum_{x\in\calO_\calF\cap L}\quad
   \mu_\calF \cdot [ \Sat(\calF) : \ZZ\calF] \cdot m(x; X_\calF, L_\calF)\ .
 \end{equation}

 If there exist vertical faces of $A$ of codimension more than 1, then we set
 \[
   \defcolor{\Nvert}\ \vcentcolon=\
   \sum_{F\mbox{\scriptsize\ vertical}} \ \sum_{x\in\calO_\calF\cap L}\quad
   \mu_\calF \cdot [ \Sat(\calF) : \ZZ\calF] \cdot m(x; X_\calF, L_\calF)\ .
 \]
 Notice that inequality~\ref{Eq:Vert_Ineq} holds in either case.
\end{Definition}
%%%%%%%%%%%%%%%%%%%%%%%%%%%%%%%%%%%%%%%%%%%%%%%%%%%%%%%%%%%%%%%%%%%%%%%%%%%%%%%%%%%%%%%%%%%%%%%%%%%%

Let \defcolor{$\partial_{{\rm vert}}X_\calA$}  be the union of all toric subvarieties
 $X_\calF$ for $F$ a proper vertical face of $A$.

%%%%%%%%%%%%%%%%%%%%%%%%%%%%%%%%%%%%%%%%%%%%%%%%%%%%%%%%%%%%%%%%%%%%%%%%%%%%%%%%%%%%%%%%%%%%%%%%%%%%
\begin{Corollary}\label{C:verticalContribution}
  We have the following inequality, 
  \[
    \sum_{x\in \partial_{{\rm vert}}X_\calA}   m(x; X_\calA, L)\ \geq\ \Nvert\,.  
  \]
\end{Corollary}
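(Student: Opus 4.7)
The plan is to bound the left-hand side orbit-by-orbit using Theorem~\ref{Th:multiplicity_at_infinity}(ii), and then compare with the two possible definitions of $\Nvert$. First, by Definition-Proposition~\ref{DP:toricSets}(iii), the toric variety $X_\calA$ is the disjoint union of its $(\CC^\times)^{d+1}$-orbits, so
\[
  \partial_{\rm vert} X_\calA \cap L \ =\ \bigsqcup_{F'} \bigl( \calO_{\calF'} \cap L \bigr),
\]
where $F'$ ranges over the faces of $A$ contained in some vertical face. In particular, the orbits $\calO_\calF$ attached to the vertical faces themselves are pairwise disjoint and sit inside $\partial_{\rm vert} X_\calA$. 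Lemma~\ref{Lem:Vertical} supplies the properness of $X_\calF\cap L_\calF$ needed to invoke Theorem~\ref{Th:multiplicity_at_infinity} for each vertical $F$.

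Applying Theorem~\ref{Th:multiplicity_at_infinity}(ii) pointwise at every $x\in\calO_\calF\cap L$ and discarding the nonnegative contributions from the remaining (non-vertical) orbits in $\partial_{\rm vert} X_\calA\cap L$ yields
\[
  \sum_{x \in \partial_{\rm vert} X_\calA \cap L} m(x; X_\calA, L)\ \geq\
  \sum_{F \text{ vertical}} \ \sum_{x \in \calO_\calF \cap L}\
    \mu_\calF \cdot [\Sat(\calF):\ZZ\calF] \cdot m(x; X_\calF, L_\calF).
\]
When some vertical face of $A$ has codimension $>1$, the right-hand side is $\Nvert$ by the second case of Definition~\ref{Def:Nvert}, and the corollary follows immediately in that case.

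The first case of Definition~\ref{Def:Nvert}, where every vertical face is a facet, is more subtle: the combinatorial quantity $\Nvert = \sum_F \mu_\calF [\Sat(\calF):\ZZ\calF]\,\deg(X_\calF)$ can strictly exceed the orbit sum above, because points of $X_\calF\cap L$ lying on proper subfaces of $F$ contribute to $\deg(X_\calF)$ but not to $\sum_{x\in\calO_\calF\cap L} m(x;X_\calF,L_\calF)$. My plan is to strengthen the pointwise bound, showing that $m(x;X_\calA,L)\geq \mu_\calF[\Sat(\calF):\ZZ\calF]\,m(x;X_\calF,L_\calF)$ for \emph{every} $x\in X_\calF\cap L$, not only those in the open orbit. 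The input is that $\mu_\calF[\Sat(\calF):\ZZ\calF]$ is the multiplicity of $X_\calA$ along the entire subvariety $X_\calF$: it is the generic multiplicity at $\calO_\calF$, and multiplicities only increase under specialization, so it lower-bounds the multiplicity at any $x\in X_\calF$ and hence, via the dynamic interpretation in Remark~\ref{Rem:intMult}, lower-bounds the intersection multiplicity with $L$. Summing with Theorem~\ref{Th:multiplicity_at_infinity}(i) gives $\mu_\calF[\cdot]\deg(X_\calF)$ per vertical facet; the observation recorded in Definition~\ref{Def:Nvert} that $\calO_\calF$ is the only vertical orbit inside $X_\calF$ in this case lets me add these per-facet contributions into $\Nvert$ while correctly attributing the shared non-vertical boundary orbits.

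The main obstacle is precisely this last case: extending the multiplicity bound from the open orbit $\calO_\calF$ to all of $X_\calF$ (a local algebraic fact about how $X_\calA$ degenerates along $X_\calF$), and then combining the resulting per-facet estimates so that boundary orbits shared among different vertical facets are not double counted. The remaining ingredients---the orbit decomposition, Lemma~\ref{Lem:Vertical}, and Theorem~\ref{Th:multiplicity_at_infinity}---are already in place from the preceding subsection.
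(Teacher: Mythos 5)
Your first two steps---decomposing $\partial_{\rm vert}X_\calA\cap L$ by orbit and then applying Theorem~\ref{Th:multiplicity_at_infinity}$(ii)$ pointwise over the vertical orbits---are exactly what the paper's proof does, and this already closes the argument in the second case of Definition~\ref{Def:Nvert}, where $\Nvert$ is \emph{defined} to be that orbit sum. So the bulk of your proposal matches the paper's route.

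Where you go further is in scrutinizing the first case (only vertical faces are facets), and you have correctly put your finger on a genuine subtlety that the paper's proof treats opaquely. In that case $\Nvert=\sum_F\mu_\calF[\Sat(\calF):\ZZ\calF]\deg(X_\calF)$, and inequality~\eqref{Eq:Vert_Ineq} in Definition~\ref{Def:Nvert} says this is $\geq$ the orbit sum---which is the \emph{wrong} direction to conclude $\sum m(x;X_\calA,L)\geq\Nvert$ from the orbit-by-orbit bound alone. The paper's one-line proof (``follows from Theorem~\ref{Th:multiplicity_at_infinity}$(ii)$ and Definition~\ref{Def:Nvert}'') does not acknowledge this; it implicitly requires either that $X_\calF\cap L\subset\calO_\calF$ for every vertical facet $F$ (so that~\eqref{Eq:Vert_Ineq} is an equality), or the stronger pointwise bound $m(x;X_\calA,L)\geq\mu_\calF[\Sat(\calF):\ZZ\calF]\,m(x;X_\calF,L_\calF)$ for all $x\in X_\calF\cap L$, not just those on the open orbit. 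Your proposal to establish the latter via upper-semicontinuity of multiplicity and the dynamic interpretation of intersection numbers is a reasonable strengthening of Theorem~\ref{Th:multiplicity_at_infinity}$(ii)$, but as you note it is not immediate, since~\cite[Ch.~5, Thm.~3.16]{GKZ} (the cited source) only gives the multiplicity along the open orbit.

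The double-counting concern you raise at the end is likewise real and unresolved in your sketch: when two vertical facets $F_1,F_2$ share a non-vertical lower-dimensional face $F'$, a point $x\in\calO_{\calF'}\cap L$ contributes once to the left-hand side but would appear in both per-facet sums $\mu_{\calF_i}[\cdots]m(x;X_{\calF_i},L_{\calF_i})$, and nothing in Theorem~\ref{Th:multiplicity_at_infinity}$(ii)$ guarantees $m(x;X_\calA,L)$ dominates their \emph{sum}. So your proposal, as written, still has an open step in the first case. To be fair, the paper's own proof does not visibly resolve this either---it elides the case distinction entirely---so you have identified a point where the published argument is at minimum under-explained. A clean way to avoid both issues would be to argue that, for a general operator and a vertical facet $F$ with no vertical proper face, the linear section $X_\calF\cap L$ is in fact supported on $\calO_\calF$ (using the genericity of the parameters and the codimension count of Corollary~\ref{Cor:VerticalSubSystems}); that would make~\eqref{Eq:Vert_Ineq} an equality and render the extension-to-the-boundary and double-counting questions moot.
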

%%%%%%%%%%%%%%%%%%%%%%%%%%%%%%%%%%%%%%%%%%%%%%%%%%%%%%%%%%%%%%%%%%%%%%%%%%%%%%%%%%%%%%%%%%%%%%%%%%%%
\begin{proof}
  Collecting the terms in the sum by the orbit they lie on gives
  \[
     \sum_{x\in \partial_{{\rm vert}}X_\calA}  m(x; X_\calA, L)  \ =\ \sum_{F\mbox{\scriptsize\ vertical}}\ 
    \sum_{x\in\calO_\calF\cap L}  m(x; X_\calA, L)\,.
  \]
  The lemma follows from the inequality of Theorem~\ref{Th:multiplicity_at_infinity}$(ii)$
  and definition~\eqref{Def:Nvert}.
\end{proof}
%%%%%%%%%%%%%%%%%%%%%%%%%%%%%%%%%%%%%%%%%%%%%%%%%%%%%%%%%%%%%%%%%%%%%%%%%%%%%%%%%%%%%%%%%%%%%%%%%%%%

%%%%%%%%%%%%%%%%%%%%%%%%%%%%%%%%%%%%%%%%%%%%%%%%%%%%%%%%%%%%%%%%%%%%%%%%%%%%%%%%%%%%%%%%%%%%%%%%%%%%
\begin{Remark}
  The definition of $\Nvert$ is not entirely combinatorial, as it is challenging to determine the sum in~\eqref{Eq:Vert_Ineq}.
  Nevertheless, this may be done in many cases, such as in Examples~\ref{Ex:singularHouseCP} and~\ref{Ex:singularity}.
\end{Remark}
%%%%%%%%%%%%%%%%%%%%%%%%%%%%%%%%%%%%%%%%%%%%%%%%%%%%%%%%%%%%%%%%%%%%%%%%%%%%%%%%%%%%%%%%%%%%%%%%%%%%

%%%%%%%%%%%%%%%%%%%%%%%%%%%%%%%%%%%%%%%%%%%%%%%%%%%%%%%%%%%%%%%%%%%%%%%%%%%%%%%%%%%%%%%%%%%%%%%%%%%%
%
%
\section{Initial graph disconnected}\label{S:Structural}

In Section~\ref{S:initGraph} we studied initial graphs and in Definition~\ref{Def:asymptoticallyDisconnected} we explained what
it means for a periodic graph $\Gamma$ to be asymptotically disconnected.
Lemma~\ref{Lem:asymptoticallydisconnected} noted the consequence of this for the initial form $\ini_\eta\Phi$ of the
dispersion polynomial $\Phi$.
Here, we quantify how this contributes to the enumeration of asymptotic critical points.
We begin with an example.

%%%%%%%%%%%%%%%%%%%%%%%%%%%%%%%%%%%%%%%%%%%%%%%%%%%%%%%%%%%%%%%%%%%%%%%%%%%%%%%%%%%%%%%%%%%%%%%%%%%%
\begin{Example}\label{Ex:hexPlus}
 
The periodic graph on the left in Figure~\ref{F:Oblique}
%%%%%%%%%%%%%%%%%%%%%%%%%%%%%%%%%%%%%%%%%%%%%%%%%%%%%%%%%%%%%%%%%%%%%%%%%%%%%%%%%%%%%%%%%%%%%%%%%%%%
\begin{figure}[htb]
  \centering
   \includegraphics{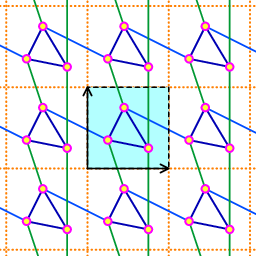}
    \qquad
   \begin{picture}(129,123)(-3,-3)
    \put(0,-2){\includegraphics{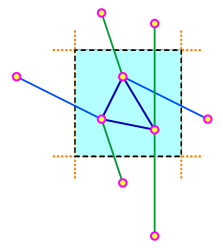}}
    \put(36,30){{\color{white}\circle*{9}}}
    \put(41,53){\small$u$}    \put(61.5, 81.5){\small$v$}   \put(77,51){\small$w$}
    \put(-2,84){\small$v{-}x$} \put(34,26){\small$v{-}y$}
    \put(25,109){\small$u{+}y$}   \put(105, 63){\small$u{+}x$}
    \put(78, 2){\small$w{-}y$}    \put( 78,106){\small$w{+}y$}
    \put(47, 71){\small$a$}
    \put(49,42){{\color{white}\circle*{9}}}    \put(49,94){{\color{white}\circle*{9}}}
    \put(25, 74){\small$b$}   \put(90, 68){\small$b$}
    \put(46.5, 92){\small$c$}   \put(47, 39){\small$c$}
    \put(60, 48){\small$d$}   \put(60, 65){\small$e$}
    \put(76, 84){\small$f$}   \put(76, 25){\small$f$}
   \end{picture}
   \quad
   \begin{picture}(142,123)
       \put(0,0){\includegraphics{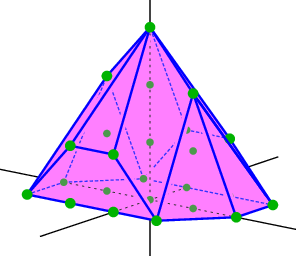}}
       \put(92,92){$A$}
    \put(25,4){$x$} \put(130,6){$y$} 
   \end{picture}
   \caption{A $\ZZ^2$-periodic graph, its edge labels, and Newton polytope.}
   \label{F:Oblique}
\end{figure}
%%%%%%%%%%%%%%%%%%%%%%%%%%%%%%%%%%%%%%%%%%%%%%%%%%%%%%%%%%%%%%%%%%%%%%%%%%%%%%%%%%%%%%%%%%%%%%%%%%%%
has six (orbits of) edges and three vertices in the fundamental domain.
Let $u,v,w$ be the values of the potential at the eponymous vertices, and $a,b,c,d,e,f$ be edge weights, as indicated
in the middle diagram in Figure~\ref{F:Oblique}.
We have also simplified notation, writing $x$ for $(1,0)$ and $y$ for $(0,1)$.

Here is its characteristic matrix (in the ordered basis corresponding to $u,v,w$),
\begin{equation}\label{Eq:hexPlusMatrix}
  M\ \vcentcolon=\ 
  \left(
  \begin{array}{ccc}
     \underline{\lambda} - u & \underline{a}+bx^{-1}+cy^{-1} & d \\
        a+\underline{bx}+cy  &  \underline{\lambda} - v     & e \\
           d     &       e          & \underline{\lambda}-w + \underline{fy}+ fy^{-1})   
  \end{array}
   \right)\;.
\end{equation}
%(Ignore the underlined entries for now.)
The Newton polytope has two vertical faces (the trapezoids in Figure~\ref{F:Oblique_Facets}) and
four faces whose initial graphs are disconnected.
%%%%%%%%%%%%%%%%%%%%%%%%%%%%%%%%%%%%%%%%%%%%%%%%%%%%%%%%%%%%%%%%%%%%%%%%%%%%%%%%%%%%%%%%%%%%%%%%%%%%
\begin{figure}[htb]
\centering
  \begin{picture}(130,115)(0,0)
    \put(4,0){\includegraphics{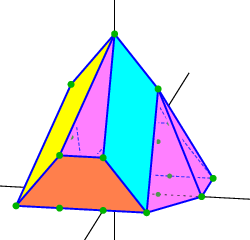}}
    \thicklines
     {\color{white}\put(10,55.7){\vector(1,0){27.8}}\put(10,56.3){\vector(1,0){27.8}}}
    \thinlines
    \put(0,53){$G$}\put(10,56){\vector(1,0){26.5}}
    \put(39,22){$V$}  
    \put(61,53){$F$}  \put(70,88){$A$}
    \put(36,1){$x$} \put(118,11){$y$} \put(61,106){$\lambda$}
  \end{picture}

  \caption{Facets $V$, $F$, and $G$ with asymptotic critical points.}
  \label{F:Oblique_Facets}
\end{figure}
%%%%%%%%%%%%%%%%%%%%%%%%%%%%%%%%%%%%%%%%%%%%%%%%%%%%%%%%%%%%%%%%%%%%%%%%%%%%%%%%%%%%%%%%%%%%%%%%%%%%
All have asymptotic critical points.
The face $F$ is exposed by the vector $\eta=(-2,-1,-1)$.
It consists of those points of the Newton polytope with minimum weight
$-3$ with respect to $\eta$.
The underlined entries in the characteristic matrix~\eqref{Eq:hexPlusMatrix} form the initial matrix.
The edge weights $d$ and $e$ are the initial forms of their entries, but do not contribute to the initial matrix,
which has block form,
 \begin{equation}\label{Eq:hexPlusInitial_F}
  \ini_\eta M\ =\ \ \left(
  \begin{array}{ccc}
    \lambda &     a   & 0\\
      bx    & \lambda & 0 \\
      0     &     0   & \lambda + fy
  \end{array}
  \right)\; ,
 \end{equation}
with no block a monomial.
This is apparent from the initial graph $\ini_\eta\hGamma$,
\[
  \begin{picture}(158,76)(0,-6)
   \put(0,0){\includegraphics{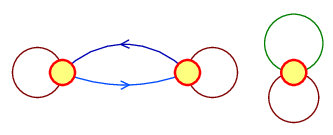}}
   \put( 27,  27){$u$}        \put(87, 27){$v$}   \put(137, 27){$w$}
   \put( 14,  44){$\lambda$}  \put( 47,  44){$a$}     
   \put(100,  44){$\lambda$}  \put( 47,  14){$bx$}
   \put(137,  -5){$\lambda$}  \put(135,62){$fy$}
  \end{picture}\;  \raisebox{35pt}{.}
\]

By Theorem~\ref{Th:InitialGraphForm},
 \begin{eqnarray*}
   \ini_\eta\Phi\ =\ \det \ini_\eta M& = &(\lambda^2-abx)(\lambda + fy)\ =\ \lambda^3+fy\lambda^2-abx\lambda-abfxy\\
   &=& \lambda^3(1 - abx\lambda^{-2})(1 + fy\lambda^{-1})\,.
 \end{eqnarray*}
The four monomials $\{\lambda^3, y\lambda^2, x\lambda, xy\}$ correspond to the vertices of the facet $F$ and form the set
$\calF$. 
The span $\ZZ\calF$ of their differences is
\[
\ZZ\calF\ =\ \ZZ(1,0,-2)\oplus\ZZ(0,1,-1)\,.
\]
Thus 
$\calO_\calF\simeq (\CC^\times)^2$ with coordinates $x\lambda^{-2}$ and $y\lambda^{-1}$.
Also, $X_\calF\simeq\PP^1\times\PP^1$, reflecting that $F$ is a parallelogram that is the Minkowski sum of two primitive
line segments. 
Thus, the initial Bloch variety $\ini_\eta\BV=\Var(\ini_\eta\Phi)$ is the union of two coordinate lines on $X_\calF$ that intersect, and has one
singular point, $p_F$. 
As $\ZZ^3=\ZZ\calF + \ZZ(0,0,-1)$, $\ZZ\calF$ equals its saturation and $X_\calA$ is smooth along $\calO_\calF$,
which implies that the multiplicities $\mu_\calF$ and  $[ \Sat(\calF) : \ZZ\calF]$ in Definition~\ref{D:N_Disc} are both 1.
\end{Example}
%%%%%%%%%%%%%%%%%%%%%%%%%%%%%%%%%%%%%%%%%%%%%%%%%%%%%%%%%%%%%%%%%%%%%%%%%%%%%%%%%%%%%%%%%%%%%%%%%%%%

Let us now consider the general case.
Suppose that $\Gamma$ is asymptotically disconnected in the direction of $\eta\in\ZZ^{d{+}1}$.
Let $F$ be the face of $\calN(\Phi)$ exposed by $\eta$ and set $\calF\vcentcolon=F\cap A$.
Let
\[
\ini_\eta\Phi\ =\  \mu g_1\cdot g_2\dotsb g_r
\]
be the factorization of the initial form given by Lemma~\ref{Lem:asymptoticallydisconnected}.
Then
\[
  \ini_\eta\BV\ =\ \Var(g_1)\cup \Var(g_2)\cup \dotsb \cup\Var(g_r)\,.
\]
As the factors $g_i$ arise from disjoint components of $\ini_\eta\hGamma$, the parameters $E,V$ in one factor $g_i$ are
distinct from the parameters in other factors.

Before we explain the consequence for asymptotic critical points, we need a definition.
Suppose that $P$ and $Q$ are lattice polytopes in $\ZZ^2$---polygons or degenerate line segments.
Their \demph{mixed area}, $\MA(P,Q)$, is
\[
   \defcolor{\MA(P,Q)}\ \vcentcolon=\
       \area(P+Q) - \area(P) - \area(Q)\,,
\]
where $\area$ is the usual area in $\RR^2$ in which the unit square has unit area.
Figure~\ref{F:MinkowskiSum}
%%%%%%%%%%%%%%%%%%%%%%%%%%%%%%%%%%%%%%%%%%%%%%%%%%%%%%%%%%%%%%%%%%%%%%%%%%%%%%%%%
 \begin{figure}[htb]
 \[
   P\ =\ \raisebox{-20pt}{\includegraphics{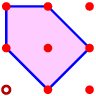}}
   \qquad\quad
   Q\ =\ \raisebox{-20pt}{\includegraphics{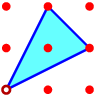}}
   \qquad\quad
   P+Q\ =\ \raisebox{-40pt}{\includegraphics{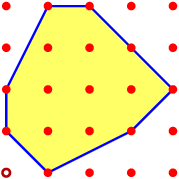}}
 \]
 \caption{As $\area(P)=5/2$, $\area(Q)=3/2$, and $\area(P+Q)=10$, the mixed area is
 $\MA(P,Q)=10-5/2-3/2=6$.}
 \label{F:MinkowskiSum}
\end{figure}
%%%%%%%%%%%%%%%%%%%%%%%%%%%%%%%%%%%%%%%%%%%%%%%%%%%%%%%%%%%%%%%%%%%%%%%%%%%%%%%%%
shows an example.
A one-dimensional lattice polytope $F$ is a line segment and its \demph{lattice length} $\ell(F)$ satisfies
\[
\ell(F)+1\ =\ \#( F\cap \ZZ\calF)\,,
\]
the 1-dimensional volume of $F$ measured with respect to the 1-dimensional lattice $\ZZ\calF$.

%%%%%%%%%%%%%%%%%%%%%%%%%%%%%%%%%%%%%%%%%%%%%%%%%%%%%%%%%%%%%%%%%%%%%%%%%%%%%%%%%%%%%%%%%%%%%%%%%%%%
\begin{Lemma}\label{L:asymptoticSing}
  Suppose that the  $V,E$ are general and $\Gamma$ is asymptotically disconnected in the direction of 
  $\eta\in\ZZ^{d+1}$ with $F$ the face of $\calN(\Phi)$ exposed by $\eta$.
  We have the trichotomy.
\begin{enumerate}[$(i)$]
  \item If $\dim F=1$, then $\ini_\eta\BV$ consists of $\ell(F)$ points.

  \item If $\dim F > 2$, then for any $1\leq i<j\leq r$ either $\Var(g_i)\cap\Var(g_j)=\emptyset$ or
    the intersection $\Var(g_i)\cap\Var(g_j)$ is a positive-dimensional set of singular points on $\ini_\eta\BV$,
    which are all asymptotic critical points.

  \item If $\dim F=2$, then the initial Bloch variety $\ini_\eta\BV$ has
    \begin{equation}\label{Eq:MA_sum}
    \sum_{1\leq i<j\leq r} \MA(\calN_i,\calN_j)
    \end{equation}
    singular points, counted with multiplicity as a subset of $\calO_\calF$.
    All are asymptotic critical points of $\BV$ in the direction $\eta$.
    Here, $\calN_i\subset\ZZ\calF$ is the Newton polytope of the factor $g_i$.
\end{enumerate}
\end{Lemma}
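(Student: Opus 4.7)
The plan is to treat all three cases via a common local description of $\ini_\eta\BV$, then distinguish by $\dim F$. By Corollary~\ref{C:Facial_Systems} the initial Bloch variety is cut out inside the orbit $\calO_\calF$ by $t^{-b}\ini_\eta\Phi$ for any $b\in\calF$, and Lemma~\ref{Lem:asymptoticallydisconnected} supplies a factorization $\ini_\eta\Phi=\mu\cdot g_1 g_2\dotsb g_r$, in which $\mu$ is a Laurent monomial (a unit on $\calO_\calF$) and each $g_i$ is the Laurent polynomial associated to the $i$th non-monomial component of $\ini_\eta\hGamma$, with Newton polytope $\calN_i\subset\ZZ\calF$. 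Hence on $\calO_\calF$ one has $\ini_\eta\BV=\bigcup_{i=1}^r \Var(g_i)$. Because the components of $\ini_\eta\hGamma$ are vertex-disjoint, the edge-weight and potential parameters appearing in different $g_i$ are disjoint subsets of $(E,V)$; for general parameters this makes each $g_i$ an independent generic Laurent polynomial with its prescribed support $\calA(g_i)$.

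Two observations drive cases (ii) and (iii). First, any $p\in \Var(g_i)\cap \Var(g_j)$ with $i\neq j$ is a singular point of $\ini_\eta\BV$: expanding $\partial_k(g_1\dotsb g_r)$ by the product rule, every summand retains at least one factor from $\{g_i,g_j\}$ and so vanishes at $p$, so every partial of $\ini_\eta\Phi$ vanishes there. This is exactly the condition for $p$ to solve the initial critical point subsystem $\ini_\eta\Psi$, so by Corollary~\ref{C:Facial_Systems} such a $p$ is an asymptotic critical point. Second, for general parameters the only singular points of $\ini_\eta\BV$ are these pairwise intersections, since each $\Var(g_i)$ is individually smooth.

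For case (i), $\calO_\calF\simeq\CC^\times$ is one-dimensional and $t^{-b}\ini_\eta\Phi$ is a Laurent polynomial in a single variable whose Newton polytope has lattice length $\ell(F)$, which has $\ell(F)$ distinct nonzero roots for general parameters. For case (ii), $\dim F>2$, so each $\Var(g_i)\subset\calO_\calF$ is a hypersurface of dimension $\dim F-1\geq 2$; whenever two such hypersurfaces meet, their intersection has dimension at least $\dim F-2\geq 1$, and by the two observations above it is a positive-dimensional locus of singular asymptotic critical points. For case (iii), $\dim F=2$ and each $\Var(g_i)$ is a curve on the 2-torus $\calO_\calF$, so Bernstein's theorem (BKK) gives $\#\bigl(\Var(g_i)\cap\Var(g_j)\bigr)=\MA(\calN_i,\calN_j)$ counted with multiplicity on $\calO_\calF$, and summing over unordered pairs yields~\eqref{Eq:MA_sum}.

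The main obstacle is the genericity bookkeeping in the setup: showing that each $\Var(g_i)$ is separately smooth and that the pairwise intersections realize the full BKK count without being corrupted by triple overlaps or by shared components. Both follow from the vertex-disjointness of the non-monomial components of $\ini_\eta\hGamma$, which ensures that the $g_i$ involve disjoint subsets of the operator parameters and therefore behave as independent general Laurent polynomials with the prescribed supports.
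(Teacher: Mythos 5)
Your argument follows the paper's proof in all essentials: factor $\ini_\eta\Phi$ via the disconnected initial graph, observe that pairwise intersections $\Var(g_i)\cap\Var(g_j)$ are singular points of $\ini_\eta\BV$ and hence asymptotic critical points, then split on $\dim F$ to get a univariate count, a dimension bound, or a BKK/mixed-area count. The one place you add value is making explicit the product-rule computation showing that a point of $\Var(g_i)\cap\Var(g_j)$ annihilates every partial of $\ini_\eta\Phi$ and therefore solves $\ini_\eta\Psi$ (via Lemma~\ref{Lem:quasihomogeneous}(iv) and Corollary~\ref{C:Facial_Systems}); the paper instead cites Proposition~\ref{P:SolutionsAtInfinity}(ii), which as stated runs in the opposite direction (solutions of $\ini_\eta\Psi$ are singular), so your direct verification is actually the cleaner route. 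Both you and the paper rely, for the equality in (iii), on each $\Var(g_i)$ being individually smooth and on triple intersections being empty for general parameters, justifying this by the disjointness of the parameters appearing in distinct factors; this is the same genericity appeal the paper makes.
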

%%%%%%%%%%%%%%%%%%%%%%%%%%%%%%%%%%%%%%%%%%%%%%%%%%%%%%%%%%%%%%%%%%%%%%%%%%%%%%%%%%%%%%%%%%%%%%%%%%%%
\begin{proof}
  Statement $(i)$ holds as $\ini_\eta\Phi$ is a univariate polynomial of degree $\ell(F)$, in the coordinates of $\calO_\calF$.
%  As each factor $g_i$ of $\ini_\eta\Phi$ has independent, general parameters, no two have a common zero.

  For $(ii)$, $\calO_\calF$ is at least 3-dimensional.
  As the factors $g_i$ are irreducible and distinct, the intersection $\Var(g_i)\cap\Var(g_j)$ is either empty or has dimension $n-2\geq 1$. 
  If non-empty, then the intersection consists of singular points of $\ini_\eta\BV$.
  Statement $(ii)$ holds by Proposition~\ref{P:SolutionsAtInfinity}$(ii)$.

  For $(iii)$, if $i\neq j$, then  $\Var(g_i)\cap\Var(g_j)$ consists of singular points, each is an asymptotic singular point
  of $\BV$ in the direction of $\eta$  and hence is an asymptotic critical point.
  If $g_i,g_j$ are general, then this consists of $\MA(\calN_i,\calN_j)$ points,  counted with multiplicity.
  Generality is ensured, as they depend on independent parameters.
  Generality also implies that all triple intersections are empty.
  This proves~\eqref{Eq:MA_sum}.
\end{proof}
%%%%%%%%%%%%%%%%%%%%%%%%%%%%%%%%%%%%%%%%%%%%%%%%%%%%%%%%%%%%%%%%%%%%%%%%%%%%%%%%%%%%%%%%%%%%%%%%%%%%
 
%%%%%%%%%%%%%%%%%%%%%%%%%%%%%%%%%%%%%%%%%%%%%%%%%%%%%%%%%%%%%%%%%%%%%%%%%%%%%%%%%%%%%%%%%%%%%%%%%%%%
\begin{Definition}\label{D:N_Disc}
  Let $F$ be a two-dimensional oblique face of $\calN(\Phi)$ such that $\Gamma$ is asymptotically disconnected
  in the direction of $\eta$, where $\eta$ exposes $F$.
  Define
  \[
     \defcolor{\Ndisc(F)}\ \vcentcolon=\ \mu_\calF [ \Sat(\calF) : \ZZ\calF]  \sum_{1\leq i<j\leq r} \MA(\calN_i,\calN_j)\,,
  \]
  the product of the sum~\eqref{Eq:MA_sum} and the multiplicity of the orbit $\calO_\calF$ in $X_\calA$.

  For all other faces $F$ set $\Ndisc(F)=0$ and let \defcolor{$\Ndisc$} be the sum of $\Ndisc(F)$ over all faces.
\end{Definition}  
%%%%%%%%%%%%%%%%%%%%%%%%%%%%%%%%%%%%%%%%%%%%%%%%%%%%%%%%%%%%%%%%%%%%%%%%%%%%%%%%%%%%%%%%%%%%%%%%%%%%

%%%%%%%%%%%%%%%%%%%%%%%%%%%%%%%%%%%%%%%%%%%%%%%%%%%%%%%%%%%%%%%%%%%%%%%%%%%%%%%%%%%%%%%%%%%%%%%%%%%%
\begin{Lemma}\label{L:N_Disc}
  With these definitions, and where $L_\Psi$ is the linear space corresponding to the critical point equations $\Psi$, we have
  \begin{equation}
    \label{Eq:atInfinity}
  \sum_{x\in{\textrm sing}(\ini_\eta\BV)}  m(x; X_\calA, L_\Psi)
  \ \geq\ \Ndisc(F)\,.
 \end{equation}
\end{Lemma}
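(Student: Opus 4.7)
The plan is a three-step reduction, building a lower bound up from a Bernstein count of mixed areas, through a local multiplicity comparison, and finally to the full toric multiplicity on $X_\calA$.

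If $\Ndisc(F)=0$ the inequality is automatic, so I may assume $F$ is a two-dimensional oblique face on which $\Gamma$ is asymptotically disconnected, with the factorization $\ini_\eta\Phi=\mu\, g_1\cdots g_r$ from Lemma~\ref{Lem:asymptoticallydisconnected}. By Lemma~\ref{L:asymptoticSing}$(iii)$, the singular locus of $\ini_\eta\BV$ in $\calO_\calF$ is $\bigcup_{i<j}\Var(g_i)\cap\Var(g_j)$, all triple intersections are empty for generic parameters, and Bernstein's theorem counts each pairwise intersection as $\MA(\calN_i,\calN_j)$ points in $\calO_\calF\simeq(\CC^\times)^2$ with their local multiplicities.

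The main step is to show, at each singular point $x\in\Var(g_i)\cap\Var(g_j)$, that $m(x;X_\calF,L_\calF)\geq \mult_x(g_i,g_j)$. Since the orbit $\calO_\calF$ is a torus, hence smooth, and is open in $X_\calF$, the point $x$ is a smooth point of $X_\calF$, and $L_\calF\cap\calO_\calF$ is cut out by $t^{-b}\ini_\eta\Psi$ by Corollary~\ref{C:Facial_Systems}. For generic parameters $g_k(x)\neq 0$ for $k\neq i,j$, so locally at $x$ we have $\ini_\eta\Phi=u\,g_i g_j$ with $u$ a unit. The product rule then gives $z_k\partial_k\ini_\eta\Phi\in(g_i,g_j)$ for every $k$, so the local ideal $(\ini_\eta\Psi)$ is contained in $(g_i,g_j)$ at $x$, which yields the desired multiplicity inequality by elementary commutative algebra (smaller ideals produce larger quotients).

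Finally, Theorem~\ref{Th:multiplicity_at_infinity}$(ii)$ boosts this from $X_\calF$ to $X_\calA$, picking up the factor $\mu_\calF\cdot[\Sat(\calF):\ZZ\calF]$; summing over singular points gives
\[
  \sum_{x\in\mathrm{sing}(\ini_\eta\BV)}  m(x;X_\calA,L_\Psi)\ \geq\ \mu_\calF\cdot[\Sat(\calF):\ZZ\calF]\sum_{i<j}\MA(\calN_i,\calN_j)\ =\ \Ndisc(F).
\]
The main obstacle is the local multiplicity comparison, specifically verifying that the critical point equations $\ini_\eta\Psi$ impose no extra constraint beyond $g_i=g_j=0$ at a generic pairwise intersection; the product-rule calculation above is the crux. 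A smaller but important issue is confirming that $X_\calF$ is smooth along $\calO_\calF$, which follows because an open subset of a smooth variety is smooth at every one of its points.
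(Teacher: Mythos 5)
Your argument is correct and follows the same high-level structure as the paper's proof: reduce to singular points of $\ini_\eta\BV$ in $\calO_\calF$, bound their multiplicity in $X_\calF$ from below by the pairwise intersection numbers, and then apply Theorem~\ref{Th:multiplicity_at_infinity}$(ii)$ to lift the bound to $X_\calA$ with the toric multiplicity factor. The paper's own proof is considerably terser---it cites Theorem~\ref{Th:multiplicity_at_infinity}$(ii)$ and asserts the resulting inequality directly, leaving implicit the step you call the crux, namely that $m(x;X_\calF,L_\calF)\geq\mult_x(g_i,g_j)$ at a singular point $x\in\Var(g_i)\cap\Var(g_j)$. Your product-rule argument (locally $\ini_\eta\Phi=u\,g_ig_j$ with $u$ a unit, so every $z_k\partial_k\ini_\eta\Phi$ lies in the ideal $(g_i,g_j)$, hence the initial critical point equations generate a smaller ideal and yield a larger local quotient) is exactly the right way to justify this step, and it uses Lemma~\ref{Lem:quasihomogeneous}$(iv)$ and Corollary~\ref{C:Facial_Systems} correctly to identify what cuts out $L_\calF\cap\calO_\calF$. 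So this is a valid and slightly more detailed rendering of the same argument rather than a genuinely different route.
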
  
%%%%%%%%%%%%%%%%%%%%%%%%%%%%%%%%%%%%%%%%%%%%%%%%%%%%%%%%%%%%%%%%%%%%%%%%%%%%%%%%%%%%%%%%%%%%%%%%%%%%
\begin{proof}
  A singular point $x\in\Var(g_i)\cap\Var(g_j)$ has multiplicity $m(x,X_\calA,L_\Psi)$ as a critical point that is at least the product of
  its multiplicity as a point of $\Var(g_i)\cap\Var(g_j)$ and the multiplicity $\mu_\calF [ \Sat(\calF) : \ZZ\calF]$ of the
  orbit $\calO_\calF$ in $X_\calA$ (This is Theorem~\ref{Th:multiplicity_at_infinity}$(ii)$).
  Since such points are a subset of the singular points of $\ini_\eta\BV$, we deduce~\eqref{Eq:atInfinity}.
\end{proof}
%%%%%%%%%%%%%%%%%%%%%%%%%%%%%%%%%%%%%%%%%%%%%%%%%%%%%%%%%%%%%%%%%%%%%%%%%%%%%%%%%%%%%%%%%%%%%%%%%%%%

Let $L_\Psi$ be the linear space corresponding to the critical point equations $\Psi$ and
define \defcolor{$\partial_{{\rm obl}}X_\calA$} to be the union of all toric subvarieties
$X_\calF$ for $F$ an oblique face of $A$.

%%%%%%%%%%%%%%%%%%%%%%%%%%%%%%%%%%%%%%%%%%%%%%%%%%%%%%%%%%%%%%%%%%%%%%%%%%%%%%%%%%%%%%%%%%%%%%%%%%%%
\begin{Corollary}\label{C:obliqueContribution}
  We have the following inequality, 
  \[
    \sum_{x\in \partial_{{\rm obl}}X_\calA}   m(x; X_\calA, L_\Psi)\ \geq\ \Ndisc\,.  
  \]
\end{Corollary}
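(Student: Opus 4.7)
The plan is to decompose $\partial_{\rm obl}X_\calA$ by torus orbits and assemble the bound one orbit at a time. By Definition-Proposition~\ref{DP:toricSets}(iii), the oblique boundary is the disjoint union
\[
\partial_{\rm obl} X_\calA\ =\ \coprod_{F\text{ oblique}} \calO_\calF\,,
\]
so I would first reorganize the left side of the corollary as
\[
\sum_{x\in\partial_{\rm obl}X_\calA} m(x;X_\calA,L_\Psi)
\ =\ \sum_{F\text{ oblique}}\ \sum_{x\in\calO_\calF\cap L_\Psi} m(x;X_\calA,L_\Psi)\,.
\]

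For a fixed oblique face $F$ exposed by $\eta$, I would identify the inner sum with the sum appearing in Lemma~\ref{L:N_Disc}. A point $x\in X_\calA$ contributes a nonzero intersection multiplicity $m(x;X_\calA,L_\Psi)$ only when $x\in L_\Psi$; on $\calO_\calF$ this means by Corollary~\ref{C:Facial_Systems} that $x$ is a zero of $t^{-b}\ini_\eta\Psi$, and by Proposition~\ref{P:SolutionsAtInfinity}(ii) every such zero is a singular point of the initial Bloch variety $\ini_\eta\BV$. Conversely, any singular point of $\ini_\eta\BV$ lying on the torus $\calO_\calF$ is automatically a zero of $\ini_\eta\Psi$. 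Hence the two index sets agree up to points of zero multiplicity, and Lemma~\ref{L:N_Disc} gives
\[
\sum_{x\in\calO_\calF\cap L_\Psi} m(x;X_\calA,L_\Psi)
\ =\ \sum_{x\in{\rm sing}(\ini_\eta\BV)} m(x;X_\calA,L_\Psi)\ \geq\ \Ndisc(F)\,.
\]

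Summing over all oblique faces completes the proof. By Definition~\ref{D:N_Disc}, $\Ndisc(F)=0$ unless $F$ is a two-dimensional oblique face in whose exposing direction $\Gamma$ is asymptotically disconnected, so the inequality is trivial on oblique faces outside this class and supplied by Lemma~\ref{L:N_Disc} on the remaining ones. Since vertical and base faces also contribute $\Ndisc(F)=0$ by definition, the partial sum over oblique faces equals the full sum $\sum_F \Ndisc(F)=\Ndisc$, and the corollary follows.

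The real work is not in this corollary but inside Lemma~\ref{L:N_Disc}, which relies on Theorem~\ref{Th:multiplicity_at_infinity}(ii) to promote an intersection multiplicity computed on the smaller toric variety $X_\calF$ into the larger ambient multiplicity on $X_\calA$ via the orbit multiplicity $\mu_\calF[\Sat(\calF):\ZZ\calF]$. Once that lemma is in hand, the proof of Corollary~\ref{C:obliqueContribution} reduces to the orbit-by-orbit accounting above, with the only subtlety being the bookkeeping identification of $\calO_\calF\cap L_\Psi$ with ${\rm sing}(\ini_\eta\BV)$.
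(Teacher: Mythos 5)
Your proof is correct and follows essentially the same route as the paper's: decompose $\partial_{\rm obl}X_\calA$ into orbits $\calO_\calF$ via Definition-Proposition~\ref{DP:toricSets}$(iii)$, regroup the sum accordingly, and apply Lemma~\ref{L:N_Disc} orbit by orbit (which itself rests on Theorem~\ref{Th:multiplicity_at_infinity}$(ii)$). The paper's proof is terser and does not spell out the identification of $\calO_\calF\cap L_\Psi$ with ${\rm sing}(\ini_\eta\BV)$ that you make explicit; that identification is sound (the forward inclusion is Proposition~\ref{P:SolutionsAtInfinity}$(ii)$, and the converse follows for oblique $\eta=(\zeta,a)$, $a\neq 0$, from the Euler relation of Lemma~\ref{Lem:quasihomogeneous}$(iii)$ together with Lemma~\ref{Lem:quasihomogeneous}$(iv)$), but note that only the converse inclusion ${\rm sing}(\ini_\eta\BV)\subset\calO_\calF\cap L_\Psi$ is actually needed to deduce the corollary's inequality from Lemma~\ref{L:N_Disc}.
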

%%%%%%%%%%%%%%%%%%%%%%%%%%%%%%%%%%%%%%%%%%%%%%%%%%%%%%%%%%%%%%%%%%%%%%%%%%%%%%%%%%%%%%%%%%%%%%%%%%%%
\begin{proof}
  Collecting the terms in the sum by the orbit they lie on gives
  \[
     \sum_{x\in \partial_{{\rm obl}}X_\calA}  m(x; X_\calA, L_\Psi)  \ =\ \sum_{F\mbox{\scriptsize\ oblique}}\ 
    \sum_{x\in\calO_\calF\cap L_\Psi}  M(x; X_\calA, L_\Psi)\,.
  \]
  The lemma follows from the inequality of Theorem~\ref{Th:multiplicity_at_infinity}$(ii)$
  and Lemma~\ref{L:N_Disc}.
\end{proof}
%%%%%%%%%%%%%%%%%%%%%%%%%%%%%%%%%%%%%%%%%%%%%%%%%%%%%%%%%%%%%%%%%%%%%%%%%%%%%%%%%%%%%%%%%%%%%%%%%%%%

%%%%%%%%%%%%%%%%%%%%%%%%%%%%%%%%%%%%%%%%%%%%%%%%%%%%%%%%%%%%%%%%%%%%%%%%%%%%%%%%%%%%%%%%%%%%%%%%%%%%
\begin{Example}\label{Ex:singularity}
  Let us now consider the facet $G$ of the polytope in Figures~\ref{F:Oblique} and~\ref{F:Oblique_Facets}.
  This is exposed by the vector $\eta=\langle -1,1,-1\rangle$, which takes value $-3$ on $G$.
  The initial graph has the same structure as in Example~\ref{Ex:hexPlus}, but with a different labeling,
\[
  \begin{picture}(158,76)(0,-6)
   \put(0,0){\includegraphics{images/iniHat_hexPlusF}}
   \put( 27,  27){$u$}        \put(87, 27){$v$}   \put(137, 27){$w$}
   \put( 15,  44){$\lambda$}  \put( 67,  46){$cy^{-1}$}   
   \put(100,  44){$\lambda$}  \put( 43,  14){$bx$}
   \put(137,  -5){$\lambda$}  \put(135,62){$fy^{-1}$}
  \end{picture}\;  \raisebox{35pt}{.}
\]
  Here is the facial form,
 \begin{equation}\label{Eq:FacialG}
  (\lambda^2\ -\ bc xy^{-1})(\lambda + f y^{-1})\,.
 \end{equation}
 As  Example~\ref{Ex:hexPlus}, the monomials $\{\lambda^3, y^{-1}\lambda^2, xy^{-1}\lambda, xy^{-2}\}$
 correspond to the vertices of the facet $G$ and form the set $\calG$. 
The span $\ZZ\calG$ of their differences is
\[
\ZZ\calG\ =\ \ZZ(1,-1,-2)\oplus\ZZ(0,-1,-1)\ =\ \ZZ(1,0,-1)\oplus\ZZ(0,-1,-1)\,.
\]
Thus 
$\calO_\calG\simeq (\CC^\times)^2$ with coordinates $x\lambda^{-1}$ and $y^{-1} \lambda^{-1}$.
Also, $X_\calG\simeq\PP^1\times\PP^1$, reflecting that $G$ is a parallelogram that is the Minkowski sum of two primitive
line segments. 
Thus the initial Bloch variety $\ini_\eta\BV=\Var(\ini_\eta\Phi)$ is the union of two coordinate lines on $X_\calG$ that intersect, and has one
singular point, $p_G$. 
As $\ZZ^3=\ZZ\calG + \ZZ(0,0,-1)$, $\ZZ\calG$ equals its saturation and $X_\calA$ is smooth along $\calO_\calG$,
which implies that the multiplicities $\mu_\calG$ and  $[\Sat(\calG) : \ZZ\calF]$ in Definition~\ref{D:N_Disc} are both 1.

The point $p_G$ has multiplicity two on the compactified Bloch variety.
We study it in $V_\calG$.
If we multiply each entry of the characteristic matrix by $\lambda^{-1}$, then each entry lies in
\[
\CC[M_\calG]\ =\ \CC[x\lambda^{-1}, x^{-1}\lambda, y^{-1}\lambda^{-1}, y\lambda][\lambda^{-1}]
           \ =\ \CC[\zeta,\zeta^{-1}, \xi, \xi^{-1}][\rho]\,,
\]
where $\defcolor{\zeta}=x\lambda^{-1}$, $\defcolor{\xi}=y^{-1}\lambda^{-1}$, and $\defcolor{\rho}=\lambda^{-1}$.
This ring is graded by the degree of the indeterminate $\rho$.
It is the coordinate ring of $V_\calG$ and $\rho=0$ defines the orbit $\calO_\calG$.
This shows that $V_\calG\simeq(\CC^\times)^2\times\CC$
Here is the characteristic matrix (in the coordinates of $\CC[M_\calG]$),
 \begin{equation}\label{Eq:CharMatrixG}
  \left(
  \begin{array}{ccc}
    1 -u \rho      & a\rho + b\zeta^{-1}\rho^2 + c\xi &    d\rho\\
     a\rho + b\zeta + c\xi^{-1}\rho^2&  1 -v \rho   & e\rho \\
      d\rho   &  e\rho    &  1-w \rho +f\xi^{-1}\rho^2 + f \xi
  \end{array}
  \right)\ .
 \end{equation}
Its determinant $\Phi$ defines the Bloch variety in $V_\calG$.
We write $\Phi$ as a polynomial in $\rho$ (suppressing terms of degree greater than 2),
 \begin{multline}\label{Eq:expandedPhi}
   (1-b\zeta\xi)(1+f\xi)\ \ -\ \ \rho\bigl(w(1-b\zeta\xi) + (1+f\xi)(u+v+ac\xi +ab\zeta) \bigr) \\
  \ +\  \rho^2(uw + vw -e^2-d^2 +b\zeta(de-cf + aw) + c\xi(aw + de) + f\xi^{-1}) \\
  \ +\     \rho^2 (1+f\xi)(uv-a^2-b^2-c^2)\ +\ \rho^3( \dotsb \qquad
 \end{multline}
The terms of degree zero in $\rho$, $(1+f\xi)(1-b\zeta\xi)$, give the initial form in these coordinates.

We use this expression~\eqref{Eq:expandedPhi} to study the point $p_G$.
Observe that in $V_\calG$, the point $p_G$ is defined by the vanishing of $\rho$, $\defcolor{\tau}\vcentcolon= 1-b\zeta\xi$,
and $\defcolor{\sigma}\vcentcolon=1+f\xi$.
These provide local coordinates for $V_\calG$ at the point $p_G$.
Similarly, the terms $w$,  $\defcolor{f}\vcentcolon=(u+v+ac\xi +ab\zeta)$, the coefficient \defcolor{$g$} of $\rho^2$ in the second line,
and $\defcolor{h}=uv-a^2-b^2-c^2$  are all non-zero at $p_G$. 
(This uses the assumption that the graph parameters $a,\dotsc,f,u,v,w$ are general.)
Let us rewrite the expression~\eqref{Eq:expandedPhi} for $\Phi$ in these local coordinates
 \begin{equation}
   \label{Eq:simplifiedPhi}
  \Phi\ =\ \tau\sigma\ -\ w\tau\rho\ -\ f\sigma\rho \ +\ g\rho^2\ + h\sigma\rho^2\ +\ \rho^3(\dotsb
 \end{equation}

This vanishes at $p_G$, recovering that $p_G$ lies on the compactified Bloch variety , $\overline{\BV}$.
As there are no terms in $\Phi$ that are linear in the local coordinates $\rho,\sigma,\tau$ for $p_G$, we see that $p_G$ is a
singular point on $\BV$, and thus its multiplicity as a critical point is at least two.
The Hessian matrix of $\Phi$ at the point $p_G$ (with coordinates in the order $(\tau,\sigma,\rho)$) is
\[
\left(\begin{array}{ccc}
  0  &   1   & -w \\
  1  &   1   & -f(p_G)\\
  -w &-f(p_G)&  g(p_G)\end{array}\right)\ .
\]
This has full rank, so the multiplicity of $p_G$ is exactly two.

For the face $F$, a similar computation shows that the critical point $p_F$ studied in Example~\ref{Ex:hexPlus} is a smooth point
of the Bloch variety, and hence its multiplicity as a critical point is 1.
Indeed, the expression in local coordinates at $p_F$ similar to~\eqref{Eq:simplifiedPhi} has the term $bdex\, \rho$, which is
linear in $\rho$ as $bdex$ is nonzero at $p_F$.
The difference in the cases for the faces $F$ and $G$ comes from the structure of the Floquet matrix and ultimately of the graph.
Unlike the analysis leading to the (dis)connectivity of the initial graph, we do not yet understand this singularity/smoothness in terms
of the graph.

Nevertheless, we do understand the critical point degree of the graph of Figure~\ref{F:Oblique}.
The Newton polytope has  normalized volume 54, and a computation at numerical values of the parameters shows that the Bloch variety has 40
critical points.
We show that the contribution of asymptotic critical points is at least 14.

There are asymptotic critical points arising from the two vertical faces, and each of the four parallelogram faces, $F$ and $G$, and
their reflections in the $\lambda$-axis.
 \begin{enumerate}
   \item Each vertical face has normalized area four, so $\Nvert\geq 8$.
   \item The face $F$ and its reflection each have $\Ndisc=1$, which accounts for at least two more.
   \item The face $G$ and its reflection each contribute two, for at least four more.
 \end{enumerate}
As $8+2+4=14$, we see that the critical point degree is at most $54-14=40$.
With the computation, this shows that the critical point degree of the graph is exactly 40.
\end{Example}
%%%%%%%%%%%%%%%%%%%%%%%%%%%%%%%%%%%%%%%%%%%%%%%%%%%%%%%%%%%%%%%%%%%%%%%%%%%%%%%%%%%%%%%%%%%%%%%%%%%%

%%%%%%%%%%%%%%%%%%%%%%%%%%%%%%%%%%%%%%%%%%%%%%%%%%%%%%%%%%%%%%%%%%%%%%%%%%%%%%%%%%%%%%%%%%%%%%%%%%%%
\def\cprime{$'$}

\providecommand{\bysame}{\leavevmode\hbox to3em{\hrulefill}\thinspace}
\providecommand{\MR}{\relax\ifhmode\unskip\space\fi MR }
% \MRhref is called by the amsart/book/proc definition of \MR.
\providecommand{\MRhref}[2]{%
  \href{http://www.ams.org/mathscinet-getitem?mr=#1}{#2}
}
\providecommand{\href}[2]{#2}


\begin{thebibliography}{10}

\bibitem{AshcroftMermin}
N.W. Ashcroft and N.D. Mermin, \emph{Solid state physics}, Cengage Learning,
  2022.

\bibitem{Baettig}
D.~B\"{a}ttig, \emph{A toroidal compactification of the {F}ermi surface for the
  discrete {S}chr\"{o}dinger operator}, Comment. Math. Helv. \textbf{67}
  (1992), no.~1, 1--16.

\bibitem{Berk}
G.~Berkolaiko, Y.~Canzani, G.~Cox, and J.L. Marzuola, \emph{A local test for
  global extrema in the dispersion relation of a periodic graph}, Pure and
  Applied Analysis \textbf{4} (2022), no.~2, 257--286.

\bibitem{BerkKuch}
G.~Berkolaiko and P.~Kuchment, \emph{Introduction to quantum graphs},
  Mathematical Surveys and Monographs, vol. 186, American Mathematical Society,
  Providence, RI, 2013.

\bibitem{BKK}
D.~N. Bernstein, A.~G. Ku\v{s}nirenko, and A.~G. Hovanski\u{\i}, \emph{Newton
  polyhedra}, Uspehi Mat. Nauk \textbf{31} (1976), no.~3(189), 201--202.

\bibitem{B46}
Garrett Birkhoff, \emph{Tres observaciones sobre el algebra lineal}, Univ. Nac.
  Tucuman, Ser. A \textbf{5} (1946), 147--154.

\bibitem{BSW}
P.~Breiding, F.~Sottile, and J.~Woodcock, \emph{Euclidean distance degree and
  mixed volume}, Found. Comput. Math. \textbf{22} (2022), no.~6, 1743--1765.

\bibitem{CdV}
Y.~Colin~de Verdi\`ere, \emph{Sur les singularit\'{e}s de van {H}ove
  g\'{e}n\'{e}riques}, Analyse globale et physique math\'{e}matique (Lyon,
  1989), no.~46, Soc. Math. France, 1991, pp.~99--110.

\bibitem{CLS}
D.~Cox, J.~Little, and H.~Schenck, \emph{Toric varieties}, Graduate Studies in
  Mathematics, vol. 124, American Mathematical Society, Providence, RI, 2011.

\bibitem{CLO}
David Cox, John Little, and Donal O'Shea, \emph{Ideals, varieties, and
  algorithms}, second ed., Undergraduate Texts in Mathematics, Springer-Verlag,
  New York, 1997, An introduction to computational algebraic geometry and
  commutative algebra.

\bibitem{DKS}
N.~Do, P.~Kuchment, and F.~Sottile, \emph{Generic properties of dispersion
  relations for discrete periodic operators}, J. Math. Phys. \textbf{61}
  (2020), no.~10, 103502, 19.

\bibitem{EH3264}
David Eisenbud and Joe Harris, \emph{3264 and all that---a second course in
  algebraic geometry}, Cambridge University Press, Cambridge, 2016.

\bibitem{E24}
A.I. \`Esterov, \emph{Engineered complete intersections: slightly degenerate
  {B}ernstein--{K}ouchnirenko--{K}hovanskii}, {\tt arXiv.org/2401.12099}, 2024.

\bibitem{FLiu}
M.~Faust and Wencai Liu, \emph{Rare flat bands for periodic graph operators},
  2025, {\tt arXiv.org/2503.03632}.

\bibitem{FL-G}
M.~Faust and J.~Lopez-Garc\'ia, \emph{Irreducibility of the dispersion
  polynomial for periodic graphs}, SIAM Journal on Applied Algebra and Geometry
  \textbf{9} (2025), 83--107.

\bibitem{FaustKachkovskiy}
Matthew Faust and Ilya Kachkovskiy, \emph{Absence of flat bands for discrete
  periodic graph operators with generic potentials}, 2025, {\tt
  arXiv/2509.01927}.

\bibitem{FLSS}
Matthew Faust, Jordy Lopez-Garc\'ia, Stephen Shipman, and Frank Sottile,
  \emph{Toric compactifications of discrete periodic operators}, In
  preparation, 2025.

\bibitem{FS}
Matthew Faust and Frank Sottile, \emph{Critical points of discrete periodic
  operators}, J. Spectr. Theory \textbf{14} (2024), 1--35.

\bibitem{FS25}
\bysame, \emph{The spectral edges conjecture via corners}, {\tt
  arXiv.org/2510.10143}, 2025.

\bibitem{FLM22}
J.~Fillman, W.~Liu, and R.~Matos, \emph{Irreducibility of the {B}loch variety
  for finite-range {S}chr{\"o}dinger operators}, J. Funct. Anal. \textbf{283}
  (2022), no.~10.

\bibitem{flm23}
Jake Fillman, Wencai Liu, and Rodrigo Matos, \emph{Algebraic properties of the
  {F}ermi variety for periodic graph operators}, J. Funct. Anal. \textbf{286}
  (2024), no.~4, 110286.

\bibitem{fk2}
Nikolay Filonov and Ilya Kachkovskiy, \emph{On spectral bands of discrete
  periodic operators}, Comm. Math. Phys. \textbf{405} (2024), no.~2, Paper No.
  21, 17. \MR{4698054}

\bibitem{Fu84a}
Wm. Fulton, \emph{Intersection theory}, Ergebnisse der Math., no.~2,
  Springer-Verlag, 1984.

\bibitem{Fu93}
\bysame, \emph{Introduction to toric varieties}, Annals of Mathematics Studies,
  vol. 131, Princeton University Press, Princeton, NJ, 1993.

\bibitem{Fu96b}
\bysame, \emph{Introduction to intersection theory in algebraic geometry}, CBMS
  54, AMS, 1996, second edition.

\bibitem{GKZ}
I.~M. Gel\cprime~fand, M.~M. Kapranov, and A.~V. Zelevinsky,
  \emph{Discriminants, resultants, and multidimensional determinants},
  Mathematics: Theory \& Applications, Birkh\"{a}user, Boston, MA, 1994.

\bibitem{GKT}
D.~Gieseker, H.~Kn\"{o}rrer, and E.~Trubowitz, \emph{The geometry of algebraic
  {F}ermi curves}, Perspectives in Mathematics, vol.~14, Academic Press, Inc.,
  Boston, MA, 1993.

\bibitem{KKS}
K.~Kaveh, A.~Khovanskii, and H.~Spink, \emph{Vector-valued laurent polynomial
  equations, toric vector bundles and matroids}, {\tt arXiv.org/2507.09793},
  2025.

\bibitem{Kittel}
C.~Kittel and P.~McEuen, \emph{Introduction to solid state physics}, John Wiley
  \& Sons, 2018.

\bibitem{KorotyaevSabruova14}
Evgeny Korotyaev and Natalia Saburova, \emph{Schr{\"o}dinger operators on
  periodic discrete graphs}, Journal of Mathematical Analysis and Applications
  \textbf{420} (2014), no.~1, 576--611.

\bibitem{Kushnirenko}
A.~G. Kouchnirenko, \emph{Poly\`edres de {N}ewton et nombres de {M}ilnor},
  Invent. Math. \textbf{32} (1976), no.~1, 1--31.

\bibitem{KuchBook}
P.~Kuchment, \emph{Floquet theory for partial differential equations}, Operator
  Theory: Advances and Applications, vol.~60, Birkh\"{a}user Verlag, Basel,
  1993.

\bibitem{KuchBAMS}
\bysame, \emph{An overview of periodic elliptic operators}, Bull. Amer. Math.
  Soc. (N.S.) \textbf{53} (2016), no.~3, 343--414.

\bibitem{LT}
Kisun Lee and Xindong Tang, \emph{On the polyhedral homotopy method for solving
  generalized {N}ash equilibrium problems of polynomials}, J. Sci. Comput.
  \textbf{95} (2023), no.~1, Paper No. 13, 26.

\bibitem{LMR}
Julia Lindberg, Leonid Monin, and Kemal Rose, \emph{The algebraic degree of
  sparse polynomial optimization}, {\tt arXiv:2308.07765}, 2023.

\bibitem{LNRW}
Julia Lindberg, Nathan Nicholson, Jose Rodriguez, and Zinan Wang, \emph{The
  maximum likelihood degree of sparse polynomial systems}, SIAM Journal on
  Applied Algebra and Geometry \textbf{7} (2023), no.~1, 159--171.

\bibitem{Liu22}
W.~Liu, \emph{Irreducibility of the {F}ermi variety for discrete periodic
  {S}chr\"{o}dinger operators and embedded eigenvalues}, Geom. Funct. Anal.
  \textbf{32} (2022), no.~1, 1--30.

\bibitem{Nov81}
S.~P. Novikov, \emph{Bloch functions in the magnetic field and vector bundles.
  {T}ypical dispersion relations and their quantum numbers}, Dokl. Akad. Nauk
  SSSR \textbf{257} (1981), no.~3, 538--543.

\bibitem{Nov83}
\bysame, \emph{Two-dimensional {S}chr\"{o}dinger operators in periodic fields},
  Current problems in mathematics, {V}ol. 23, Itogi Nauki i Tekhniki, Akad.
  Nauk SSSR, Vsesoyuz. Inst. Nauchn. i Tekhn. Inform., Moscow, 1983, pp.~3--32.

\bibitem{Sel25}
F.~Selyanin, \emph{Newton numbers, vanishing polytopes and algebraic degrees},
  {\tt arXiv.org/2507.03661}, 2025.

\bibitem{AAPGO}
Stephen Shipman and Frank Sottile, \emph{Algebraic aspects of periodic graph
  operators}, Operator Theory (Second Ed.) (Irene~Sabadini Daniel~Alpay,
  Fabrizio~Colombo, ed.), Springer, 2025.

\bibitem{IHP}
F.~Sottile, \emph{Real solutions to equations from geometry}, University
  Lecture Series, vol.~57, American Mathematical Society, Providence, RI, 2011.

\bibitem{Ibadan}
Frank Sottile, \emph{Ibadan lectures on toric varieties}, {\tt
  arXiv:1708.01842}, 2017.

\bibitem{Sunada}
Toshikazu Sunada, \emph{Topological crystallography}, Surveys and Tutorials in
  the Applied Mathematical Sciences, vol.~6, Springer, Tokyo, 2013, With a view
  towards discrete geometric analysis.

\bibitem{TT24}
Nguyen~Tat Thang and Pham~Thu Thuy, \emph{Euclidean distance degree of complete
  intersections via {N}ewton polytopes}, {\tt arXiv.org/2404.17237}, 2024.

\end{thebibliography}
\end{document}